\documentclass[preprint,12pt]{elsarticle}




\usepackage{amssymb}
\usepackage{amsmath}
\usepackage{amsthm}


\journal{J. Math. Pures Appl.}

\usepackage{mathtools}
\usepackage{stmaryrd}
\usepackage{enumitem}
\usepackage{todonotes}

\newtheorem{theorem}{Theorem}
\newtheorem{lemma}[theorem]{Lemma}
\newtheorem{proposition}[theorem]{Proposition}
\newdefinition{rmk}{Remark}

\newcommand{\bel}{\begin{equation} \label}
\newcommand{\ee}{\end{equation}}

\def\p{\partial}
\def\R{\mathbb R}
\def\o2{\overline{O_2}}

\def\V{\mathcal V}

\def\dist{\text{dist}}

\DeclareMathOperator{\supp}{supp}
\DeclareMathOperator{\Op}{Op}
\DeclareMathOperator{\scl}{scl}
\DeclareMathOperator{\Hess}{Hess}
\DeclareMathOperator{\Span}{span}

\newcommand{\proj}{\mathcal{P}} 
\newcommand{\compl}{\mathcal{Q}}
\DeclarePairedDelimiter{\norm}{\lVert}{\rVert}
\newcommand{\tnorm}[1]{\vert\hspace{-0.3mm}\Vert#1\Vert\hspace{-0.3mm}\vert}
\newcommand{\tangular}[1]{ \llbracket\kern-0.5ex|#1|\kern-0.5ex\rrbracket} 
\newcommand{\jump}[1]{\llbracket#1\rrbracket}

\providecommand{\abs}[1]{\left\lvert#1\right\rvert}

\newcommand{\dT}{\mathrm{d}t}
\newcommand{\dX}{\mathrm{d}x}
\newcommand{\dS}{\mathrm{d}S}
\newcommand{\STdom}{Q}
\newcommand{\STdata}{\omega_T}

\newcommand{\Qmin}{\tilde{Q}}
\newcommand{\FiniteDimSpace}{\mathcal{V}}

\newcommand{\FullyDiscrSpace}[2]{ W^{ {#1},{#2}}_{h } }

\newcommand{\ProdFullyDiscrSpace}[2]{ \mathcal{W}^{ {#1},{#2}}_{h } }
\providecommand{\StabCIP}[2]{ J \left( #1, #2 \right) }
\providecommand{\StabGLS}[2]{ G  \left( #1, #2 \right) }
\providecommand{\StabTikh}[2]{ J_{ \gamma }  \left( #1, #2 \right) }
\providecommand{\StabDt}[2]{ I_{ 0 }  \left( #1, #2 \right) }
\providecommand{\StabTrace}[2]{ R_{ \mathcal{Q} } \left( #1, #2 \right) }
\providecommand{\StabTimeJumps}[2]{ S^{\uparrow \downarrow}_{h}  \left( #1, #2 \right) }

\begin{document}

\begin{frontmatter}


\title{Unique continuation for the wave equation: the stability landscape\tnoteref{label1}}
\tnotetext[label1]{Funding by EPSRC grant EP/V050400/1 is gratefully acknowledged.
L.O. was supported by the European Research Council of the European Union, grant 101086697 (LoCal), and the Reseach Council of Finland, grants 347715, 353096 and 359182. Z.Z was supported by the Finnish Ministry of Education and Culture’s Pilot for Doctoral Programmes (Pilot project Mathematics of Sensing, Imaging and Modelling). Views and opinions expressed are those of the authors only and do not necessarily reflect those of the European Union or the other funding organizations.
}
\author[labela]{Erik Burman}

\ead{e.burman@ucl.ac.uk}
	\affiliation[labela]{organization={Department of Mathematics, University College London},
	     city={London},
	     country={UK}}



\author[labelb]{Lauri Oksanen} 
\ead{lauri.oksanen@helsinki.fi}
	\affiliation[labelb]{organization={Department of Mathematics and Statistics, University of Helsinki},
            city={Helsinki},
            country={Finland}}

\author[labelc]{Janosch Preuss} 
\ead{janosch.preuss@inria.fr}
	\affiliation[labelc]{organization={Project-Team Makutu, Inria Bordeaux, University of Pau and Pays de l’Adour},
            city={Pau},
            country={France}}

\author[labelb]{Ziyao Zhao} 
\ead{ziyao.zhao@helsinki.fi}

\begin{abstract}
We consider a unique continuation problem for the wave equation given data in a volumetric subset of the space time domain. In the absence of data on the lateral boundary of the space-time cylinder we prove that the solution can be continued with H\"older stability into a certain proper subset of the space-time domain. Additionally, we show that unique continuation of the solution to the entire space-time cylinder with Lipschitz stability is possible given the knowledge of a suitable finite dimensional space in which the trace of the solution on the lateral boundary is contained. These results allow us to design a finite element method that provably converges to the exact solution at a rate that mirrors the stability properties of the continuous problem.
\end{abstract}



\begin{keyword}
unique continuation \sep wave equation \sep conditional stability \sep  finite dimension \sep finite-element method, stabilization 


\end{keyword}

\end{frontmatter}



\section{Introduction}\label{section:intro}
Wave propagation problems with unknown boundary or initial data arise in a wide range of imaging applications, from medical diagnostics to seismological exploration. Such problems are computationally extremely challenging, due to the lack of stability of the reconstruction problem and the strong space-time coupling. In addition the complex stability properties of the wave equation makes it difficult to characterise exactly what accuracy a computation can be expected to have.  It is known that there is a strong interplay between the geometry of the data, and in the target domains as well as the  a priori assumptions made on the exact solution. It is therefore of interest to establish a theory for the approximation of ill-posed problems where the approximation can be shown to produce the best possible result locally under the constraint of the physical stability. The objective of the present paper is to take some steps in this direction. In particular we establish H\"older stability of the solution under a special convexity condition on the data set and global Lipschitz stability if the trace of the solution is in (or close to) some finite dimensional set. We then design a numerical method which reproduces the accuracy indicated by these stability estimates. This paper is organized as follows: in the remainder of this section we introduce the considered problem and give an overview of the results. In Section \ref{section:stab} we prove the different stability estimates. The finite element method is introduced in Section \ref{section:FEM} and an error analysis based on the stability estimates is proposed in Section \ref{section:error-analysis}. Finally the practical ramifications of the theoretical results are investigated in a series of numerical experiments in Section \ref{section:numexp}.
\subsection{Statement of the considered problem}
This article deals with an ill-posed unique continuation problem for the wave equation which is defined as follows. Let $\Omega \subset \mathbb{R}^d$, where $d \in \mathbb{N}$ be a bounded domain with smooth strictly convex boundary $\partial \Omega$. Given an interval $I_T = (-T,T)$ or $I_T = (0,T)$ with $T > 0$ we denote the space-time cylinder $\STdom := I_T \times \Omega$ and its lateral boundary as $\Sigma := I_T \times \partial \Omega$.
We consider a data assimilation problem with $L^2$-data $u_{\omega}$ given in $ \STdata := I_T \times \omega$ for a subset $\omega \subset \Omega$ which is subject to the wave equation in $ \STdom$, i.e.\
\bel{eq:PDE+data_constraint} 
 \Box u  = 0 \text{ in } \STdom, \quad  u  = u_{\omega} \text{ in }  \STdata, 
\ee
where $\Box := \p^2_t-\Delta$ is the linear wave operator. 
The objective is then to extend $u$ from the data set $\STdata$ to a subset of  $\STdom$. To give more precise statements, stability estimates are required. \par
From \cite[Remark A.5]{BFMO} the following stability estimates is known 
\bel{eq:Lipschitz-stability}
\norm{u}_{L^{\infty}([0,T];L^2(\Omega))} + \norm{\partial_t u}_{L^{2}([0,T];H^{-1}(\Omega))} 
	\lesssim  \norm{u}_{ L^2(\STdata) } + \norm{ \Box u}_{ H^{-1} ( \STdom ) } + \norm{u}_{ L^2(\Sigma)}.
\ee
Note that $\norm{u}_{ L^2(\Sigma)}$ appears on the right hand side. Hence, under the condition that additional data on the lateral boundary $\Sigma$ is available the problem enjoys Lipschitz stability. For this case a zoo of numerical methods, see e.g.\ \cite{CM15,BFO20,BFMO21,MM21,DMS23,BP24_wave,BDE25}, have been proposed. Most of them lead to error estimates that are optimal with respect to the stability of the continuous problem. We refer to \cite[Section 1.1]{BP24_wave} for a detailed discussion of the literature on this and related problems. Our stability analysis is also of interest in the context of control for wave equations. There is a rich literature on this topic, and we refer the reader to \cite{RUss71,Lions88,Robb95,FI96,BG97,MIll02,RLTT17}. Recently the geometric properties of the observability was explored in a context different to ours in \cite{DEZ25} leading to stability estimates that could be of interest for numerical approximation of the type we propose herein.\par
The case in which no data on $\Sigma$ is available, which is the subject of this article, appears to be far less understood. Our aim in this contribution is first to characterize the stability 
of this problem through the derivation of conditional stability estimates. These estimates will be given in such a form that they can subsequently be employed to drive the design of a finite element method that can be used to compute an approximate solution to problem \eqref{eq:PDE+data_constraint}.
This approximation can be shown to converge to the exact solution at a rate which reflects the inherent stability properties of the continuous problem.

\subsection{Overview of results}
We give a brief overview of the new results established in this article without going into all the technical details. In Section \ref{section:stab} we derive the following results characterizing the stability of problem \eqref{eq:PDE+data_constraint}.  
\begin{itemize}
\item If no information on the trace of $u$ on $\Sigma$ is available, then a H\"older stability result in a certain proper subset $B$ of the space-time domain holds. That is, there exist $\alpha \in (0,1)$ such that 
\begin{equation}\label{eq:Hoelder-stab-intro}
	\norm{u}_{L^2(B)} 
\lesssim
(\norm{u}_{L^2( \STdata  )} 
+ \norm{\Box u}_{H^{-1}(\Qmin)})^\alpha 
\norm{u}_{L^2(\Qmin)}^{1-\alpha}, \quad u \in H^1(\Qmin),
\end{equation}
for a certain domain $\Qmin \subset \STdom$. To derive this estimate we restrict ourselves 
to a particular geometry $\Omega$ and choose a symmetric time interval $I_T = (-T,T)$ around the origin. 
\item Given a finite dimensional subspace $\FiniteDimSpace$ of $L^2(\Sigma)$ in which the trace of the solution on $\Sigma$ is contained allows to recover Lipschitz stability in the entire space-time cylinder $\STdom$. To derive this result we require in addition to the geometric control condition \cite{BFMO, BLR88} that the support of the functions in the finite dimensional space is sufficiently broad, see assumption \eqref{eq:assum_2} for a precise statement. Then we obtain 
\begin{align}
& \norm{u}_{L^{\infty}([0,T];L^2(\Omega))} + \norm{\p_t u}_{L^{2}([0,T];H^{-1}(\Omega))} \nonumber \\[3mm] 
& \lesssim \norm{u}_{L^2(\STdata)} + \norm{\square u}_{H^{-1}(\STdom)}  
+ \norm{\compl u}_{L^2(\Sigma)}, \quad u \in  H^1(\STdom), \label{eq:Lipschitz-stab-intro}
\end{align}
 where $\compl = 1 - \proj$ with $\proj$ being the projection operator 
onto $\FiniteDimSpace$.  
\end{itemize}
In Section \ref{section:FEM} we then propose a stabilized finite element method that can be used to compute a discrete approximation $u_h$ to the true solution. Making use of the results from Section \ref{section:stab} we prove in Theorem \ref{thm:error-estimate-fully-discrete} the following convergence rates: 
\begin{itemize}
\item  Given access to noisy data $u_{\omega} + \delta$ on $\STdata$ we obtain in the setting of the H\"older stability result \eqref{eq:Hoelder-stab-intro} that 
\[
\norm{ u - u_h }_{L^2(B)} \lesssim  h^{\alpha s} \left( \norm{u}_{ H^{s+2}(\STdom)  } + h^{-s} \norm{ \delta u }_{L^2(\STdata) }  \right),
\]
where $s$ denotes the minimal polynomial degree of the shape functions in space and time and $h$ is the mesh width.
\item Given a finite dimensional subspace $\FiniteDimSpace$ satisfying \eqref{eq:assum_2} such that $u|_{\Sigma} \in \FiniteDimSpace$ we obtain
\begin{align*}
\norm{ u - u_h }_{L^{\infty}([0,T];L^2(\Omega))} & + \norm{\p_t (u - u_h) }_{L^{2}([0,T];H^{-1}(\Omega))} \\[3mm]
 & \lesssim h^{s}  \norm{u}_{ H^{s+2}(\STdom)} + \norm{ \delta u }_{L^2(\STdata) }. 
\end{align*}
\end{itemize}
These convergence rates clearly mirror the H\"older, respectively Lipschitz stability property of the continuous problem. In Section \ref{section:numexp} we present numerical experiments to validate our theoretical results and explore further aspects not covered by our theory. For example, in the setting of estimate \eqref{eq:Hoelder-stab-intro} we investigate whether the numerical solution converges also in the complement $\STdom \setminus B$. We also check the consequences of fulfilling the condition 
$u|_{\Sigma} \in \FiniteDimSpace$ only approximately which is of practical relevance in applications. 

\section{Stability results}\label{section:stab}

\subsection{Conditional H\"older stability}\label{ssection:stab-hoelder}

The Carleman estimate by H\"ormander \cite[Th. 28.2.3]{hormander1985} forms a basis of unique continuation results with conditional H\"older stability. A classical version of such a result is \cite[Th. 3.4.1]{isakov1998}. We refer to \cite{hoop2018} and \cite{stefanov2019} for geometric reformulations of this result, with applications in inverse problems. 

For simplicity, we consider here a variant with data given on a subset of the bulk rather than of the boundary. Moreover, we restrict our attention to a very concrete geometry that is a spacetime analogue of the geometry in Figure~1 of \cite{burman2019}. Similar geometry is also considered in Figure~1 of \cite{stefanov2019}. Our primary purpose is to shift the Carleman estimate in the Sobolev scale so that the resulting unique continuation estimate is suitable for the subsequent error analysis of the finite element method.  

We write $(t, x) = (t, x^1, \dots, x^d) \in \R^{1+d}$ for the time and space variables, and
    \begin{align}
\Box = \p_t^2 - \Delta.
    \end{align}
We let $0 < r < R$ and define
    \begin{align}
H = \{(x^1, \dots, x^d) \in \R^d \mid x^1 < 0 \},
\quad
\Omega = H \cap B(0, R), 
\quad 
\omega = \Omega \setminus \overline{B(0, r)}.
    \end{align}
Let $\beta > 0$, $\epsilon > 0$ and define
    \begin{align}
\psi(t,x) = |x - y|^2 - (1-\epsilon) t^2
    \end{align}
where $y = (\beta, 0, \cdots, 0) \in \R^d$.
Define, further,
    \begin{align}
\rho_0 = r^2 + \beta^2, 
\quad 
\rho_1 = (r + \beta)^2,
    \end{align}
and let 
    \begin{align}
\rho \in (\rho_0, \rho_1), 
\quad
T > \sqrt{\frac{\rho_1 - \rho}{1-\epsilon}}.
    \end{align}
We write
 \begin{align}
\mho(s) = \{ (t,x) \in (-T,T) \times \Omega \mid \psi(t,x) > s\} \label{eq:mho-def}
 \end{align}
and define 
\begin{equation}\label{eq:B-def-hoelder}
B = \mho(\rho).
\end{equation}
This set is visualized in Figure \ref{fig_geom}.
Finally, we let $\delta \in (0, \rho)$ and define $\Qmin = \mho(\delta)$.

\begin{figure}
\centering
\includegraphics[width=0.5\textwidth,trim={2.5cm 4cm 4cm 2cm},clip]{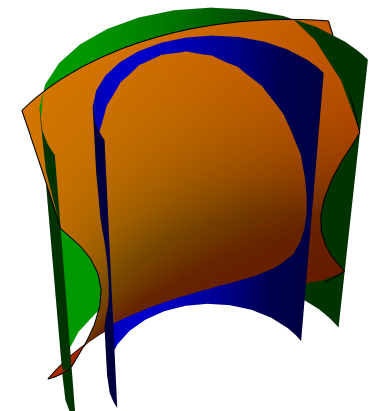}
\caption{The spacetime geometry. Time is the vertical axis. The curved parts of the boundaries of $(-T,T) \times \Omega$ and $(-T,T) \times \omega$ in green and blue, respectively.
The surface $\psi = \rho$ in orange. 
Set $B \setminus \omega$ is the region between the blue and orange surfaces. 
}
\label{fig_geom}
\end{figure}

\begin{lemma}\label{lem:stab}
There is $\alpha \in (0,1)$ such that 
\begin{align}\label{eq:Hoelder-stab}
\norm{u}_{L^2(B)} 
\lesssim
(\norm{u}_{L^2( \STdata  )} 
+ \norm{\Box u}_{H^{-1}(\Qmin)})^\alpha 
\norm{u}_{L^2(\Qmin)}^{1-\alpha}, \quad u \in H^1(\Qmin).
    \end{align}
\end{lemma}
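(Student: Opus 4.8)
The plan is to derive \eqref{eq:Hoelder-stab} from a Carleman estimate for $\Box$ whose weight is adapted to the level sets of $\psi$, followed by a localisation with cutoffs and an optimisation in the large Carleman parameter $\tau$. First I would convexify the weight, setting $\varphi = e^{\lambda\psi}$ with $\lambda>0$ large, and verify on a neighbourhood $\mathcal O$ of $\overline{\Qmin}$ that $\varphi$ has no critical points and that the surfaces $\{\psi=s\}$ are non-characteristic and strongly pseudoconvex for the symbol $-\tau^2+|\xi|^2$ of $\Box$. The quadratic $\psi(t,x)=|x-y|^2-(1-\epsilon)t^2$ is the classical weight here: the strict inequality $1-\epsilon<1$ is exactly what yields the spacelike, pseudoconvex gradient on $\Qmin$, and $\lambda$ serves to upgrade the weak pseudoconvexity of $\psi$ to the strong pseudoconvexity of $\varphi$ demanded by H\"ormander's theorem \cite[Th. 28.2.3]{hormander1985}. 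Granting this, one obtains $C,\tau_0>0$ with
\[
\tau \sum_{|\gamma|\le 1}\tau^{2(1-|\gamma|)}\int e^{2\tau\varphi}|\partial^\gamma w|^2 \le C\int e^{2\tau\varphi}|\Box w|^2, \qquad w\in C_c^\infty(\mathcal O),\ \tau\ge\tau_0.
\]

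The crux of the argument, and the step I expect to be the main obstacle, is to shift this $L^2$-based estimate down by one order in the Sobolev scale, so that the source is measured in $H^{-1}$ and $w$ only in $L^2$, schematically
\[
\tau\int e^{2\tau\varphi}|w|^2 \lesssim \norm{e^{\tau\varphi}\Box w}_{H^{-1}_\tau}^2, \qquad w\in C_c^\infty(\mathcal O),\ \tau\ge\tau_0,
\]
where $H^{-1}_\tau$ is the semiclassical norm dual to $\norm{\cdot}_{H^1_\tau}^2=\tau^2\norm{\cdot}^2+\norm{\nabla\cdot}^2$. I would establish this by conjugating the previous estimate with the order $-1$ semiclassical multiplier $\langle D/\tau\rangle^{-1}$: commuting it through the conjugated operator $e^{\tau\varphi}\Box e^{-\tau\varphi}$ produces a commutator that is one order lower in $\tau$ and is absorbed on the left for $\tau$ large. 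A more hands-on route is to decompose $\Box w = f_0+\partial_t F_0+\operatorname{div}_x F$ realising the $H^{-1}$ norm and to integrate the derivatives by parts against $e^{2\tau\varphi}w$; each integration transfers a derivative onto the weight, generating a controllable factor $\tau\nabla\varphi$, which is precisely the one-order shift. The decisive consequence is that although the commutator $[\Box,\chi]u$ with a cutoff $\chi$ is first order in $u$, its $H^{-1}$ norm is bounded by $\norm{u}_{L^2}$ on $\supp\nabla\chi$ alone, which is all the regularity available for $u$.

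With the shifted estimate in hand I would apply it to $w=\chi u$ (after first regularising $u$), with a product cutoff $\chi=\kappa(\psi)\,\theta$: here $\kappa$ is smooth and non-decreasing with $\kappa(s)=1$ for $s\ge\rho_-$ and $\kappa(s)=0$ for $s\le\delta'$ at chosen levels $\delta<\delta'<\rho_-<\rho$, while $\theta$ vanishes in a thin collar of $\partial\Omega\cup\{t=\pm T\}$ so that $w$ has compact support in $\Qmin$. The geometric hypotheses are tailored precisely so that this can be arranged with $\chi\equiv 1$ on $B$ and $\supp\nabla\chi$ split into a part inside the data cylinder $\STdata$ and a part contained in $\{\psi\le\rho''\}$ for some $\rho''<\rho$: the bound $T>\sqrt{(\rho_1-\rho)/(1-\epsilon)}$ together with $\rho\in(\rho_0,\rho_1)$ forces every point of $\overline B$ meeting $\{t=\pm T\}$ or $\partial\Omega$ to satisfy $|x|>r$, i.e.\ to lie in $\STdata$, and keeps $\overline B$ off the face $\{x^1=0\}$ where no data is prescribed. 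Expanding $\Box(\chi u)=\chi\Box u+[\Box,\chi]u$ and invoking the shift, the right-hand side is then controlled by weighted copies of $\norm{\Box u}_{H^{-1}(\Qmin)}$, of $\norm{u}_{L^2(\STdata)}$ (the part of $\nabla\chi$ meeting the data set), and of $\norm{u}_{L^2(\Qmin)}$ (the remaining part, where $\psi\le\rho''$).

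It remains to balance the weights and optimise. On $B$ one has $\varphi\ge e^{\lambda\rho}=:\mu_1$, so the left-hand side is bounded below by $C\tau e^{2\tau\mu_1}\norm{u}_{L^2(B)}^2$; the source and data terms carry a factor at most $e^{2\tau\mu_{\max}}$ with $\mu_{\max}=\max_{\overline{\Qmin}}\varphi>\mu_1$, while the no-data commutator term carries at most $e^{2\tau\mu_2}$ with $\mu_2=e^{\lambda\rho''}<\mu_1$. Writing $D=\norm{u}_{L^2(\STdata)}+\norm{\Box u}_{H^{-1}(\Qmin)}$ and $E=\norm{u}_{L^2(\Qmin)}$, this gives
\[
\norm{u}_{L^2(B)}^2 \lesssim e^{2\tau(\mu_{\max}-\mu_1)}D^2 + e^{-2\tau(\mu_1-\mu_2)}E^2, \qquad \tau\ge\tau_0.
\]
Minimising the right-hand side over $\tau$, and using the trivial bound $\norm{u}_{L^2(B)}\le E$ when the optimal $\tau$ falls below $\tau_0$, yields \eqref{eq:Hoelder-stab} with $\alpha=\tfrac{\mu_1-\mu_2}{\mu_{\max}-\mu_2}\in(0,1)$. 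A concluding density argument removes the initial smoothness assumption and delivers the estimate for all $u\in H^1(\Qmin)$.
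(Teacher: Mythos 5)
Your proposal takes essentially the same route as the paper's proof: the convexified weight $e^{\lambda\psi}$ with H\"ormander's Carleman estimate, the downward shift by one order in the semiclassical Sobolev scale via an order $-1$ multiplier (the paper's $J^{-1}$) with pseudolocality and commutator errors absorbed for small $\hbar=\tau^{-1}$, cutoffs whose gradients are confined to the data cylinder and a sublevel set $\{\psi\le s_2\}$ with $s_2<\rho$, and the final weight comparison and optimization in $\tau$ (done by hand in your version, via the cited lemma of Le Rousseau--Lebeau in the paper). The only inaccuracy is requiring $\chi\in C_0^\infty(\Qmin)$ with $\chi\equiv 1$ on all of $B$, which is impossible since $\overline{B}$ reaches the boundary of the space-time cylinder (e.g.\ near $x=(-R,0,\dots,0)$, $t=0$); but, exactly as your own geometric observation shows, it does so only where $|x|>r$, so, as in condition (i) of the paper's proof, one takes $\chi=1$ merely on $\mho(s_1)\setminus\omega_T$ for $s_1<\rho$ close to $\rho$ and bounds $\norm{u}_{L^2(B\cap\omega_T)}$ trivially by the data term.
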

\begin{proof}
The spacetime Hessian of $\psi$ has the simple form
    \begin{align}
\Hess \psi = 2 \begin{pmatrix}
-(1-\epsilon)
\\ & 1
\\ && \ddots
\\ &&& 1
\end{pmatrix}.
    \end{align}
In particular, if $X = (a, \theta) \in \R^{1+d}$ is a null vector, that is, if $|\theta| = |a|$, then 
    \begin{align}
\label{psi_conv}
\Hess \psi(X, X) = a^2(-(1-\epsilon) + 1) = a^2\epsilon > 0,
    \end{align} 
assuming that $a \ne 0$.
Moreover, in $Q$ there holds
    \begin{align}
- |\p_t \psi|^2 + |\nabla \psi|^2  = 2 \psi > 0.
    \end{align}
In particular, the spacetime differential $d\phi$ of $\psi$ is not null.
This and \eqref{psi_conv} imply that the level surfaces of $\psi$ are pseudo-convex in $\Qmin$. Setting $\phi = e^{\lambda \psi}$ for large enough $\lambda > 0$, it follows from \cite[Th. 28.2.3, Prop. 28.3.3]{hormander1985} that 
    \begin{align}
\label{carleman}
\int_{\Qmin} (\tau^3 |w|^2 + \tau |d w|^2) e^{2 \tau \phi} dx 
\lesssim
\int_{\Qmin} |\Box w|^2 e^{2 \tau \phi} dx, 
\quad
w \in C^\infty_0(\Qmin),\ \tau > 1.
    \end{align}

Let $s_1, s_2 \in (\max(\delta, \rho_0), \rho)$ satisfy 
$s_1 < s_2$ and choose 
$\chi_j \in C_0^\infty(\Qmin)$, $j=1,2$, such that, recalling the notation $\omega_T = (-T,T) \times \omega$, there holds
\begin{itemize}
\item[(i)] $\chi_1 = 1$ in $\mho(s_1) \setminus \omega_T$,
\item[(ii)] $\supp(\chi_2) \subset (\Qmin \setminus \mho(s_2)) \cup \omega_T$,
\item[(iii)] $\chi_2 = 1$ in $\supp(d\chi_1)$.
\end{itemize}
Moreover, choose $\chi_3, \chi_4 \in C_0^\infty(\Qmin)$ 
such that $\chi_{4} = 1$ in $\supp \chi_{3}$ 
and that $\chi_{3} = 1$ in $\supp \chi_{1}$.

We view $\hbar = \tau^{-1}$ as a semiclassical parameter. Then the conjugated and rescaled operator $P = e^{\phi/\hbar} \circ \hbar^2 \Box \circ e^{-\phi/\hbar}$ is a semiclassical differential operator. The Carleman estimate \eqref{carleman} can be rewritten as 
    \begin{align}
\label{carleman_scl}
\hbar \norm{\tilde w}_{H^1_{\scl}(\R^{1+d})}^2 \lesssim 
\norm{P \tilde w}_{L^2(\R^{1+d})}^2,
\quad \tilde w \in C^\infty_0(\Qmin),\ \hbar \in (0, 1),
    \end{align}
where the semiclassical norm is defined for any $s \in \R$ by
    \begin{align}
\norm{u}_{H^s_{\scl}(\R^{1+d})} = \norm{J^s u}_{L^2(\R^{1+d})},
\quad u \in H^s(\R^{1+d}),
    \end{align}
see (8.3.5) of \cite{zworski2012}.
Here
    \begin{align}
\quad J^s = \Op_1(j^s), \quad j^s(\xi) = (1 - |\xi|^2)^{s/2},
\quad \xi \in \R^{1+d},
    \end{align}
and $\Op_1$ is the standard quantization in the sense of Section~4.1 of \cite{zworski2012}. In other words, $J^s$ is the semiclassical pseudodifferential operator corresponding to the symbol $j^s$.

For $v \in H^1(\R^{1+d})$ there holds
    \begin{align}
\norm{\chi_1 v}_{L^2(\R^{1+d})} 
= 
\norm{\chi_4 J^{-1} (\chi_1 v)}_{H^1_{\scl}(\R^{1+d})}
+
\norm{(1-\chi_4) J^{-1} (\chi_3 \chi_1 v)}_{H^1_{\scl}(\R^{1+d})},
    \end{align}
and the pseudolocality
    \begin{align}
\norm{(1-\chi_4) J^{-1} (\chi_3 u)}_{H^1_{\scl}(\R^{1+d})}
\lesssim \hbar \norm{u}_{L^2(\R^{1+d})}, 
\quad u \in L^2(\R^{1+d}),
    \end{align}
see e.g. (4.8) of 
\cite{dos-santos-ferreira2009}, implies for small enough $\hbar > 0$
    \begin{align}
\norm{\chi_1 v}_{L^2(\R^{1+d})} 
\lesssim 
\norm{\chi_4 J^{-1} (\chi_1 v)}_{H^1_{\scl}(\R^{1+d})}.
    \end{align}
We apply \eqref{carleman_scl} to $\tilde w = \chi_4 J^{-1} (\chi_1 v)$ and obtain 
    \begin{align}
\sqrt{\hbar} \norm{\chi_1 v}_{L^2(\R^{1+d})} 
\lesssim 
\norm{P (\chi_4 J^{-1} (\chi_1 v))}_{L^2(\R^{1+d})}.
    \end{align}
The commutator estimate
    \begin{align}
\norm{[P,\chi_4 J^{-1}] u}_{L^2(\R^{1+d})} 
\lesssim
\hbar \norm{u}_{L^2(\R^{1+d})}, 
\quad
u \in L^2(\R^{1+d}),
    \end{align}
see e.g. (12) of \cite{burman2019} implies for small enough $\hbar > 0$
    \begin{align}
\sqrt{\hbar} \norm{\chi_1 v}_{L^2(\R^{1+d})} 
\lesssim 
\norm{\chi_4 J^{-1} P (\chi_1 v))}_{L^2(\R^{1+d})}
\lesssim 
\norm{P (\chi_1 v)}_{H^{-1}_{\scl}(\R^{1+d})}.
    \end{align}
Using (iii) we have 
    \begin{align}
\sqrt{\hbar} \norm{\chi_1 v}_{L^2(\R^{1+d})} 
\lesssim 
\norm{\chi_1 P v}_{H^{-1}_{\scl}(\R^{1+d})}
+
\norm{[P, \chi_1] (\chi_2 v)}_{H^{-1}_{\scl}(\R^{1+d})},
    \end{align}
and the commutator estimate
    \begin{align}
\norm{[P, \chi_1] u}_{H^{-1}_{\scl}(\R^{1+d})} 
\lesssim
\hbar \norm{u}_{L^2(\R^{1+d})}, 
\quad
u \in L^2(\R^{1+d}),
    \end{align}
gives
    \begin{align}
\sqrt{\hbar} \norm{\chi_1 v}_{L^2(\R^{1+d})} 
\lesssim 
\norm{\chi_1 P v}_{H^{-1}_{\scl}(\R^{1+d})}
+
\hbar \norm{\chi_2 v}_{L^2(\R^{1+d})}.
    \end{align}
Letting $u \in H^1(\R^{1+d})$ and taking $v = e^{\phi / \hbar} u$,
we have
    \begin{align}
\sqrt{\hbar} \norm{e^{\phi / \hbar}\chi_1 u}_{L^2(\R^{1+d})} 
\lesssim 
\norm{e^{\phi / \hbar} \chi_1 \hbar^2 \Box u}_{H^{-1}_{\scl}(\R^{1+d})}
+
\hbar \norm{e^{\phi / \hbar} \chi_2 u}_{L^2(\R^{1+d})}.
    \end{align}

We pass back to the notation $\tau = \hbar^{-1}$
and have for large $c > 0$ that 
    \begin{align}
\norm{e^{\phi / \hbar} \chi_1 \hbar^2 \Box u}_{H^{-1}_{\scl}(\R^{1+d})}
\le e^{c\tau} \norm{\Box u}_{H^{-1}(\Qmin)}.
    \end{align}
Moreover, using (ii)
    \begin{align}
\hbar \norm{e^{\phi / \hbar} \chi_2 u}_{L^2(\R^{1+d})}
\le 
e^{\tau e^{\lambda s_2}} \norm{u}_{L^2(\Qmin)}.
    \end{align}
Recall that $\phi > \rho$ on $B$. Hence it follows from (i)
that 
    \begin{align}
e^{\tau e^{\lambda \rho}} \norm{u}_{L^2(B \setminus \omega_T)}
\lesssim 
\sqrt{\hbar} \norm{e^{\phi / \hbar}\chi_1 u}_{L^2(\R^{1+d})}.
    \end{align}
Therefore, writing
$\kappa = e^{\lambda \rho} - e^{\lambda s_2} > 0$, we have
    \begin{align}
\norm{u}_{L^2(B)}^2
\lesssim
e^{2 c \tau}(\norm{\Box u}_{H^{-1}(\Qmin)}^2 
+ \norm{u}_{H^1(\omega_T)}^2) 
+ e^{-2 \tau \kappa} \norm{u}_{L^2(\Qmin)}^2.
    \end{align}
The claim follows from \cite[Lem. 5.2]{le-rousseau2012}.
\end{proof}

\subsection{Finite dimensional trace}
In this section, we consider the stability for the unique continuation of the wave equation with a priori knowledge that the trace lies in a finite dimensional space. 
 The equation is formulated as
  \begin{equation}
    \label{eq:problem}
    \begin{cases}
      \square u = f, \text{ in }[0,T]\times \Omega,\\
      u|_{[0,T]\times\p\Omega}\in \FiniteDimSpace,\\
      u|_{[0,T]\times \omega}=q,
    \end{cases}
  \end{equation}
  where $\FiniteDimSpace \subset L^2([0,T]\times \p\Omega)$, is a finite dimensional spaces. We denote the projection operator on $\FiniteDimSpace$ by $\proj$, and the corresponding complement operator is written as $\compl$.\par
  We will assume that $(0,T)\times\omega$ satisfies the geometric control condition in $(0,T)\times \Omega$. The condition is, roughly speaking, every line with lightlike tangent vector, reflecting at $(0,T)\times \p \Omega$ according to Snell's law, intersects $(0,T)\times \omega$. (A vector $v=(v^0,v^1)\in \mathbb{R}^{1+n}$ is lightlike if $\abs{v^0}=\abs{v^1}$.) We refer to \cite{BLR} for the precise definition.\par
  Next we introduce the set
  \begin{equation}
    \label{eq:def_gamma}
    \Gamma:=\left\{(t,x)\in [0,T]\times \p\Omega\mid \dist(x,\omega)\leq \frac{T}{2}-\abs{t-\frac{T}{2}}\right\}.
  \end{equation}
  We claim that the geometric control condition implies that $\Gamma^\circ$ is not empty. To show this, we firstly verify that $\dist(\p\Omega,\overline{\omega})<\frac{T}{2}$. For simplicity, we denote $\dist(\p\Omega,\overline{\omega})$ by $s$ and we assume that $s>0$ without loss of generality. Since $\p\Omega$ and $\overline{\omega}$ are compact, we can find $x_0\in \p\Omega$ and $y_0\in \p\omega$ such that
  \begin{equation}
    \label{eq:minimal_distance_line}
    d(x_0,y_0)=\min_{x\in \p\Omega,\ y\in \overline{\omega}}d(x,y)=s.
  \end{equation}
  Let $\gamma(t)$ be a unit speed straight line that joins $y_0$ and $x_0$ with $\gamma(0)=y_0$ and $\gamma(s)=x_0$. Notice that $\gamma((0,s))$ lies in $\Omega\setminus \overline{\omega}$ since $\gamma(t)\not\in \p\Omega\cup\overline{\omega}$ for all $0<t<s$, otherwise it contradicts \eqref{eq:minimal_distance_line}. Since $\gamma$ realizes the minimal distance from $y_0$ to $\p\Omega$, $\gamma^\prime(s)$ is normal to $\p\Omega$ at $x_0$. Then the reflection of $\gamma$ at the boundary is the curve
  \begin{equation}
    \widetilde{\gamma}(t):=\begin{cases}
      \gamma(t),\ 0<t\leq s,\\
      \gamma(s-t),\ s<t\leq 2s.
    \end{cases}
  \end{equation}
    Since $\widetilde{\gamma}([0,2s])$ does not intersect with $\omega$, the geometric control condition reads that $T>2s$. Choose $0<\varepsilon<\frac{1}{6}(T-2s)$. Hence we have for any $y\in B(y_0,\varepsilon)\cap \omega$, $x\in B(x_0,\varepsilon)\cap \p\Omega$ and $\frac{T}{2}-\varepsilon<t<\frac{T}{2}+\varepsilon$, there holds
  \begin{equation}
    d(x,y)\leq d(x,y_0)+d(y_0,x_0)+ d(x_0,x)=s+2\varepsilon=\frac{T}{2}-\varepsilon\leq \frac{T}{2}-\abs{t-\frac{T}{2}}.
  \end{equation}
  This proves our claim.
   
    We assume that
  \begin{align}
    \tag{A1}
    \label{eq:assum_2}
    \forall \varphi\in \FiniteDimSpace,\ \text{if }\varphi|_\Gamma=0,\ \text{then }\varphi=0.
  \end{align}
  Then we have the following Lipschitz stability.
  \begin{theorem}\label{thm:lipschitz-stab-fin}
    Let $\FiniteDimSpace$ be a finite dimensional subspace of $L^2([0,T]\times \p\Omega)$ that satisfies \eqref{eq:assum_2}. Then there holds
	  \begin{align}
\norm{u}_{L^{\infty}([0,T];L^2(\Omega))}+\norm{\p_t u}_{L^{2}([0,T];H^{-1}(\Omega))}&\lesssim \norm{\square u}_{H^{-1}([0,T]\times\Omega)}+\norm{u}_{L^2([0,T]\times\omega)} \nonumber \\
      &+\norm{\compl u}_{L^2([0,T]\times\p\Omega)}.  \label{eq:stab-finite-dim-trace}
    \end{align} 
  \end{theorem}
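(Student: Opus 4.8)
We start from the known Lipschitz estimate \eqref{eq:Lipschitz-stability}: under the geometric control condition assumed throughout this subsection it bounds the left-hand side of \eqref{eq:stab-finite-dim-trace} by $\norm{\square u}_{H^{-1}([0,T]\times\Omega)} + \norm{u}_{L^2([0,T]\times\omega)} + \norm{u}_{L^2([0,T]\times\p\Omega)}$, the only difference from the desired estimate being the appearance of the \emph{full} trace norm $\norm{u}_{L^2([0,T]\times\p\Omega)}$ in place of $\norm{\compl u}_{L^2([0,T]\times\p\Omega)}$. Splitting $u = \proj u + \compl u$ on $\Sigma := [0,T]\times\p\Omega$ and noting that the complementary part already sits on the right-hand side, the whole proof reduces to controlling $\norm{\proj u}_{L^2(\Sigma)}$ by the data terms $\norm{u}_{L^2([0,T]\times\omega)}$, $\norm{\square u}_{H^{-1}([0,T]\times\Omega)}$ and $\norm{\compl u}_{L^2(\Sigma)}$.

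Here I would exploit finite dimensionality. Since $\proj u \in \FiniteDimSpace$ and $\FiniteDimSpace$ is finite dimensional, assumption \eqref{eq:assum_2} says exactly that the restriction map $\varphi \mapsto \varphi|_\Gamma$ is injective on $\FiniteDimSpace$; hence $\norm{\cdot}_{L^2(\Gamma)}$ is a norm on $\FiniteDimSpace$, and by equivalence of all norms on a finite dimensional space $\norm{\proj u}_{L^2(\Sigma)} \lesssim \norm{\proj u}_{L^2(\Gamma)}$, with a constant depending only on $\FiniteDimSpace$ and $\Gamma$ (recall $\Gamma^\circ \neq \emptyset$, established above). Writing $\proj u = u - \compl u$ on $\Sigma$ and using $\Gamma \subset \Sigma$ gives $\norm{\proj u}_{L^2(\Gamma)} \le \norm{u}_{L^2(\Gamma)} + \norm{\compl u}_{L^2(\Sigma)}$, so the matter is reduced to estimating the genuine trace $\norm{u}_{L^2(\Gamma)}$ by the data alone.

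The heart of the argument is then a causal, domain-of-dependence estimate for this trace: $\norm{u}_{L^2(\Gamma)} \lesssim \norm{u}_{L^2([0,T]\times\omega)} + \norm{\square u}_{H^{-1}([0,T]\times\Omega)}$. The definition of $\Gamma$ rewrites as $\dist(x,\omega) \le t$ and $\dist(x,\omega) \le T-t$ simultaneously, so that $\Gamma$ is precisely the trace on $[0,T]\times\p\Omega$ of the region $\{(t,x) : \dist(x,\omega) < \min(t,T-t)\}$ that is causally determined by the solid data cylinder $[0,T]\times\omega$ through finite speed of propagation. From every point of $\Gamma$ both the backward and the forward light cone of aperture $\dist(x,\omega)$ stay inside the time slab $[0,T]$ and reach into $[0,T]\times\omega$, where the observation is complete; this two-sided causal connection is what delivers Lipschitz, rather than merely Hölder, dependence on the data. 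I would establish it by an energy/multiplier estimate propagating outward from $\omega$, losing a slice of time proportional to the spatial distance travelled, while carrying the source term in the $H^{-1}$ norm so that the conclusion lands on the Sobolev scale required by the subsequent finite element analysis.

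Combining the three steps yields $\norm{u}_{L^2(\Sigma)} \lesssim \norm{u}_{L^2([0,T]\times\omega)} + \norm{\square u}_{H^{-1}([0,T]\times\Omega)} + \norm{\compl u}_{L^2(\Sigma)}$, and substituting this into \eqref{eq:Lipschitz-stability} gives \eqref{eq:stab-finite-dim-trace}. The main obstacle is the trace estimate on $\Gamma$: one must show that the causal structure encoded in the definition of $\Gamma$ genuinely upgrades the generically ill-posed continuation from $[0,T]\times\omega$ to a Lipschitz-stable one, and carry this out in the $H^{-1}$ setting up to the lateral boundary $[0,T]\times\p\Omega$, where the cones meet $\p\Omega$ and no boundary condition is at our disposal.
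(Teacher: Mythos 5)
The outer layers of your argument are fine: the reduction via \eqref{eq:Lipschitz-stability}, the splitting $u=\proj u+\compl u$ on $\Sigma$, and the finite-dimensional norm equivalence $\norm{\proj u}_{L^2(\Sigma)}\lesssim\norm{\proj u}_{L^2(\Gamma)}$ (valid by \eqref{eq:assum_2} since all norms on $\FiniteDimSpace$ are equivalent). But you have pushed the entire difficulty into the claimed trace estimate $\norm{u}_{L^2(\Gamma)}\lesssim\norm{u}_{L^2([0,T]\times\omega)}+\norm{\square u}_{H^{-1}([0,T]\times\Omega)}$, and this estimate is false. It asserts an \emph{unconditional, Lipschitz-stable} unique continuation bound from interior data up to the lateral boundary, with a constant independent of $\FiniteDimSpace$. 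The two-sided cone condition built into $\Gamma$ yields \emph{uniqueness} (Tataru/Holmgren--John type results), not Lipschitz stability; there is no energy or multiplier identity that propagates control sideways from the cylinder $[0,T]\times\omega$ to $\p\Omega$, because one has no Cauchy data on any spacelike surface --- continuation of $u$ in a spacelike direction from knowledge of $u$ alone on an interior set is precisely the ill-posed problem this paper is about, whose stability is at best H\"older strictly inside the uniqueness region (Lemma \ref{lem:stab}, and the noise experiments of Section \ref{sssection:numexp-hoelder-noise} showing $\alpha<1$) and degenerates to logarithmic near $\Sigma$ (Figure \ref{Cylinder-Hoelder-conv-log}). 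A concrete counterexample to your inequality: take boundary data $f_\epsilon$ of unit $L^2$ norm concentrated in an $\epsilon$-neighbourhood of a point of $\overline{\Gamma}\cap\overline{\Upsilon_+}$ (where $\dist(x,\omega)=T-t$), with half of its mass inside $\Gamma$, and let $u_\epsilon$ solve $\square u_\epsilon=0$ with zero initial data and trace $f_\epsilon$. Then $\norm{u_\epsilon}_{L^2(\Gamma)}\gtrsim 1$, while by finite speed of propagation $u_\epsilon$ can reach $[0,T]\times\omega$ only in a causal sliver of size $O(\epsilon)$ near $t=T$, so $\norm{u_\epsilon}_{L^2([0,T]\times\omega)}\to 0$ as $\epsilon\to 0$; the right-hand side of your estimate vanishes while the left-hand side does not.

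The paper avoids any quantitative unique continuation estimate altogether; the Lipschitz constant comes from finite-dimensional functional analysis, not from causal propagation. It decomposes $u=v+w+\psi$, where $v$ carries the source $\square u$ and the boundary data $\compl u$ (energy estimate), $w$ carries the initial data with \emph{zero} boundary data (so that observability under the geometric control condition legitimately applies), and $\psi$ carries the boundary data $\proj u\in\FiniteDimSpace$. For $\psi$ it uses only \emph{qualitative} unique continuation plus \eqref{eq:assum_2} to show that the map $A$ sending $g\in\FiniteDimSpace$ to the corresponding solution restricted to $[0,T]\times\omega$ is injective, hence boundedly invertible because $\dim\FiniteDimSpace<\infty$; a second finite-dimensional argument ($R(A)\cap R(B)=\{0\}$, with $R(B)$ the closed range of the initial-data-to-observation map) separates $w$ from $\psi$ given only their sum on the data set. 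This is why the constant in \eqref{eq:stab-finite-dim-trace} depends on $\FiniteDimSpace$ in an uncontrolled way (compare the exponential growth of $C_M$ in Section \ref{ssection:numexp-trace}); any proof along your lines, in which the crucial estimate is independent of $\FiniteDimSpace$, cannot work. To repair your argument you would have to inject the finite dimensionality into the continuation step itself, which is essentially what the paper's operators $A$ and $B$ accomplish.
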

  \begin{proof}
    We split $u=v+w+\psi$, where
    \begin{equation}
      \begin{cases}
        \square v = \square u, \text{ in }[0,T]\times \Omega,\\
        v|_{[0,T]\times\p\Omega}=\compl u,\\
        v|_{t=0}=0,\ \p_t v|_{t=0}=0,
      \end{cases}
    \end{equation}
    \begin{equation}
      \begin{cases}
        \square w = 0, \text{ in }[0,T]\times \Omega,\\
        w|_{[0,T]\times\p\Omega}=0,\\
        w|_{t=0}=u_{t=0},\ \p_t w|_{t=0}=\p_t u|_{t=0},
      \end{cases}
    \end{equation}
    and 
    \begin{equation}
      \begin{cases}
        \square \psi = 0, \text{ in }[0,T]\times \Omega,\\
        \psi|_{[0,T]\times\p\Omega}=\proj u,\\
        \psi|_{t=0}=0,\ \p_t \psi|_{t=0}=0.
      \end{cases}
    \end{equation}
    The energy estimate \cite[Proposition A.1]{BFMO} for $v$ reads
    \begin{equation}
      \label{eq:estimate_v}
      \norm{v}_{L^\infty([0,T];L^2(\Omega))}+\norm{\p_t v}_{L^2([0,T];H^{-1}(\Omega))}\lesssim \norm{\square u}_{H^{-1}([0,T]\times \Omega)}+\norm{\compl u}_{L^2([0,T]\times\p \Omega)}.
    \end{equation}
    It follows from the geometry control condition and the observability \cite[Theorem A.4]{BFMO} that 
    \begin{align}
      \label{eq:estimate_initial}
      \norm{w}_{L^\infty([0,T];L^2(\Omega))}+\norm{\p_t w}_{L^2([0,T];H^{-1}(\Omega))}&\lesssim \norm{u|_{t=0}}_{L^2(\Omega)}+\norm{\p_t u|_{t=0}}_{H^{-1}(\Omega)}\\
      &\lesssim \norm{w}_{L^2([0,T]\times \omega)}. \nonumber
    \end{align}
    Next we introduce the operator
    \begin{align}
      A:\FiniteDimSpace &\to L^2([0,T]\times \omega),\\
      A(g)&=\varphi|_{[0,T]\times \omega},
    \end{align}
    where $\varphi$ solves
    \begin{equation}
      \begin{cases}
        \square \varphi = 0, \text{ in }[0,T]\times \Omega,\\
        \varphi|_{[0,T]\times\p\Omega}=g,\\
        \varphi|_{t=0}=0,\ \p_t \varphi|_{t=0}=0.
      \end{cases}
    \end{equation}
    According to the unique continuation for wave equation and the assumption \eqref{eq:assum_2}, $A$ is an injective map from a finite dimensional space. Hence there is a norm such that $A$ is an isometry. Since all norms on a finite dimensional space are equivalent, we have
    \begin{equation}
      \norm{g}_{L^2([0,T]\times \p\Omega)}\lesssim \norm{A(g)}_{L^2([0,T]\times \omega)}.
    \end{equation}
    Notice $A(\proj u)=\psi|_\omega$, then the above inequality and the energy estimate for $\psi$ yields
    \begin{equation}
      \label{eq:estimate_P}
      \norm{\psi}_{L^\infty([0,T];L^2(\Omega))}+\norm{\p_t \psi}_{L^2([0,T];H^{-1}(\Omega))}\lesssim \norm{\proj u}_{L^2([0,T]\times \p\Omega)}\lesssim \norm{\psi}_{L^2([0,T]\times\omega)}.
    \end{equation}
    We denote the range of $A$ by $R(A)$. Then we consider the operator 
    \begin{align}
      B:L^2(\Omega)\times H^{-1}(\Omega)&\to L^2([0,T]\times \omega),\\
      B(\phi_0,\phi_1)&=\rho|_{[0,T]\times\omega},
    \end{align}
    where $\rho$ solves 
    \begin{equation}
      \begin{cases}
        \square \rho = 0, \text{ in }[0,T]\times \Omega,\\
        \rho|_{[0,T]\times\p\Omega}=0,\\
        \rho|_{t=0}=\phi_0,\ \p_t \rho|_{t=0}=\phi_1.
      \end{cases}
    \end{equation}
    According the observability \cite[Theorem A.4]{BFMO}, there holds
    \begin{equation}
      \norm{\phi_0}_{L^2(\Omega)}+\norm{\phi_1}_{H^{-1}(\Omega)}\lesssim \norm{\rho}_{L^2([0,T]\times\omega)}.
    \end{equation}
    Furthermore, the regularity estimate for the wave equation reads
    \begin{equation}
      \norm{\rho}_{L^2([0,T]\times\omega)}\lesssim \norm{\phi_0}_{L^2(\Omega)}+\norm{\phi_1}_{H^{-1}(\Omega)}.
    \end{equation}
    Thus the range of $B$, denoted by $R(B)$ is a closed subspace of $L^2([0,T]\times \omega)$. \par
    Let us show that $R(A)\cap R(B)=\{0\}$. Suppose $\theta \in R(A)\cap R(B)$, then there exists $g\in \V$ and $(\phi_0,\phi_1)\in L^2(\Omega)\times H^{-1}(\Omega)$ such that $A(g)=\theta=B(\phi_0,\phi_1)$. 
    That means there are $\varphi_1$, $\varphi_2$ satisfying
    \begin{equation}
      \begin{cases}
        \square \varphi_1 = 0, \text{ in }[0,T]\times \Omega,\\
        \varphi_1|_{[0,T]\times\p\Omega}=g,\\
        \varphi_1|_{t=0}=0,\ \p_t \varphi_1|_{t=0}=0,
      \end{cases}
    \end{equation}
    and 
    \begin{equation}
      \begin{cases}
        \square \varphi_2 = 0, \text{ in }[0,T]\times \Omega,\\
        \varphi_2|_{[0,T]\times\p\Omega}=0,\\
        \varphi_2|_{t=0}=\phi_0,\ \p_t \varphi_2|_{t=0}=\phi_1,
      \end{cases}
    \end{equation}
    respectively, and $\varphi_1|_{[0,T]\times\omega}=A(g)=\theta=B(\phi_0,\phi_1)=\varphi_2|_{[0,T]\times \omega}$.\par
    Then $\varphi=\varphi_1-\varphi_2$ satisfies
    \begin{equation}
      \begin{cases}
        \square \varphi = 0, \text{ in }[0,T]\times \Omega,\\
        \varphi|_{[0,T]\times\p\Omega}=g,\\
        \varphi|_{t=0}=-\phi_0,\ \p_t \varphi|_{t=0}=-\phi_1,
      \end{cases}
    \end{equation}
    with $\varphi|_{[0,T]\times\omega}=0$ by definition. Then according to the unique continuation, there holds $g|_{\Gamma}=0$, and the assumption \eqref{eq:assum_2} gives $g=0$. Hence $\theta=A(g)=0$. \par
    We write $\proj_{B}$ for the projection operator on $R(B)$ and $\compl_B=1-\proj_{B}$ the corresponding complement operator. Then we observe that the restriction of $\compl_B$ on $R(A)$ is injective. To see this, for any $\varphi\in R(A)$ with $\compl_B \varphi=0$, we have $\varphi\in R(B)$. Since $R(A)\cap R(B)=\{0\}$, we can obtain $\varphi=0$. Thus $\compl_B|_{R(A)}$ is an injective map from a finite dimensional space, and there exists a norm such that $\compl_B|_{R(A)}$ is an isometry. Then there holds
    \begin{equation}
      \norm{\varphi}_{L^2([0,T]\times \omega)}\lesssim \norm{\compl_B \varphi}_{L^2([0,T]\times\omega)},\ \varphi\in R(A).
    \end{equation}
    Recall that $\psi|_{[0,T]\times\omega}\in R(A)$ and $\compl_B w=0$ as $w|_{[0,T]\times\omega}\in R(B)$. We have
    \begin{equation}\label{eq:psileqwpluspsi}
      \norm{\psi}_{L^2([0,T]\times \omega)}\lesssim \norm{\compl_B \psi}_{L^2([0,T]\times \omega)}=\norm{\compl_B (\psi+w)}_{L^2([0,T]\times \omega)}\leq \norm{w+\psi}_{L^2([0,T]\times \omega)},
    \end{equation}
    and 
    \begin{equation}\label{eq:wleqwpluspsi}
      \norm{w}_{L^2([0,T]\times \omega)}\leq \norm{w+\psi}_{L^2([0,T]\times \omega)}+\norm{\psi}_{L^2([0,T]\times \omega)}\lesssim \norm{w+\psi}_{L^2([0,T]\times \omega)}.
    \end{equation}
   Using \eqref{eq:psileqwpluspsi}, \eqref{eq:wleqwpluspsi}, the relation $w+\psi = u-v$ and \eqref{eq:estimate_v} we see that
    \begin{multline}\label{eq:wpsilequ}
    \norm{w}_{L^2([0,T]\times\omega)}+\norm{\psi}_{L^2([0,T]\times\omega)} \lesssim \norm{w+\psi}_{L^2([0,T]\times\omega)} 
\leq \norm{u}_{L^2([0,T]\times\omega)}+\norm{v}_{L^2([0,T]\times\omega)}\\ 
\lesssim \norm{u}_{L^2([0,T]\times\omega)}+\norm{\square u}_{H^{-1}([0,T]\times\Omega)}+\norm{\compl u}_{L^2([0,T]\times\p\Omega)}.
    \end{multline}
     Combining now \eqref{eq:estimate_v}, \eqref{eq:estimate_initial}, \eqref{eq:estimate_P} and \eqref{eq:wpsilequ} we conclude that
    \begin{equation}
      \begin{split}
        &\norm{u}_{L^\infty([0,T];L^2(\Omega))}+\norm{\p_t u}_{L^2([0,T];H^{-1}(\Omega))}\lesssim \norm{v}_{L^\infty([0,T];L^2(\Omega))}+\norm{\p_t v}_{L^2([0,T];H^{-1}(\Omega))}\\
      &+\norm{w}_{L^\infty([0,T];L^2(\Omega))}+\norm{\p_t w}_{L^2([0,T];H^{-1}(\Omega))}+\norm{\psi}_{L^\infty([0,T];L^2(\Omega))}+\norm{\p_t \psi}_{L^2([0,T];H^{-1}(\Omega))}\\
      &\lesssim \norm{\square u}_{H^{-1}([0,T]\times\Omega)}+\norm{\compl u}_{L^2([0,T]\times\p\Omega)}+\norm{w}_{L^2([0,T]\times\omega)}+\norm{\psi}_{L^2([0,T]\times\omega)}\\
      &\lesssim \norm{\square u}_{H^{-1}([0,T]\times\Omega)}+\norm{\compl u}_{L^2([0,T]\times\p\Omega)}+\norm{u}_{L^2([0,T]\times\omega)}.
      \end{split}
    \end{equation}
    This proves the claim.
  \end{proof}
   We remark that assumption \eqref{eq:assum_2} is necessary for Theorem \ref{thm:lipschitz-stab-fin}. Indeed, we show that if there is a non-zero $f\in \FiniteDimSpace\subset L^2([0,T]\times \p\Omega)$ with $f|_\Gamma = 0$, then we can find a nonzero $u$ such that the right-hand side of \eqref{eq:stab-finite-dim-trace} vanishes.
    
    We denote $[0,T]\times \p\Omega \setminus\Gamma$ by $\Upsilon$ and we decompose $\Upsilon= \Upsilon_-\cup \Upsilon_+$, where 
    \begin{align}
      \Upsilon_+ & := \{(t,x)\in [0,T]\times\p\Omega\mid T/2\leq t\leq T,\ \dist(x,\omega)> T-t\},\\
      \Upsilon_- & := \{(t,x)\in [0,T]\times\p\Omega\mid 0\leq t< T/2,\ \dist(x,\omega)> t\}.
    \end{align}
    We write $f_+ = f|_{\Upsilon_+}$ and $f_- = f|_{\Upsilon_-}$. Let $u_+$ satisfy
    \begin{equation}
      \begin{cases}
      \square u_+(t,x) = 0, \text{ in }[0,T]\times \Omega,\\
      u_+(t,\cdot)|_{[0,T]\times\p\Omega}=f_+(t),\\
      u_+(0,\cdot)=0,\ \p_t u_+(0,\cdot) = 0.
    \end{cases}
    \end{equation}
    According to the finite speed of propagation, there holds $u_+|_{[0,T]\times\omega}=0$. Let $v$ satisfy the backward wave equation, that is
    \begin{equation}
      \begin{cases}
      \square v(-t,x) = 0, \text{ in }[0,T]\times \Omega,\\
      v(t,\cdot)|_{[0,T]\times\p\Omega}=f_-(t),\\
      v(T,\cdot)=0,\ \p_t v(T,\cdot) = 0.
    \end{cases}
    \end{equation}
    Using finite speed of propagation again, we have $v|_{[0,T]\times \omega}=0$. Due to the energy estimate, $v(0,\cdot)\in L^2(\Omega)$ and $\p_t v(0,\cdot)\in H^{-1}(\Omega)$. Write $u_-(t)=v(T-t)$, then $u_-$ solves
    \begin{equation}
      \begin{cases}
      \square u_-(t,x) = 0, \text{ in }[0,T]\times \Omega,\\
      u_-(t,\cdot)|_{[0,T]\times\p\Omega}=f_-(t),\\
      u_-(0,\cdot)=v(0,\cdot),\ \p_t u_-(0,\cdot) = \p_t v(0,\cdot).
    \end{cases}
    \end{equation}
    Finally, let $u=u_+ + u_-$, it follows that
    \begin{equation}
      \begin{cases}
      \square u(t,x) = 0, \text{ in }[0,T]\times \Omega,\\
      u(t,\cdot)|_{[0,T]\times\p\Omega}=f(t),\\
      u(0,\cdot)=v(0,\cdot),\ \p_t u(0,\cdot) = \p_t v(0,\cdot).
    \end{cases}
  \end{equation}
  as well as $u|_{[0,T]\times \omega}=0$. Moreover, since $f\in \FiniteDimSpace$, the right hand side of \eqref{eq:stab-finite-dim-trace} vanishes. Hence we conclude that \eqref{eq:assum_2} is necessary.

\section{Finite element method}\label{section:FEM}
For the numerical discretization we will employ a stabilized space-time dG method that has been introduced in \cite{BP24_wave} for problem \eqref{eq:PDE+data_constraint} with the additional constraint $u = 0$ on $\Sigma$. Here we adapt this method to fit the case where no boundary data is known. Whenever possible we will make use of results already established in \cite{BP24_wave} to keep the presentation brief and focus on the novel aspects required to adjust the method to the stability etimates from Section \ref{section:stab}.

\subsection{Discretization of the space-time cylinder}\label{ssection:discr-geom}
\subsubsection{Spatial mesh}
Notice that in the geometrical configuration considered in Section \ref{ssection:stab-hoelder} neither $\Omega$ nor $\omega$ is a polygonal domain since part of their boundary is given by a sphere. We fit this boundary by allowing the elements to be curved. We start with a quasi-uniform triangulation $\hat{\mathcal{T}}_h$ of mesh width $h$ such that $\hat{\Omega}:= \cup_{K \in \hat{\mathcal{T}}_h} K $ contains $\Omega$, i.e.\ $\Omega \subset \hat{\Omega}$. Then we set $\mathcal{T}_h := \{ K \cap \Omega \mid K \in  \hat{\mathcal{T}}_h \}$, where we note that $\Omega = \cup_{K \in \mathcal{T}_h} K$ holds true. It can be ensured, see \cite[Appendix B]{BFMO}, that the continuous trace inequality
\bel{ieq:cont-trace-ieq}
\norm{v}_{ [L^2(\partial K)]^d } \lesssim \left( h^{-1/2} \norm{v}_{ [L^2(K)]^d } + h^{1/2} \norm{ \nabla v  }_{ [L^2(K)]^d }   \right), \; \forall v \in [H^1(K)]^d, \; K \in \mathcal{T}_h
\ee
holds. 

\subsubsection{Partition of the time axis} 
Let $(t_0, t_N) = (-T,T)$ or $(t_0, t_N) = (0,T)$ and consider a partition $t_0 < t_1 < \ldots < t_N$, of the time axis into subintervals $I_n := (t_n,t_{n+1}), n=0,\ldots,N-1$ of equal length $ \abs{t_{n+1} - t_n} \sim h$.
A corresponding partition of the space-time cylinder into time slabs is given by
\bel{eq:def-time-slab}
Q^n := I_n \times \Omega, \; \Sigma^n = I_n \times \partial \Omega, \; n=0,\ldots,N-1, \quad Q := \cup_{n=0}^{N-1} Q^n, \; \Sigma := \cup_{n=0}^{N-1} \Sigma^n.
\ee
We introduce a notation for space-time integrals on the slabs
\[ \begin{array}{r}
(u,v)_{Q^n} := \int\limits_{I_n} \int\limits_{\Omega} u v \; \dX \; \dT, 
\;
a(u,v)_{Q^n} := \! \int\limits_{I_n} \int\limits_{\Omega} \nabla u \cdot \nabla v \; \dX \; \dT,
\;
(u,v)_{ \Sigma^n }\\[3mm] = \int\limits_{I_n}  \int\limits_{ \partial \Omega} u v \; \dS \; \dT 
\end{array}
\]
and we set $\norm{v}^2_{Q^n} := (v,v)_{Q^n}$ and $ \norm{v}^2_{\Sigma^n} = (v,v)_{\Sigma^n}$. 
The data domain and integrals thereon are defined similarly
\[
\omega^n := I_n \times \omega, \quad (u,v)_{\omega^n} := \int\limits_{I_n} \int\limits_{\omega} u v \; \dX \; \dT,   \quad \norm{ u }_{ \STdata }^2 = \sum\limits_{n=0}^{N-1}(u,v)_{\omega^n}.
\]

\subsubsection{Finite element spaces} 

Based on the mesh $\mathcal{T}_h$ we introduce the (spatial) finite element space
\[
V_h^k := \{ u \in H^1(\Omega) \mid u|_K \in \mathbb{P}_k(K) \; \forall K \in \mathcal{T}_h \},
\]
where $\mathbb{P}_k(K)$ denotes the spaces of polynomials of order at most $k \in \mathbb{N}$ on $K$. 
To discretize time, let $\mathbb{P}_q(I_n)$ denote the space of polynomials up to degree $q \in \mathbb{N}_0$ on $I_n$ and define a discontinuous (in-time) finite element space by 
\[ \FullyDiscrSpace{k}{q} :=  \otimes_{n=0}^{N-1} \mathbb{P}_q(I_n) \otimes V_h^k,
\quad \ProdFullyDiscrSpace{k}{q} := \FullyDiscrSpace{k}{q} \times \FullyDiscrSpace{k}{q},  
\quad q \in \mathbb{N}_{0}, \; k \in \mathbb{N}. \]
Moreover, for tuples $\underline{\mathbf{U_h}} \in \ProdFullyDiscrSpace{k}{q}$ we introduce the notation
\bel{eq:mixed-vars} 
  \underline{\mathbf{U_h}} := ( \underline{u}_1,\underline{u}_2), \quad \underline{u}_j \in \FullyDiscrSpace{k}{q}, \; j= 1,2.
\ee
Functions in $ \ProdFullyDiscrSpace{k}{q} $ are potentially discontinuous between time-slabs. To denote the temporal jumps, we employ the common notation  
\bel{eq:def-jump-in-time}
v^{n}_{\pm}(x) := \lim_{s \rightarrow 0^{+}} v(x,t_n \pm s), \text{ and } \jump{ v^n } := v_{+}^n - v_{-}^n.
\ee

\subsection{A stabilized primal dual FEM using dG in time}\label{ssection:fully-discrete-method}
We proceed with the discussion of the variational formulation for our problem. 
The PDE constraint will be enforced using the bilinear form
\begin{equation}\label{eq:bil-form-def-fully-discrete}
A[ \underline{\mathbf{U}}_h, \underline{\mathbf{Y}}_h] = \! \sum\limits_{n=0}^{N-1} \! \left\{ (\partial_t \underline{u}_2, \underline{y}_1)_{Q^n} + a(\underline{u}_1,\underline{y}_1)_{Q^n}  
    + (\partial_t \underline{u}_1 - \underline{u}_2, \underline{y}_2)_{Q^n}    
  - ( \nabla \underline{u}_1 \! \cdot \! \mathbf{n}, \underline{y}_1)_{ \Sigma^n } 
  \right\}.
\end{equation}
Using this bilinear form we will formulate problem (\ref{eq:PDE+data_constraint}) at the discrete level as a PDE constrained optimization problem. 
In applications, the data to be fitted is usually corrupted by measurement noise. Here we assume that instead of the exact data $u_{\omega} = u$ on $\STdata$ we are merely given perturbed data 
\bel{eq:noise}
\tilde{u}_{\omega} = u_{\omega} + \delta u \quad \text{in } \STdata, \qquad \delta u \in L^2(\STdata).
\ee
We will search for approximate solutions $(\underline{\mathbf{U}}_h,\underline{\mathbf{Z}}_h) \in \ProdFullyDiscrSpace{k}{q} \times \ProdFullyDiscrSpace{k_{\ast}}{q_{\ast}}$ of \eqref{eq:PDE+data_constraint} as stationary points of the Lagrangian
\begin{align}
\mathcal{L}( \underline{\mathbf{U}}_h,\underline{\mathbf{Z}}_h) :&= 
 \frac{1}{2} \norm{ \underline{u}_1 - \tilde{u}_{\omega}}_{\omega_T}^2 +  A[ \underline{\mathbf{U}}_h, \underline{\mathbf{Z}}_h ]  
	 \nonumber  \\
  & + \frac{1}{2} S_h( \underline{\mathbf{U}}_h, \underline{\mathbf{U}}_h) + \frac{1}{2} S^{\uparrow \downarrow}_{h}( \underline{\mathbf{U}}_h, \underline{\mathbf{U}}_h ) - \frac{1}{2} S_h^{\ast}( \underline{\mathbf{Z}}_h, \underline{\mathbf{Z}}_h). \quad \label{eq:Lagrangian}
\end{align}
The terms in the first row enforce the data respectively PDE constraint, while the terms in the second row act as regularizers which will be described in detail in 
Section \ref{ssection:stab}. 
The first order optimality conditions then take the form:
Find $(\underline{\mathbf{U}}_h, \underline{\mathbf{Z}}_h) \in \ProdFullyDiscrSpace{k}{q} \times \ProdFullyDiscrSpace{ k_{\ast} }{ q_{\ast} } $ such that
\bel{eq:opt_compact_fully_disc} 
B[ ( \underline{\mathbf{U}}_h,\underline{\mathbf{Z}}_h), ( \underline{\mathbf{W}}_h ,\underline{\mathbf{Y}}_h ) ] = (\tilde{u}_{\omega}, \underline{w}_1)_{\STdata }
\ee 
for all $( \underline{\mathbf{W}}_h ,\underline{\mathbf{Y}}_h ) \in \ProdFullyDiscrSpace{k}{q} \times \ProdFullyDiscrSpace{ k_{\ast} }{ q_{\ast} } $, 
where 
\begin{align}\label{eq:complete_bfi_def_fully_discr}
B[ ( \underline{\mathbf{U}}_h,\underline{\mathbf{Z}}_h), ( \underline{\mathbf{W}}_h ,\underline{\mathbf{Y}}_h ) ]  := & (\underline{u}_1,\underline{w}_1)_{\STdata} + A[\underline{\mathbf{W}}_h, \underline{\mathbf{Z}}_h ] + A[\underline{\mathbf{U}}_h, \underline{\mathbf{Y}}_h ] \nonumber \\
	& + S_h( \underline{\mathbf{U}}_h, \underline{\mathbf{W}}_h ) + \StabTimeJumps{ \underline{\mathbf{U}}_h}{ \underline{\mathbf{W}}_h ) } - S_h^{\ast}( \underline{\mathbf{Y}}_h, \underline{\mathbf{Z}}_h). 
\end{align}

\subsubsection{Stabilization}\label{ssection:stab}
Since we are dealing with a potentially ill-posed problem, suitable regularization terms have to be introduced in addition to the PDE constraint. The choice of these terms is driven by the error analysis presented in Section \ref{section:error-analysis}. The main idea to obtain an upper bound on the error $e_h := u - \underline{u}_1$, where $u$ denotes the exact solution of (\ref{eq:PDE+data_constraint}) and $\underline{u}_1$ is the first component of $\underline{\mathbf{U}}_h$, is to apply one of the stability estimates (\ref{eq:Hoelder-stab}) or (\ref{eq:stab-finite-dim-trace}) to\footnote{To be precise, we will actually apply these estimates to a smoothed version of $e_h$ since $\underline{u}_1$ is not continuous in time.} $e_h$. It then remains to control the terms on the right hand side of these estimates by means of the stabilization. 

The following series of terms is dedicated to deal with the $H^{-1}$-norm of the PDE residual which appears in both estimates. We define a spatial continuous interior penalty term $J(\cdot,\cdot)$, which penalizes the jump of the gradient $\jump{ \nabla \underline{u}_1 }_F$ over the interior facet set $\mathcal{F}_i$ of $\mathcal{T}_h$, a Galerkin-least squares term $G(\cdot,\cdot)$ which enforces the PDE at the element level and a term $I_{0}(\cdot,\cdot)$ which ensures that $\underline{u}_2 = \partial_t \underline{u}_1$.
\begin{align*}
& \StabCIP{ \underline{\mathbf{U}}_h}{ \underline{\mathbf{W}}_h } := \sum\limits_{n=0}^{N-1} \int\limits_{I_n} \sum\limits_{F \in \mathcal{F}_i} h  ( \jump{ \nabla \underline{u}_1 }_F , \jump{ \nabla \underline{w}_1 }_F  )_{F} \; \dT,  
\\ 
& \StabGLS{ \underline{\mathbf{U}}_h}{ \underline{\mathbf{W}}_h } := \sum\limits_{n=0}^{N-1} \int\limits_{I_n} \sum\limits_{K \in \mathcal{T}_h }  h^2 ( \partial_t \underline{u}_2 - \Delta \underline{u}_1 , \partial_t \underline{w}_2 - \Delta \underline{w}_1 )_{K}  \; \dT, \quad   
\\
& \StabDt{ \underline{\mathbf{U}}_h}{ \underline{\mathbf{W}}_h } 
	:= \sum\limits_{n=0}^{N-1}(\underline{u}_2 - \partial_t \underline{u}_1, \underline{w}_2 - \partial_t \underline{w}_1)_{Q^n}.
\end{align*}

Additionally, we require a stabilization that will affect the communication between different time-slabs in a similar way as the term $J(\cdot,\cdot)$ affects the coupling between spatial elements. In the end, this requirement stems from the need to enforce that 
$ \underline{u}_1$ is globally in $H^1$ in order to be allowed to apply the stability 
estimates (\ref{eq:Hoelder-stab}) or (\ref{eq:stab-finite-dim-trace}) to the error. We introduce   
\begin{equation}\label{eq:def-jump-stab}
\StabTimeJumps{ \underline{\mathbf{U}}_h }{ \underline{\mathbf{W}}_h } 
:= \underline{I}_1^{\uparrow \downarrow}( \underline{\mathbf{U}}_h, \underline{\mathbf{W}}_h ) + 
\underline{I}_2^{\uparrow \downarrow}( \underline{\mathbf{U}}_h, \underline{\mathbf{W}}_h ), 
\end{equation} 
\begin{align*}
\underline{I}_1^{\uparrow \downarrow}( \underline{\mathbf{U}}_h, \underline{\mathbf{W}}_h ) &:= \sum\limits_{n=1}^{N-1} \left\{ \frac{1}{h} ( \jump{ \underline{u}_1^n } , \jump{ \underline{w}_1^n } )_{ \Omega} 
	+ h ( \jump{ \nabla \underline{u}_1^n } , \jump{ \nabla \underline{w}_1^n } )_{ \Omega} \right\}, \\
\underline{I}_2^{\uparrow \downarrow}( \underline{\mathbf{U}}_h, \underline{\mathbf{W}}_h ) &:= \sum\limits_{n=1}^{N-1} \frac{1}{h} ( \jump{ \underline{u}_2^n } , \jump{ \underline{w}_2^n } )_{ \Omega}.
\end{align*}
While the previous terms are required in either regime of stability, we now 
introduce two terms that are specific to each case. 
The estimate (\ref{eq:Hoelder-stab}) is conditional in the sense that $\norm{u}_{L^2(\Qmin)}^{1-\alpha}$ appears on the right hand side. In order to control the norm of $\underline{u}_1$ on $\STdom$ we introduce a Tikhonov term 
\begin{equation*}
\StabTikh{ \underline{\mathbf{U}}_h}{ \underline{\mathbf{W}}_h } := \gamma \sum\limits_{n=0}^{N-1} h^{2s} ( \underline{u}_1, \underline{w}_1)_{ Q^n },
\end{equation*}
where $\gamma \geq 0$ is a constant and $s := \min{ \{ q,k \} }$.  Notice that this regularization vanishes suitably 
fast with $h$ which ensures that it will not spoil the error estimates in case the data are clean.  \par 
Choosing $\gamma = 0$ is allowed  when we know a finite dimensional space in which the trace of $u$ on $\Sigma$ is contained. In this case, the Tikhonov regularization is in fact obsolete since estimate (\ref{eq:stab-finite-dim-trace}) contains no mass term in the volume. 
Instead, we need to control the distance of the trace of $\underline{u}_1$ to the finite dimensional subspace on $\Sigma$. 
To this end, we introduce 
\begin{equation}\label{eq:stab-compl}
\StabTrace{ \underline{\mathbf{U}}_h}{ \underline{\mathbf{W}}_h } := ( \compl \underline{u}_1, \compl \underline{w}_1)_{ \Sigma },
\end{equation}
where we recall that $\compl = 1 - \proj$ for some projection $\proj$ on $\FiniteDimSpace \subset L^2(\Sigma)$. We refer to Section \ref{ssection:impl-trace-stab} for a discussion on how to realize (\ref{eq:stab-compl}) conveniently in an implementation.
If no finite dimensional subspace $\FiniteDimSpace$ is known in which the trace of the solution is contained, we will set $\FiniteDimSpace =  L^2(\Sigma)$ which implies that $\StabTrace{ \underline{\mathbf{U}}_h}{ \underline{\mathbf{W}}_h } = 0$  for all $(\underline{\mathbf{U}}_h, \underline{\mathbf{W}}_h) \in \ProdFullyDiscrSpace{k}{q} \times \ProdFullyDiscrSpace{k}{q}$ holds. This convention allows us to treat both stability scenarios discussed in Section \ref{section:stab} at the same time.

Finally, let us collect the slab-local parts of the primal stabilization:
\begin{align}
S_{h}( \underline{\mathbf{U}}_h, \underline{\mathbf{W}}_h ) :=& 
	\StabCIP{ \underline{\mathbf{U}}_h}{ \underline{\mathbf{W}}_h } 
	+ \StabGLS{ \underline{\mathbf{U}}_h}{ \underline{\mathbf{W}}_h }
    + \StabDt{ \underline{\mathbf{U}}_h}{\underline{\mathbf{W}}_h } \nonumber \\
	& + \StabTikh{ \underline{\mathbf{U}}_h}{ \underline{\mathbf{W}}_h  }
          + \StabTrace{ \underline{\mathbf{U}}_h}{ \underline{\mathbf{W}}_h }. \label{eq:Def-Sh}
\end{align}
It remains to introduce the stabilization of the dual variable $ \underline{\mathbf{Z}}_h$. To this end,  we define:
\begin{equation}\label{eq:dual_stab_fully_disc}
S^{\ast}_h( \underline{\mathbf{Y}}_h, \underline{\mathbf{Z}}_h) := 
	 \sum\limits_{n=0}^{N-1} \big\{ (\underline{y}_1,\underline{z}_1)_{Q^n} + a(\underline{y}_1,\underline{z}_1)_{Q^n} + (\underline{y}_2,\underline{z}_2)_{Q^n} + h^{-1} (\underline{y}_1 ,\underline{z}_1 )_{ \Sigma^n }  \big\}. 
\end{equation}

\subsubsection{Remarks on the implementation}\label{ssection:impl-trace-stab}
To implement \eqref{eq:stab-compl} it is convenient to introduce an additional variable. 
Let  $ B^{0}[ \cdot ,\cdot] $ denote the bilinear form $ B[ \cdot ,\cdot] $ with the term $\StabTrace{ \cdot }{ \cdot }$ removed. 
We then solve the augmented variational formulation:

Find $(\underline{\mathbf{U}}_h, \underline{\mu_h} , \underline{\mathbf{Z}}_h) \in \ProdFullyDiscrSpace{k}{q} \times \FiniteDimSpace  \times \ProdFullyDiscrSpace{ k_{\ast} }{ q_{\ast} } $ such that
\bel{eq_var_formu_augmented} 
B^{\#}[ ( \underline{\mathbf{U}}_h, \underline{\mu_h}, \underline{\mathbf{Z}}_h), ( \underline{\mathbf{W}}_h , \underline{\eta_h},   \underline{\mathbf{Y}}_h ) ] = (\tilde{u}_{\omega}, \underline{w}_1)_{\STdata }
\ee 
for all $( \underline{\mathbf{W}}_h , \underline{\eta_h}, \underline{\mathbf{Y}}_h ) \in \ProdFullyDiscrSpace{k}{q} \times \FiniteDimSpace \times \ProdFullyDiscrSpace{ k_{\ast} }{ q_{\ast} } $, 
where
\begin{align*}
B^{\#}[ ( \underline{\mathbf{U}}_h, \underline{\mu_h}, \underline{\mathbf{Z}}_h), ( \underline{\mathbf{W}}_h , \underline{\eta_h},   \underline{\mathbf{Y}}_h ) ] 
:= B^{0}[ ( \underline{\mathbf{U}}_h,\underline{\mathbf{Z}}_h), ( \underline{\mathbf{W}}_h ,\underline{\mathbf{Y}}_h ) ] + ( \underline{u}_1 - \underline{\mu_h}, \underline{w}_1 - \underline{\eta_h} )_{\Sigma}. 
\end{align*}
Note that $( \underline{u}_1 - \underline{\mu_h}, - \underline{\eta_h} )_{\Sigma} = 0$ for all $\underline{\eta_h} \in  \FiniteDimSpace$ implies that  $\underline{\mu_h} = \proj \underline{u}_1 |_{\Sigma}$. Hence, the variable $\underline{\mu_h}$ can in fact be eliminated to arrive at the variational formulation \eqref{eq:opt_compact_fully_disc} which is more suitable for the error analysis.

To solve the arising linear systems efficiently, we actually implement a slightly modified version of \eqref{eq_var_formu_augmented}. In \cite[Section 5.1]{BP24_wave} an iterative solution strategy has been suggested for space-time dG discretizations of unique continuation problems that relies on the assumption that the discretization spaces are fully discontinuous between the time slabs and continuity in between is enforced via suitable jump terms. Hence, to allow us to use this strategy we replace the global space $\FiniteDimSpace$ in \eqref{eq_var_formu_augmented} by its 'broken-in-time' version $ \mathcal{V}^{\mathrm{disc}} :=  \otimes_{n=0}^{N-1}  \; \mathcal{V}_M|_{Q^n }$ and add the additional term 
\[
\sum\limits_{n=1}^{N-1} \frac{1}{h} ( \jump{ \underline{\mu}_h^n }   , \jump{ \underline{\eta}_h^n } )_{\partial \Omega}
\]
to the variational formulation to recover continuity in time. After these modifications the solution strategy discussed in \cite[Section 5.1]{BP24_wave} can be applied in a straightforward manner.


\subsection{Stability, continuity and interpolation results}\label{ssection:stab-interp}
 In this section we collect some elementary results to prepare the error analysis of the finite element method introduced in Section~\ref{ssection:fully-discrete-method}. The first step is to show that the bilinear form $B[ \cdot, \cdot ]$ is inf-sup stable with respect to a suitable norm.
Let us define
\begin{equation}\label{eq:def-h-semi-norms}
\abs{ \underline{\mathbf{U}}_h }_{S_h} := S_h( \underline{\mathbf{U}}_h, \underline{\mathbf{U}}_h)^{1/2}, \quad
	\abs{ \underline{\mathbf{U}}_h   }_{ \uparrow \downarrow } := \StabTimeJumps{ \underline{\mathbf{U}}_h}{ \underline{\mathbf{U}}_h }^{1/2}, \quad
	\norm{ \underline{\mathbf{Z}}_h }_{S_h^{\ast}} := S_h^{\ast}( \underline{\mathbf{Z}}_h, \underline{\mathbf{Z}}_h)^{1/2} 
\end{equation}
and
\bel{eq:triple_norm_h_def}
\tnorm{ (\underline{\mathbf{U}}_h, \underline{\mathbf{Z}}_h) }^2  :=  \abs{ \underline{\mathbf{U}}_h }_{S_h}^2 + \abs{ \underline{\mathbf{U}}_h   }_{ \uparrow \downarrow }^2 + \norm{ \underline{u}_1 }^2_{\STdata} +  \norm{ \underline{\mathbf{Z}}_h }_{S_h^{\ast}}^2. 
\ee
To prove that this defines a norm on the discrete spaces, we require the following result which follows easily from integration by parts.
\begin{lemma}\label{lem:IP-norm}
For $\underline{\mathbf{U}}_h \in \ProdFullyDiscrSpace{k}{q}$
and $y \in H^1_0(\STdom)$ it holds that:
\begin{align}
& \int\limits_{\STdom} \left\{ -(\partial_t \underline{u}_1)  \partial_t y + \nabla \underline{u}_1 \nabla y \right\} 
 =  \sum\limits_{n=0}^{N-1} \int\limits_{I_n} \sum\limits_{K \in \mathcal{T}_h }  ( \partial_t \underline{u}_2 - \Delta \underline{u}_1 , y )_{K} \; \dT \label{eq:IP-H-1} \\ 
	& + \sum\limits_{n=0}^{N-1} ( \underline{u}_2 - \partial_t \underline{u}_1,  \partial_t y )_{Q^n}  + \sum\limits_{n=0}^{N-1} \int\limits_{I_n} \sum\limits_{F \in \mathcal{F}_i}  ( \jump{ \nabla \underline{u}_1 \cdot \mathbf{n} }_{F} , y  )_{F} \; \dT 
	+ \sum\limits_{n=1}^{N-1}(  \jump{ \underline{u}_2^n } , y )_{ \Omega}, \nonumber \end{align}
where $ \jump{ \nabla \underline{u}_1 \cdot \mathbf{n} }_{F} $ denotes the jump of the normal derivative over interior facets.
\end{lemma}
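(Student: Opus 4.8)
The identity is a slab-by-slab, element-by-element integration by parts, and the only genuine subtleties are the two sources of discontinuity of $\underline{\mathbf{U}}_h$ — the spatial jumps of $\nabla \underline{u}_1$ across interior facets and the temporal jumps between time slabs — together with the substitution of the independent variable $\partial_t \underline{u}_2$ for $\partial_t^2 \underline{u}_1$. The plan is to treat the spatial term $\int_{\STdom} \nabla \underline{u}_1 \nabla y$ and the temporal term $-\int_{\STdom} (\partial_t \underline{u}_1)\partial_t y$ separately, localise each to a single slab (and, for the spatial one, a single element), perform the integration by parts there, and then reassemble. No regularity issues arise: $\underline{u}_1,\underline{u}_2$ are piecewise polynomial, hence smooth on each $K \in \mathcal{T}_h$ and each $I_n$, while $y \in H^1_0(\STdom)$ is amply regular for every step, and its vanishing trace on $\partial\STdom$ will be used repeatedly.

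For the spatial contribution I would fix $t$, integrate by parts on each element via $\int_K \nabla \underline{u}_1 \cdot \nabla y = -\int_K (\Delta \underline{u}_1)\, y + \int_{\partial K} (\nabla \underline{u}_1 \cdot \mathbf{n})\, y$, and sum over $K \in \mathcal{T}_h$. Since $y$ lies in $H^1_0$ and therefore has no spatial jump, the two contributions of each interior facet combine into $(\jump{\nabla \underline{u}_1 \cdot \mathbf{n}}_F, y)_F$, while the contributions on $\partial\Omega$ vanish because $y|_\Sigma = 0$. Integrating in $t$ over $I_n$ and summing over $n$ produces the $-\Delta \underline{u}_1$ part of the element-wise volume integral together with the facet-jump sum appearing in \eqref{eq:IP-H-1}.

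For the temporal contribution I would work on one slab $Q^n$. A first integration by parts in $t$ gives $-\int_{Q^n}(\partial_t \underline{u}_1)\partial_t y = \int_{Q^n}(\partial_t^2 \underline{u}_1)\, y - [(\partial_t \underline{u}_1, y)_\Omega]_{t_n^+}^{t_{n+1}^-}$. To eliminate $\partial_t^2 \underline{u}_1$ in favour of $\partial_t \underline{u}_2$ I write $\partial_t^2 \underline{u}_1 = \partial_t \underline{u}_2 - \partial_t(\underline{u}_2 - \partial_t \underline{u}_1)$ and integrate the second piece by parts once more; this generates the consistency term $(\underline{u}_2 - \partial_t \underline{u}_1, \partial_t y)_{Q^n}$ and a second endpoint contribution. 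The key observation is that the two endpoint contributions add so that only $\underline{u}_2$ survives, namely the boundary term on $Q^n$ equals $-[(\underline{u}_2, y)_\Omega]_{t_n^+}^{t_{n+1}^-}$. Summing over $n$ and using that $y$ is continuous in time (single-valued at each node $t_n$), the values of $\underline{u}_2$ from the two slabs adjacent to an interior node assemble into $(\jump{\underline{u}_2^n}, y)_\Omega$, whereas the outermost endpoints at $t_0$ and $t_N$ drop out since $y \in H^1_0(\STdom)$ has zero trace on $\{t_0,t_N\}\times\Omega$. This yields the sum $\sum_{n=1}^{N-1}(\jump{\underline{u}_2^n}, y)_\Omega$ and leaves the term $\sum_n (\partial_t \underline{u}_2, y)_{Q^n}$, which merges with the spatial $-\Delta \underline{u}_1$ term into the element-wise volume integral, completing the identity.

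The main obstacle is precisely this temporal bookkeeping: one must verify that the endpoint terms collapse to $\underline{u}_2$ rather than $\partial_t \underline{u}_1$, so that the interior-node sum is a jump of $\underline{u}_2$ and \emph{not} of $\partial_t \underline{u}_1$, and one must keep straight that the continuity of $y$ makes its temporal traces single-valued at each $t_n$ while its vanishing at $t_0,t_N$ removes the boundary-in-time terms. Everything else — the spatial facet bookkeeping and the recombination into the volume integral — is routine.
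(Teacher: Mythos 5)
Your proof is correct and follows exactly the route the paper intends: the paper gives no written argument, stating only that the lemma ``follows easily from integration by parts,'' and your elementwise spatial and slabwise temporal integration by parts, with the facet jumps assembling via the continuity of $y$, the endpoint terms collapsing to $\underline{u}_2$ so that the interior-node sum becomes $\sum_{n=1}^{N-1}(\jump{\underline{u}_2^n},y)_\Omega$, and the $t_0,t_N$ terms vanishing since $y \in H^1_0(\STdom)$, is precisely that computation. As a minor streamlining, splitting $\partial_t \underline{u}_1 = \underline{u}_2 - (\underline{u}_2 - \partial_t \underline{u}_1)$ \emph{before} integrating by parts in time produces the endpoint terms in $\underline{u}_2$ directly and avoids your second integration by parts, but this changes nothing essential.
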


\begin{lemma}[Norm]\label{lem:tnorm}
If either $\gamma >0$ or $\FiniteDimSpace$ is finite dimensional, then $\tnorm{ (\cdot, \cdot) }$ defines a norm on $\ProdFullyDiscrSpace{k}{q} \times \ProdFullyDiscrSpace{ k_{\ast} }{ q_{\ast} } $. 
\end{lemma}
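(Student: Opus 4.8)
The plan is to observe that $\tnorm{(\cdot,\cdot)}$ is, by \eqref{eq:triple_norm_h_def}, the square root of a sum of symmetric, manifestly positive semidefinite quadratic forms: $S_h(\cdot,\cdot)$ (a sum of $\StabCIP{\cdot}{\cdot}$, $\StabGLS{\cdot}{\cdot}$, $\StabDt{\cdot}{\cdot}$, $\StabTikh{\cdot}{\cdot}$ with $\gamma\ge 0$, and $\StabTrace{\cdot}{\cdot}$), $S^{\uparrow\downarrow}_h(\cdot,\cdot)$, the data form $(\cdot,\cdot)_{\STdata}$, and $S^\ast_h(\cdot,\cdot)$. Since each form is positive semidefinite, $U\mapsto\sqrt{S_h(U,U)}$ and the analogous maps are seminorms (Cauchy--Schwarz), and their $\ell^2$-combination $\tnorm{\cdot}$ is again a seminorm by Minkowski's inequality. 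Thus nonnegativity, homogeneity and the triangle inequality are automatic, and the only thing to prove is definiteness: I must show that $\tnorm{(\underline{\mathbf{U}}_h,\underline{\mathbf{Z}}_h)}=0$ forces $\underline{\mathbf{U}}_h=0$ and $\underline{\mathbf{Z}}_h=0$. Accordingly I assume all four contributions in \eqref{eq:triple_norm_h_def} vanish separately. The dual variable is immediate: by \eqref{eq:dual_stab_fully_disc}, $\norm{\underline{\mathbf{Z}}_h}_{S^\ast_h}=0$ already contains $\sum_n(\norm{\underline{z}_1}^2_{Q^n}+\norm{\underline{z}_2}^2_{Q^n})$, so $\underline{z}_1=\underline{z}_2=0$ and $\underline{\mathbf{Z}}_h=0$.

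For the primal variable I read off the consequences of $\abs{\underline{\mathbf{U}}_h}_{S_h}=\abs{\underline{\mathbf{U}}_h}_{\uparrow\downarrow}=\norm{\underline{u}_1}_{\STdata}=0$ from \eqref{eq:Def-Sh} and \eqref{eq:def-jump-stab}. Vanishing of $\StabDt{\cdot}{\cdot}$ gives $\underline{u}_2=\partial_t\underline{u}_1$ on every slab; vanishing of $\StabGLS{\cdot}{\cdot}$ gives $\partial_t\underline{u}_2-\Delta\underline{u}_1=0$ elementwise; vanishing of $\StabCIP{\cdot}{\cdot}$ gives $\jump{\nabla\underline{u}_1}_F=0$ across interior spatial facets; and vanishing of $S^{\uparrow\downarrow}_h$ gives $\jump{\underline{u}_1^n}=\jump{\nabla\underline{u}_1^n}=\jump{\underline{u}_2^n}=0$ across the time interfaces. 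The vanishing temporal jumps of $\underline{u}_1$ promote $\underline{u}_1$ to a genuine element of $H^1(\STdom)$ and identify its broken time derivative with the distributional one. Feeding these four identities into Lemma~\ref{lem:IP-norm}, every term on the right-hand side of \eqref{eq:IP-H-1} vanishes, so $\int_{\STdom}(-\partial_t\underline{u}_1\,\partial_t y+\nabla\underline{u}_1\cdot\nabla y)=0$ for all $y\in H^1_0(\STdom)$; that is, $\underline{u}_1$ solves $\Box\underline{u}_1=0$ weakly in $\STdom$, whence $\norm{\Box\underline{u}_1}_{H^{-1}(\STdom)}=0$.

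It remains to conclude $\underline{u}_1=0$, and here the dichotomy in the hypothesis enters. If $\gamma>0$, then $\StabTikh{\underline{\mathbf{U}}_h}{\underline{\mathbf{U}}_h}=0$ reads $\gamma\sum_n h^{2s}\norm{\underline{u}_1}^2_{Q^n}=0$, giving $\underline{u}_1=0$ directly, with no appeal to the PDE. If instead $\FiniteDimSpace$ is finite dimensional (and $\gamma$ may be $0$), then $\StabTrace{\underline{\mathbf{U}}_h}{\underline{\mathbf{U}}_h}=\norm{\compl\underline{u}_1}^2_{\Sigma}=0$ says that the lateral trace of $\underline{u}_1$ lies in $\FiniteDimSpace$; combining $\Box\underline{u}_1=0$, $\underline{u}_1|_{\STdata}=0$ and $\compl\underline{u}_1|_{\Sigma}=0$ with the Lipschitz estimate \eqref{eq:stab-finite-dim-trace} of Theorem~\ref{thm:lipschitz-stab-fin}---legitimate since $\underline{u}_1\in H^1(\STdom)$---makes the whole right-hand side vanish, forcing $\underline{u}_1=0$. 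In either case $\underline{u}_2=\partial_t\underline{u}_1=0$, so $\underline{\mathbf{U}}_h=0$, which proves definiteness and hence the claim.

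I expect the main obstacle to be the passage from discrete, slabwise information to the hypotheses of the continuous estimate: one must verify that the vanishing jump terms genuinely upgrade $\underline{u}_1$ to $H^1(\STdom)$ and make the broken space-time derivative coincide with the weak one, so that Lemma~\ref{lem:IP-norm} delivers $\Box\underline{u}_1=0$ as a bona fide weak wave equation on all of $\STdom$ rather than merely an elementwise relation. The second, more conceptual point is to recognize that the two branches of the hypothesis correspond precisely to the two distinct mechanisms---the Tikhonov term $\StabTikh{\cdot}{\cdot}$ on the one hand, and the trace term $\StabTrace{\cdot}{\cdot}$ together with Theorem~\ref{thm:lipschitz-stab-fin} on the other---by which the volumetric $L^2$-mass of $\underline{u}_1$ is controlled.
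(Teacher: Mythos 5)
Your proof is correct and follows essentially the same route as the paper's: both reduce the claim to definiteness, extract $\underline{\mathbf{Z}}_h=0$ from $S_h^\ast$, get $\underline{u}_2=\partial_t\underline{u}_1$ from $\StabDt{\cdot}{\cdot}$, handle $\gamma>0$ via the Tikhonov term, and in the finite-dimensional case use Lemma~\ref{lem:IP-norm} to conclude $\norm{\Box\underline{u}_1}_{H^{-1}(\STdom)}=0$ before invoking Theorem~\ref{thm:lipschitz-stab-fin} together with the vanishing data and trace terms. Your treatment is somewhat more explicit than the paper's (spelling out the seminorm properties and the slab-to-global $H^1$ upgrade), but the decomposition and the key ingredients are identical.
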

\begin{proof}
Let $\tnorm{ (\underline{\mathbf{U}}_h, \underline{\mathbf{Z}}_h) } = 0$. It follows readily from the definition of $S_h^{\ast}(\cdot,\cdot)$ that $\mathbf{Z}_h = 0$ and from the definition of $\StabDt{ \cdot }{ \cdot }$ that $\underline{u}_2 = \partial_t \underline{u}_1$. Thus, it remains to show that $\underline{u}_1 = 0$. We now consider the two cases separately. \par 
If $\gamma >0$, then $\underline{u}_1 = 0$ immediately follows from $\StabTikh{ \underline{\mathbf{U}}_h}{ \underline{\mathbf{U}}_h  } = 0$.
On the other hand, if $\FiniteDimSpace$ is finite dimensional and $\gamma =0$ we require another argument. From 
\[
\abs{ \underline{\mathbf{U}}_h }_{S_h} +  \abs{ \underline{\mathbf{U}}_h   }_{ \uparrow \downarrow }  = 0
\]
it follows that $\underline{u}_1 \in H^1(\STdom)$ and that all terms on the right hand side of (\ref{eq:IP-H-1}) vanish. This implies that 
\[
\norm{\Box \underline{u}_1}_{ H^{-1}(\STdom)} = \sup_{\substack{  y \in H^1_0(\STdom), \\ \norm{y}_{H^1(\STdom) } = 1  }}  \int\limits_{\STdom} \left\{ -(\partial_t \underline{u}_1)  \partial_t y + \nabla \underline{u}_1 \nabla y \right\} = 0.  
\]
Hence, applying the stability estimate from (\ref{eq:stab-finite-dim-trace}) to $\underline{u}_1$ yields 
\begin{align*}
	\norm{ \underline{u}_1 }_{L^{\infty}([0,T];L^2(\Omega))}+\norm{\p_t \underline{u}_1  }_{L^{2}([0,T];H^{-1}(\Omega))}&\lesssim \norm{\square \underline{u}_1 }_{H^{-1}(\STdom)}+\norm{ \underline{u}_1 }_{\STdata} + \norm{Q \underline{u}_1}_{L^2( \Sigma)} = 0,
    \end{align*}
where the last two terms vanish due to $\tnorm{ (\underline{\mathbf{U}}_h, \underline{\mathbf{Z}}_h) } = 0$. Hence, $\underline{u}_1 = 0$ as required.
\end{proof}

From the identity 
 $B[ (\underline{\mathbf{U}}_h,\underline{\mathbf{Z}}_h), ( \underline{\mathbf{U}}_h ,-\underline{\mathbf{Z}}_h ) ]  = \tnorm{ (\underline{\mathbf{U}}_h, \underline{\mathbf{Z}}_h) } \tnorm{ (\underline{\mathbf{U}}_h, -\underline{\mathbf{Z}}) }$
we immediately obtain the discrete stability  
\bel{eq:inf-sup}
\sup_{ (\underline{\mathbf{W}}_h,\underline{\mathbf{Y}}_h) \in  \ProdFullyDiscrSpace{k}{q} \times \ProdFullyDiscrSpace{ k_{\ast} }{ q_{\ast} }    } \!\!\!\!\!\!\!\! \frac{ B[ (\underline{\mathbf{U}}_h,\underline{\mathbf{Z}}_h), ( \underline{\mathbf{W}}_h , \underline{\mathbf{Y}}_h ) ] }{  \tnorm{ (\underline{\mathbf{W}}_h, \underline{\mathbf{Y}}_h) } }  \gtrsim \tnorm{ ( \underline{\mathbf{U}}_h, \underline{\mathbf{Z}}_h) }. 
\ee 
In the next lemma we record some continuity results for the bilinear form $A[\cdot,\cdot]$ that were shown in \cite[Lemma 4]{BP24_wave}.
\begin{lemma}[Continuity]\label{lem:bfi_stab_est}
\begin{enumerate}[label=(\alph*)]
\item For $\underline{\mathbf{U}}_h \in \ProdFullyDiscrSpace{k}{q}$ and $ \mathbf{Y} \in [H^1(\STdom)]^2 + \ProdFullyDiscrSpace{q_{\ast} }{k_{\ast} } $ we have 
\[ A[\underline{\mathbf{U}_h}, \mathbf{Y} ] \lesssim \abs{ \underline{\mathbf{U}}_h }_{\underline{S}_h} \left\{ \sum\limits_{n=0}^{N-1} h^{-2} \norm{ y_1 }_{Q^n}^2 +  \norm{ \nabla y_1 }_{Q^n}^2 + \norm{ y_2 }_{Q^n}^2  \right\}^{1/2}. \] 
\item For $\mathbf{U}|_{Q^n} \in \left[ H^{1}(Q^n) \cap L^2(0,T;H^2(\mathcal{T}_h)) \right] \times H^{1}(Q^n) $ for all $n=0,\ldots,N-1$ and $\underline{\mathbf{Y}}_h \in \ProdFullyDiscrSpace{ k_{\ast } }{ q_{\ast} }$ it holds that
\begin{align*}
A[ \mathbf{U}, \underline{\mathbf{Y}}_h ] \lesssim & \bigg( \sum\limits_{n=0}^{N-1} \big\{ \norm{\partial_t u_2}_{Q^n}^2  + \norm{ \nabla u_1 }_{Q^n}^2
 + \norm{\partial_t u_1}_{Q^n}^2 + \norm{ u_2}_{Q^n}^2  \\
 & + \int\limits_{I_n} \sum\limits_{K \in \mathcal{T}_h } h^2 \norm{u_1}_{H^2(K)}^2 \; \dT \big\}  \bigg)^{1/2}  \norm{  \underline{\mathbf{Y}}_h }_{ S^{\ast}_h  }.
\end{align*}
\end{enumerate}
\end{lemma}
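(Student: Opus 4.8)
The plan is to treat the two bounds with a common philosophy but with the roles of the two slots of $A[\cdot,\cdot]$ reversed: in (a) the first argument $\underline{\mathbf{U}}_h$ is discrete and will be integrated by parts, whereas in (b) the discrete argument is the second one $\underline{\mathbf{Y}}_h$ and a plain Cauchy--Schwarz splitting suffices. In both cases the point is to expose the residual quantities that are measured by $S_h(\cdot,\cdot)$, respectively $S_h^\ast(\cdot,\cdot)$.

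For part (a) I would first dispose of the surface term $-(\nabla \underline{u}_1 \cdot \mathbf{n}, \underline{y}_1)_{\Sigma^n}$ by integrating $a(\underline{u}_1, \underline{y}_1)_{Q^n}$ by parts element-wise. Since $\underline{u}_1$ is piecewise polynomial and globally $H^1$ in space, the element-boundary contributions reorganize into exactly three pieces: the element-interior Laplacian $-(\Delta \underline{u}_1, \underline{y}_1)_K$, the interior-facet jumps $(\jump{\nabla \underline{u}_1 \cdot \mathbf{n}}_F, \underline{y}_1)_F$ (the tangential part of $\nabla \underline{u}_1$ being continuous across $F$, so this is controlled by $\StabCIP{\cdot}{\cdot}$), and a $\partial\Omega$ contribution that cancels precisely the explicit $\Sigma^n$ term already present in $A$. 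Combining with the volume terms, $A[\underline{\mathbf{U}}_h, \mathbf{Y}]$ is rewritten as a sum of the products $(\partial_t \underline{u}_2 - \Delta \underline{u}_1, \underline{y}_1)_K$, the facet-jump products, and $-(\underline{u}_2 - \partial_t \underline{u}_1, \underline{y}_2)_{Q^n}$. These are exactly the residuals weighted by $\StabGLS{\cdot}{\cdot}$, $\StabCIP{\cdot}{\cdot}$ and $\StabDt{\cdot}{\cdot}$, so Cauchy--Schwarz with the built-in $h$-weights, combined with the continuous trace inequality \eqref{ieq:cont-trace-ieq} to convert $\norm{\underline{y}_1}_F$ into $h^{-1/2}\norm{\underline{y}_1}_K + h^{1/2}\norm{\nabla \underline{y}_1}_K$, produces the stated bound with $\abs{\underline{\mathbf{U}}_h}_{S_h}$ as the left factor. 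This step is essentially the identity of Lemma~\ref{lem:IP-norm}, specialized to a general (not necessarily vanishing) test function.

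For part (b) the first argument $\mathbf{U}$ is not discrete, so I would not integrate by parts but simply estimate each of the four terms of $A[\mathbf{U}, \underline{\mathbf{Y}}_h]$ directly. The volume terms $(\partial_t u_2, \underline{y}_1)_{Q^n}$, $a(u_1, \underline{y}_1)_{Q^n}$ and $(\partial_t u_1 - u_2, \underline{y}_2)_{Q^n}$ are bounded by Cauchy--Schwarz and pair off against $\norm{\underline{y}_1}_{Q^n}$, $\norm{\nabla \underline{y}_1}_{Q^n}$ and $\norm{\underline{y}_2}_{Q^n}$, all of which enter $\norm{\underline{\mathbf{Y}}_h}_{S_h^\ast}$. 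The delicate term is the surface integral $(\nabla u_1 \cdot \mathbf{n}, \underline{y}_1)_{\Sigma^n}$: I would split the $h$-scaling as $\big(h\norm{\nabla u_1 \cdot \mathbf{n}}_{\Sigma^n}^2\big)^{1/2}\big(h^{-1}\norm{\underline{y}_1}_{\Sigma^n}^2\big)^{1/2}$, so that the second factor is exactly the $h^{-1}$-weighted boundary contribution to $S_h^\ast$. It then remains to absorb $h\norm{\nabla u_1 \cdot \mathbf{n}}_{\Sigma^n}^2$; applying \eqref{ieq:cont-trace-ieq} to $\nabla u_1$ and using the slab-wise $H^2(\mathcal{T}_h)$ regularity bounds it by $\norm{\nabla u_1}_{Q^n}^2 + \int_{I_n}\sum_{K\in\mathcal{T}_h} h^2 \norm{u_1}_{H^2(K)}^2$, which are precisely the remaining terms on the right-hand side of (b).

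I expect the main obstacle to be the correct bookkeeping of the boundary terms in both parts. In (a) one must verify that the $\partial\Omega$ contribution generated by the element-wise integration by parts cancels the explicit $\Sigma^n$ term in $A$ exactly, since otherwise an uncontrolled boundary quantity survives; in (b) one must fix the $h$-power on the surface term so that the $h^{-1}$-weighted trace norm in $S_h^\ast$ is consumed and the leftover is bounded by the $H^2$-regularity through the trace inequality at the correct scaling. Everything else reduces to routine Cauchy--Schwarz and standard scaling arguments. (In fact these estimates are precisely \cite[Lemma 4]{BP24_wave}, so one may alternatively invoke that result directly.)
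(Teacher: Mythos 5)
Your proposal is correct, but it is worth noting that the paper does not prove this lemma at all: it simply records the two estimates as \cite[Lemma 4]{BP24_wave}, so your blind reconstruction supplies the argument the paper outsources. The reconstruction itself is sound and is the natural proof given how the stabilization terms were designed. In (a), the element-wise spatial integration by parts of $a(\underline{u}_1,\underline{y}_1)_{Q^n}$ makes the $\partial\Omega$ contribution cancel the explicit $-(\nabla \underline{u}_1\cdot\mathbf{n},\underline{y}_1)_{\Sigma^n}$ term of $A$ exactly, leaving precisely the three residuals penalized by $J$, $G$ and $I_0$; weighted Cauchy--Schwarz (with \eqref{ieq:cont-trace-ieq} applied to the facet terms) then yields the stated right-hand factor, and since the omitted contributions $J_\gamma$ and $R_{\mathcal Q}$ in \eqref{eq:Def-Sh} are non-negative, the bound $\sqrt{J}+\sqrt{G}+\sqrt{I_0}\lesssim \abs{\underline{\mathbf{U}}_h}_{S_h}$ holds as you need. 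In (b), your $h^{1/2}\cdot h^{-1/2}$ splitting of the lateral boundary term consumes the $h^{-1}$-weighted trace term in \eqref{eq:dual_stab_fully_disc}, and applying \eqref{ieq:cont-trace-ieq} to $\nabla u_1$ (which is where the assumption $u_1\in L^2(I_n;H^2(\mathcal T_h))$ enters) produces exactly the terms $\norm{\nabla u_1}_{Q^n}^2+\int_{I_n}\sum_{K}h^2\norm{u_1}_{H^2(K)}^2$ appearing on the right of (b); the remaining volume terms are plain Cauchy--Schwarz against $S_h^\ast$. One minor imprecision: your aside that step (a) "is essentially the identity of Lemma~\ref{lem:IP-norm}" is loose, since that identity is global in time and involves an integration by parts in $t$ producing the temporal jump terms $\jump{\underline{u}_2^n}$, none of which arise in your slab-local, space-only integration by parts; this does not affect your argument, whose steps are stated independently, but the two identities should not be conflated.
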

Moreover, we will need the following interpolation results from \cite{BP24_wave}. 
\begin{lemma}[Interpolation]\label{lem:interp}
There exists an interpolation operator $\Pi_h = \Pi_{h}^{k,q}$ into $\FullyDiscrSpace{k}{q}$ such that for $s = \min{ \{ q,k \} }$ and $n=0,\ldots,N-1$ 
the following interpolation results are valid.
\begin{enumerate}[label=(\alph*)]
\item $\sum\limits_{n=0}^{N-1}  \{ h^{-1} \norm{u - \Pi_h u }_{ L^2(Q^n) } + \norm{ u - \Pi_h u }_{ H^1(Q^n) } \} \lesssim h^{\ell} \norm{u}_{H^{\ell + 1 }(Q)}$, \\ for $0 \leq \ell \leq s$.
\item For $u \in H^{\ell+1}(Q^n) \cap C^0(I_n, H^2(\Omega))$  have: 
	\[ \left(\sum\limits_{n=0}^{N-1} \int\limits_{I_n} \sum\limits_{K \in \mathcal{T}_h } h^2 \norm{ u - \Pi_h  u }_{ H^2(K) }^2 \; \dT \right)^{1/2} \lesssim h^{\ell} \norm{u}_{H^{\ell +1}(Q)}, \; \text{for } 1 \leq \ell \leq s. \] 
\item  $ \norm{(u - \Pi_h u )( \tau, \cdot )}_{\Omega} \lesssim  h^{\ell + 1/2} \norm{u}_{H^{ \ell + 1 }(Q^n)},  \; \tau \in \{ t_n,t_{n+1}\}$ and $0 \leq \ell \leq s$.
\item $  \norm{\nabla(u - \Pi_h u )( \tau, \cdot )}_{\Omega} \lesssim  h^{\ell - 1/2} \norm{u}_{H^{ \ell + 1 }(Q^n)},  \; \tau \in \{ t_n,t_{n+1}\}$ and $ 1 \leq \ell \leq s$.
\end{enumerate}
\end{lemma}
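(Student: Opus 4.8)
The plan is to construct $\Pi_h$ as a tensor product of a spatial quasi-interpolation operator and a temporal projection, and then to reduce each of (a)--(d) to the standard one-dimensional-in-time and $d$-dimensional-in-space approximation properties of these two factors. Concretely, I would take $R_x \colon H^1(\Omega) \to V_h^k$ to be a Scott--Zhang-type quasi-interpolant (defined on $H^1$, stable in $L^2(\Omega)$ and $H^1(\Omega)$, and satisfying $\norm{v - R_x v}_{H^m(K)} \lesssim h^{\ell+1-m} \norm{v}_{H^{\ell+1}(\tilde K)}$ on element patches $\tilde K$ for $0 \le m \le \ell+1 \le k+1$), and $P_t^n \colon L^2(I_n) \to \mathbb{P}_q(I_n)$ the $L^2$-projection in time, which is stable and obeys $\norm{w - P_t^n w}_{H^m(I_n)} \lesssim h^{\ell+1-m} \norm{w}_{H^{\ell+1}(I_n)}$ for $0 \le m \le \ell+1 \le q+1$. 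I then set $(\Pi_h u)|_{Q^n} := (P_t^n \otimes R_x) u$, which lies in $\mathbb{P}_q(I_n) \otimes V_h^k$ as required; since the two factors act on disjoint variables, $R_x$ commutes with $\partial_t$, which is the key structural fact used below.

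For the bulk estimates (a) and (b) I would use the anisotropic splitting $u - \Pi_h u = (u - R_x u) + (R_x u - P_t^n R_x u)$ together with the $L^2$- and $H^1$-stability of $R_x$ to transfer temporal derivatives of $R_x u$ onto $u$. The first term is controlled by the spatial estimate and the second by the temporal one, each contributing at most $h^{\ell+1}$ in $L^2(Q^n)$ and losing one power of $h$ per derivative in $H^1(Q^n)$; summing over slabs and dividing by $h$ where required yields the $h^{\ell}$ rate in (a). The requirement $\ell \le s = \min\{q,k\}$ is precisely the condition that both the spatial degree $k$ and the temporal degree $q$ suffice to reproduce polynomials of order $\ell$. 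Estimate (b) follows identically, now using $\norm{v - R_x v}_{H^2(K)} \lesssim h^{\ell-1}\norm{v}_{H^{\ell+1}}$ applied pointwise in time --- which is where the hypothesis $u \in C^0(I_n, H^2(\Omega))$ enters --- so that after multiplying by $h$ and integrating over $I_n$ one again obtains $h^{\ell}\norm{u}_{H^{\ell+1}(Q)}$, the restriction $\ell \ge 1$ reflecting the two spatial derivatives.

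For the endpoint-in-time traces (c) and (d) I would not require $\Pi_h$ to interpolate at $t_n, t_{n+1}$, but instead reduce the pointwise-in-time values to bulk quantities via the scaled trace inequality on the interval $I_n$ of length $\sim h$, namely $\norm{g(\tau)}^2 \lesssim h^{-1}\norm{g}_{L^2(I_n)}^2 + h\norm{\partial_t g}_{L^2(I_n)}^2$, applied with $g = (u - \Pi_h u)(\cdot)$ measured in $L^2(\Omega)$ for (c) and in $\norm{\nabla \cdot}_{L^2(\Omega)}$ for (d). Feeding in the bulk rates of (a) gives, for (c), a bound of order $h^{-1}(h^{\ell+1})^2 + h(h^{\ell})^2 \sim h^{2\ell+1}$, i.e.\ the half-integer order $h^{\ell+1/2}$; for (d) each index shifts down by one because a gradient has been spent, producing $h^{2\ell-1}$ and hence $h^{\ell-1/2}$, consistent with the stated range $\ell \ge 1$.

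This tensor-product machinery is exactly the content of the interpolation results of \cite{BP24_wave}, so the only genuinely new point to verify is that it survives the fitted, curved mesh forced by the spherical parts of $\partial\Omega$ and $\partial\omega$ in the geometry of Section~\ref{ssection:stab-hoelder}. This is the step I expect to be the main obstacle: on the curved boundary elements $K = \hat K \cap \Omega$ one must replace the affine interpolation estimates by their isoparametric counterparts and check that the approximation orders and constants are preserved under the element maps. The relevant trace inequality on such elements is supplied by \eqref{ieq:cont-trace-ieq} and the construction is carried out in \cite[Appendix B]{BFMO}; granting these, the estimates (a)--(d) go through verbatim.
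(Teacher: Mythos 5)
The paper does not actually prove this lemma: it is imported verbatim, with a citation, from \cite{BP24_wave} (``we will need the following interpolation results from \cite{BP24_wave}''), so there is no in-paper argument to compare against. Your tensor-product construction --- spatial quasi-interpolant times temporal polynomial projection, anisotropic splitting for the bulk bounds, and the scaled one-dimensional trace inequality in time for (c)--(d) --- is exactly the standard machinery behind the cited results, and your identification of the fitted, curved elements as the only point specific to this paper's setting matches what the paper itself relies on via \cite[Appendix B]{BFMO} and \eqref{ieq:cont-trace-ieq}. So in substance your proposal reconstructs the reference's proof rather than deviating from the paper.

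One bookkeeping caveat you gloss over in (b) and (d): the temporal-error contribution measured in spatial $H^2$ (respectively, the mixed derivative $\partial_t\nabla$ of the error needed to feed the trace inequality in (d)) cannot be estimated by applying the temporal approximation property at full order $\ell$, since $\norm{(I-P_t^n)D_x^2 R_x u}_{L^2}$ bounded that way would require the mixed derivative $\partial_t^{\ell}D_x^2 u$, of total order $\ell+2$, exceeding the assumed $H^{\ell+1}(Q^n)$ regularity. One must instead spend only $\ell-1$ temporal orders and absorb the two spatial derivatives through $H^2$-stability of $R_x$ (and similarly distribute derivatives for (d)), so that the total count stays at $\ell+1$; this is compatible with the factor $h^2$ (resp.\ the target rate $h^{\ell-1/2}$) in the statement. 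Your approach accommodates this, but the phrase ``follows identically'' hides precisely the step where a naive accounting would fail; relatedly, the hypothesis $u\in C^0(I_n,H^2(\Omega))$ is there to make a pointwise-in-time (Lagrange-type) temporal interpolant well defined on $H^2(\Omega)$-valued functions, not to license the elementwise spatial estimate as you suggest --- with your $L^2$-projection in time it is simply not needed.
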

Interpolation into the spaces containing the dual variable is denoted by $\Pi_h^{\ast} = \Pi_{h}^{k_{\ast},q_{\ast} } $. 
These operators are extended to tuples $\mathbf{W} = (w_1,w_2)$ in a componentwise-manner, i.e.\
$\mathbf{\Pi}_h \mathbf{W} :=  ( \Pi_h w_1 , \Pi_h w_2 )$. 
The following lemma studies the interplay between interpolation of the exact solution of \eqref{eq:PDE+data_constraint} and the stabilization terms. 
\begin{lemma}\label{lem:interp_in_stab}
Let $u \in H^{s+2}(Q)$ for $s = \min{ \{ q,k \} }$ solve \eqref{eq:PDE+data_constraint} and set $ \mathbf{U} = (u, \partial_t u)$.
\begin{enumerate}[label=(\alph*)]
\item For  $u \in H^{s+2 }(Q) \cap C^{0}([0,T],H^2(\Omega))$ we have 
	\[ \abs{ \mathbf{\Pi}_h \mathbf{U} }_{ S_h} \lesssim  h^{ s }  \norm{u}_{ H^{ s+1 }(\STdom)  }  \text{ and } \abs{ \mathbf{\Pi}_h \mathbf{U} }_{ \uparrow \downarrow }  \lesssim  h^{ s }  \norm{u}_{ H^{ s+2 }(\STdom)  }.  \]
\item For  $ \underline{\mathbf{U}}_h \in \ProdFullyDiscrSpace{k}{q}$ it holds that
\[
 \norm{u-\underline{u}_1}_{\STdata}
\lesssim   h^{ s+1 }  \norm{u}_{ H^{s+1}(\STdom)} + \tnorm{ ( \underline{\mathbf{U}}_h - \mathbf{\Pi}_h \mathbf{U},0) }.
\]
\item For $\mathbf{Y} = (y_1,y_2) \in [H_0^1(\STdom )]^2$ we have
\[ \norm{ \mathbf{\Pi}_h^{\ast}  \mathbf{Y} }_{ S^{\ast}_h } \lesssim  \norm{y_1}_{H^1(\STdom)} + \norm{y_2}_{H^1(\STdom)}. 
\]
\end{enumerate}
\end{lemma}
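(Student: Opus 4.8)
The plan is to reduce all three parts to the interpolation estimates of Lemma~\ref{lem:interp}, the continuous trace inequality \eqref{ieq:cont-trace-ieq}, and the fact that the exact solution $u$ satisfies $\Box u = 0$ and is smooth enough that $u$, $\nabla u$ and $\partial_t u$ have vanishing jumps across both the spatial facets and the time-slab interfaces. Throughout I use $\mathbf{\Pi}_h \mathbf{U} = (\Pi_h u, \Pi_h \partial_t u)$.

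For the first bound in (a) I would expand $\abs{\mathbf{\Pi}_h\mathbf{U}}_{S_h}^2 = S_h(\mathbf{\Pi}_h\mathbf{U}, \mathbf{\Pi}_h\mathbf{U})$ into its five constituent forms and treat each separately. In $\StabCIP{\cdot}{\cdot}$, since $\jump{\nabla u}_F = 0$ I replace $\jump{\nabla \Pi_h u}_F$ by $\jump{\nabla(\Pi_h u - u)}_F$, apply \eqref{ieq:cont-trace-ieq} on each element, and conclude with Lemma~\ref{lem:interp}(a),(b), so that the facet weight $h$ yields the rate $h^{2s}\norm{u}_{H^{s+1}(\STdom)}^2$. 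For the Galerkin--least-squares form $\StabGLS{\cdot}{\cdot}$ I invoke $\Box u = 0$ to rewrite the residual as $\partial_t(\Pi_h \partial_t u) - \Delta(\Pi_h u) = \partial_t(\Pi_h \partial_t u - \partial_t u) - \Delta(\Pi_h u - u)$, whose $h^2$-weighted $L^2$-norm is controlled by the second-order estimate Lemma~\ref{lem:interp}(b) together with a temporal interpolation bound on $\partial_t(\Pi_h \partial_t u - \partial_t u)$. The term $\StabDt{\cdot}{\cdot}$ measures $\Pi_h \partial_t u - \partial_t \Pi_h u$, again an interpolation quantity; the Tikhonov term is immediately $\lesssim h^{2s}\norm{\Pi_h u}_\STdom^2 \lesssim h^{2s}\norm{u}_{H^{s+1}(\STdom)}^2$; and the trace term $\StabTrace{\cdot}{\cdot}$ either vanishes (when $\FiniteDimSpace = L^2(\Sigma)$, so $\compl = 0$) or, in the finite-dimensional regime where $u|_\Sigma \in \FiniteDimSpace$, satisfies $\compl \Pi_h u = \compl(\Pi_h u - u)$ on $\Sigma$ and is bounded by \eqref{ieq:cont-trace-ieq} and Lemma~\ref{lem:interp}(a).

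For the second bound in (a) I would split $\StabTimeJumps{\mathbf{\Pi}_h\mathbf{U}}{\mathbf{\Pi}_h\mathbf{U}}$ into its two pieces. Since $u$ is continuous across the interfaces $t = t_n$, the jumps $\jump{\Pi_h u^n}$, $\jump{\nabla \Pi_h u^n}$ and $\jump{\Pi_h \partial_t u^n}$ coincide with the jumps of the corresponding interpolation errors, which I control with the trace-in-time estimates Lemma~\ref{lem:interp}(c),(d); the weights $h^{-1}$ and $h$ then combine with the half-order boundary rates to produce $h^{2s}$. Here it is the contribution $\tfrac1h \norm{\jump{\Pi_h \partial_t u^n}}_\Omega^2$ that forces $\partial_t u \in H^{s+1}(\STdom)$, i.e.\ $u \in H^{s+2}(\STdom)$, explaining the stronger regularity in this estimate.

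Parts (b) and (c) are then routine. For (b) I use $\norm{u - \underline u_1}_\STdata \le \norm{u - \Pi_h u}_\STdata + \norm{\Pi_h u - \underline u_1}_\STdata$, where the first summand is $\lesssim h^{s+1}\norm{u}_{H^{s+1}(\STdom)}$ by Lemma~\ref{lem:interp}(a) and the second is one of the components of $\tnorm{(\underline{\mathbf{U}}_h - \mathbf{\Pi}_h\mathbf{U},0)}$ by definition \eqref{eq:triple_norm_h_def}. For (c) I expand $\norm{\mathbf{\Pi}_h^\ast \mathbf{Y}}_{S_h^\ast}^2$; the volume and gradient terms follow from $H^1$-stability of $\Pi_h^\ast$ (Lemma~\ref{lem:interp}(a) with $\ell = 0$), while for the boundary term $h^{-1}\norm{\Pi_h^\ast y_1}_{\Sigma^n}^2$ I use $y_1|_\Sigma = 0$ (as $y_1 \in H^1_0(\STdom)$) to write $\Pi_h^\ast y_1 = \Pi_h^\ast y_1 - y_1$ on $\Sigma$ and then \eqref{ieq:cont-trace-ieq} to absorb the weight $h^{-1}$ into the $H^1(\STdom)$-norm. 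The main obstacle is the analysis of $\StabGLS{\cdot}{\cdot}$ and $\StabDt{\cdot}{\cdot}$ in part (a): one must estimate $\partial_t(\Pi_h \partial_t u - \partial_t u)$ and $\Pi_h \partial_t u - \partial_t \Pi_h u$, which quantify the failure of the tensor-product interpolant to commute with $\partial_t$, and it is precisely here that $\Box u = 0$ must be used to cancel the leading residual before the interpolation estimates apply.
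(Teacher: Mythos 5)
Your proposal is correct and takes essentially the same approach as the paper: the two terms the paper actually estimates in (a) — the Tikhonov term and the trace term, via $\compl u = 0$ on $\Sigma$, continuity of $\compl$, the trace inequality \eqref{ieq:cont-trace-ieq} and Lemma~\ref{lem:interp}(a) — and the triangle-inequality proof of (b) coincide with yours. For all the remaining terms (CIP, GLS, $I_0$, the temporal jumps, and part (c)) the paper simply cites \cite[Lemmas 6 and 12]{BP24_wave}, and your self-contained reconstructions are precisely the standard arguments those citations supply, in particular invoking $\Box u = 0$ to cancel the leading GLS residual and the time-continuity of $u$ and $\partial_t u$ in the jump terms, correctly identifying the latter as the source of the $H^{s+2}$ requirement.
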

\begin{proof}
\begin{enumerate}[label=(\alph*)]
\item We only have to show the estimates for the terms $\StabTikh{ \mathbf{\Pi}_h \mathbf{U}}{\mathbf{\Pi}_h  \mathbf{U}  } $ and $\StabTrace{\mathbf{\Pi}_h  \mathbf{U} }{\mathbf{\Pi}_h \mathbf{U} }$ as the other terms have already been treated in \cite[Lemma 6 (a) \& Lemma 12]{BP24_wave}. Clearly, 
\[
\StabTikh{ \mathbf{\Pi}_h \mathbf{U} }{\mathbf{\Pi}_h \mathbf{U} } = \gamma h^{2s} \norm{ \Pi_h  u }_{L^2( \STdom)}^2 
\lesssim h^{2s}  \norm{ u }_{H^1( \STdom)}^2, 
\]
where Lemma \ref{lem:interp} (a) has been employed.
Using that $u|_{\Sigma} \in \FiniteDimSpace$, i.e. $\compl u = 0$ on $\Sigma$ and continuity of $\compl$ we obtain 
\begin{align*}
\StabTrace{\mathbf{\Pi}_h  \mathbf{U} }{\mathbf{\Pi}_h \mathbf{U} }  = 
\norm{ \compl ( \Pi_h u - u  )   }_{L^2(\Sigma)}^2 
\lesssim \norm{  \Pi_h u - u    }_{L^2(\Sigma)}^2 
\lesssim h^{2(s+1/2)} \norm{u}_{ H^{ s+1 }(\STdom)  }^2, 
\end{align*}
where the trace inequality (\ref{ieq:cont-trace-ieq}) and Lemma \ref{lem:interp} (a) have been used.
\item This follows immediately from the triangle inequality and interpolation results.
\item We refer to \cite[Lemma 6 (c)]{BP24_wave} for the proof.  
\end{enumerate}
\end{proof}

\subsection{Consistency and residual bounds}\label{ssection:consistency-res}
In this section we derive two important auxiliary results to prepare the error 
analysis given in Section \ref{section:error-analysis}.

\begin{lemma}\label{lem:consistency-bound}
Let $u$ be a solution of \eqref{eq:PDE+data_constraint} and set $ \mathbf{U} = (u, \partial_t u)$. Then for any  
$ \underline{\mathbf{W}}_h \in \ProdFullyDiscrSpace{k}{q}$ and $ \underline{\mathbf{Y}}_h \in  \ProdFullyDiscrSpace{ k_{\ast } }{ q_{\ast} }$ it holds for $s = \min{ \{ q,k \} }$ that 
\begin{align*}
(u -  \Pi_h u,\underline{w}_1)_{\STdata} -  S_h(\mathbf{\Pi}_h \mathbf{U}, \underline{\mathbf{W}}_h) -
A[ \mathbf{\Pi}_h \mathbf{U} , \underline{\mathbf{Y}}_h ] 
	\lesssim h^{s} \norm{u}_{H^{s+1}(\STdom)} \tnorm{ (\underline{\mathbf{W}}_h, \underline{\mathbf{Y}}_h) }. 
\end{align*}

\end{lemma}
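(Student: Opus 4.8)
The plan is to split the left-hand side into its three structural pieces — the data-fitting term $(u - \Pi_h u, \underline{w}_1)_{\STdata}$, the primal stabilization $S_h(\mathbf{\Pi}_h\mathbf{U}, \underline{\mathbf{W}}_h)$, and the PDE-coupling term $A[\mathbf{\Pi}_h\mathbf{U}, \underline{\mathbf{Y}}_h]$ — and to bound each by a product of an $h$-power (supplied by the interpolation estimates of Lemma~\ref{lem:interp}) against a factor dominated by $\tnorm{(\underline{\mathbf{W}}_h, \underline{\mathbf{Y}}_h)}$. The only piece that needs more than a Cauchy--Schwarz is the $A$-term, where the key input is Galerkin consistency of the exact solution.

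For the data term I would simply use Cauchy--Schwarz, $(u - \Pi_h u, \underline{w}_1)_{\STdata} \le \norm{u - \Pi_h u}_{\STdata}\,\norm{\underline{w}_1}_{\STdata}$, bound the first factor by Lemma~\ref{lem:interp}(a) (which after summing over slabs gives $\norm{u - \Pi_h u}_{L^2(\STdom)} \lesssim h^{s+1}\norm{u}_{H^{s+1}(\STdom)}$, using $\STdata \subset \STdom$), and absorb the second factor into $\tnorm{(\underline{\mathbf{W}}_h, \underline{\mathbf{Y}}_h)}$ since $\norm{\underline{w}_1}_{\STdata}$ is one of the summands in \eqref{eq:triple_norm_h_def}. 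For the stabilization term, using that $S_h(\cdot,\cdot)$ is a symmetric positive-semidefinite form I would apply Cauchy--Schwarz in the induced seminorm, $S_h(\mathbf{\Pi}_h\mathbf{U}, \underline{\mathbf{W}}_h) \le \abs{\mathbf{\Pi}_h\mathbf{U}}_{S_h}\,\abs{\underline{\mathbf{W}}_h}_{S_h}$, bound $\abs{\mathbf{\Pi}_h\mathbf{U}}_{S_h} \lesssim h^s\norm{u}_{H^{s+1}(\STdom)}$ by Lemma~\ref{lem:interp_in_stab}(a), and note $\abs{\underline{\mathbf{W}}_h}_{S_h} \le \tnorm{(\underline{\mathbf{W}}_h, \underline{\mathbf{Y}}_h)}$. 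Both contributions are thus of the required order.

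The heart of the proof is the $A$-term. First I would record the exact consistency $A[\mathbf{U}, \underline{\mathbf{Y}}_h] = 0$: with $\mathbf{U} = (u, \partial_t u)$ the coupling term $(\partial_t u_1 - u_2, \underline{y}_2)_{Q^n}$ vanishes identically, while integrating $a(u, \underline{y}_1)_{Q^n}$ by parts in space produces exactly the boundary flux $(\nabla u \cdot \mathbf{n}, \underline{y}_1)_{\Sigma^n}$ that cancels the last term of $A$, leaving $\sum_n (\partial_t^2 u - \Delta u, \underline{y}_1)_{Q^n} = (\Box u, \underline{y}_1)_{\STdom} = 0$. No temporal boundary contributions arise, because $A$ contains no integration by parts in time. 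Hence $A[\mathbf{\Pi}_h\mathbf{U}, \underline{\mathbf{Y}}_h] = -A[\boldsymbol{\eta}, \underline{\mathbf{Y}}_h]$ with $\boldsymbol{\eta} := \mathbf{U} - \mathbf{\Pi}_h\mathbf{U}$, and since $\underline{\mathbf{Y}}_h$ is measured only through $\norm{\cdot}_{S_h^{\ast}}$ in the triple norm — which does \emph{not} control temporal derivatives of $\underline{y}_1,\underline{y}_2$ — I must keep all derivatives on $\boldsymbol{\eta}$. This is precisely what the continuity estimate Lemma~\ref{lem:bfi_stab_est}(b) delivers: it bounds $A[\boldsymbol{\eta}, \underline{\mathbf{Y}}_h]$ by the interpolation-error seminorm of $\boldsymbol{\eta}$ times $\norm{\underline{\mathbf{Y}}_h}_{S_h^{\ast}} \le \tnorm{(\underline{\mathbf{W}}_h, \underline{\mathbf{Y}}_h)}$. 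The pieces $\norm{\nabla\eta_1}_{Q^n}$, $\norm{\partial_t\eta_1}_{Q^n}$ and the element-wise $h\norm{\eta_1}_{H^2(K)}$, with $\eta_1 = u - \Pi_h u$, are controlled by Lemma~\ref{lem:interp}(a),(b) at order $h^s\norm{u}_{H^{s+1}(\STdom)}$.

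The main obstacle sits in this last step: the seminorm of Lemma~\ref{lem:bfi_stab_est}(b) also contains $\norm{\partial_t\eta_2}_{Q^n}$ with $\eta_2 = \partial_t u - \Pi_h(\partial_t u)$, i.e.\ a full first-order-in-time interpolation error of the \emph{second} component $\partial_t u$. Estimating it via Lemma~\ref{lem:interp}(a) applied to the function $\partial_t u$ is the delicate point, and it is this term that fixes the Sobolev index of $u$ entering the right-hand side; I would treat it with particular care, as it is the regularity bottleneck of the whole estimate (in contrast to the first component, where $H^{s+1}$ suffices). Once all three contributions are assembled, the dominant order is $h^s$ — the data contribution at $h^{s+1}$ being subsumed — which yields the claim.
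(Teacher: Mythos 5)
Your proposal is correct and follows essentially the same route as the paper's proof: Cauchy--Schwarz together with Lemma~\ref{lem:interp}(a) and Lemma~\ref{lem:interp_in_stab}(a) for the data and stabilization terms, and for the $A$-term the Galerkin consistency $A[\mathbf{\Pi}_h \mathbf{U}, \underline{\mathbf{Y}}_h] = A[\mathbf{\Pi}_h \mathbf{U} - \mathbf{U}, \underline{\mathbf{Y}}_h]$ (which you verify explicitly, while the paper only states it) combined with the continuity bound of Lemma~\ref{lem:bfi_stab_est}(b) and interpolation. The ``regularity bottleneck'' you flag is genuine but is shared by the paper's own sketch: bounding $\norm{\partial_t(\partial_t u - \Pi_h \partial_t u)}_{L^2(\STdom)}$ via Lemma~\ref{lem:interp}(a) costs $h^{s}\norm{u}_{H^{s+2}(\STdom)}$ rather than the stated $h^{s}\norm{u}_{H^{s+1}(\STdom)}$, and in any case only the $H^{s+2}$ form of the bound is invoked downstream in Proposition~\ref{prop:triple_norm_fully_disc_conv}.
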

\begin{proof}
The estimate for $(u -  \Pi_h u,\underline{w}_1)_{\STdata} -  S_h(\mathbf{\Pi}_h \mathbf{U}, \underline{\mathbf{W}}_h) $ follows immediately from Lemma \ref{lem:interp_in_stab} (a) and interpolation results. To estimate the remaining term, we use that as $\mathbf{U}$ solves \eqref{eq:PDE+data_constraint} we have $A[ \mathbf{\Pi}_h \mathbf{U} , \underline{\mathbf{Y}}_h ] = A[ \mathbf{\Pi}_h \mathbf{U} -\mathbf{U} , \underline{\mathbf{Y}}_h ]$. The result then follows by combining Lemma~\ref{lem:bfi_stab_est} (b) with the interpolation bounds. We refer to \cite[Lemma 7]{BP24_wave} for more details.
\end{proof}
The next lemma is an important step in controlling the PDE residual $\norm{\Box \underline{u}_1 }_{H^{-1}(\STdom)}$. Note that condition \eqref{eq:optimality_dual} holds for solutions of the variational formulation \eqref{eq:opt_compact_fully_disc}.
\begin{lemma}\label{lem:res-bound}
Let $ ( \underline{\mathbf{U}}_h, \underline{\mathbf{Z}}_h) \in (\ProdFullyDiscrSpace{k}{q}, \ProdFullyDiscrSpace{ k_{\ast } }{ q_{\ast} }) $ such that 
\begin{equation}\label{eq:optimality_dual}
A[ \underline{\mathbf{U}}_h, \mathbf{\Pi}_h^{\ast} \mathbf{W} ] = S_h^{\ast}(  \mathbf{\Pi}_h^{\ast} \mathbf{W}  , \underline{\mathbf{Z}}_h)  
\end{equation}
for any $ \mathbf{W} = (w_1,w_2) = (w,0)$ with $w \in H^1_0(\STdom)$ holds true. Then for  $ \mathbf{U} = (u, \partial_t u)$ with $u$ being the solution of \eqref{eq:PDE+data_constraint} we have: 
\begin{align*}
& \sum\limits_{n=0}^{N-1} \{ (\partial_t \underline{u}_1, \partial_t w_1)_{ Q^n } - a(\underline{u}_1,w_1)_{Q^n} \} \\ 
& \lesssim \left\vert \sum\limits_{n=1}^{N-1} ( \jump{\underline{u}_2^n}, w_{1,+}^n  )_{\Omega} \right\vert +  \norm{w}_{H^1(\STdom)} \left(  \tnorm{ ( \underline{\mathbf{U}}_h - \mathbf{\Pi}_h \mathbf{U} ,\underline{\mathbf{Z}}_h )   }  +  \abs{ \mathbf{\Pi}_h \mathbf{U} }_{ S_h}  \right). 
\end{align*}
\end{lemma}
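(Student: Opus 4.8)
The plan is to compare the target expression with the bilinear form $A$ evaluated at the test tuple $\mathbf{W}=(w,0)$, exploiting a cancellation of the stiffness terms, and then to use the optimality condition \eqref{eq:optimality_dual} to control the residual without losing powers of $h$. Write $w:=w_1\in H^1_0(\STdom)$ and set
\[
T := \sum_{n=0}^{N-1}\{(\partial_t \underline{u}_1,\partial_t w)_{Q^n} - a(\underline{u}_1,w)_{Q^n}\}.
\]
First I would add $A[\underline{\mathbf{U}}_h,(w,0)]$ to $T$: the two occurrences of $a(\underline{u}_1,w)$ cancel, and the lateral contribution $(\nabla \underline{u}_1\cdot\mathbf{n},w)_{\Sigma^n}$ vanishes because $w|_{\Sigma}=0$. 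Integrating the surviving term $(\partial_t \underline{u}_2,w)_{Q^n}$ by parts in time on each slab and telescoping the endpoint contributions (using that $w$ is continuous across the interior time nodes and $w(t_0,\cdot)=w(t_N,\cdot)=0$) produces jump contributions at the interior nodes, yielding
\begin{align*}
T = -A[\underline{\mathbf{U}}_h,(w,0)] + \sum_{n=0}^{N-1}(\partial_t \underline{u}_1 - \underline{u}_2,\partial_t w)_{Q^n} - \sum_{n=1}^{N-1}(\jump{\underline{u}_2^n}, w_{1,+}^n)_{\Omega}.
\end{align*}
(Equivalently one may invoke the integration-by-parts identity of Lemma \ref{lem:IP-norm}.) The last sum is precisely the explicit term on the right-hand side of the claim, while the middle sum is bounded by Cauchy--Schwarz and the $I_0$-part of the stabilization, $\StabDt{\underline{\mathbf{U}}_h}{\underline{\mathbf{U}}_h}^{1/2}\lesssim\abs{\underline{\mathbf{U}}_h}_{S_h}$, against $\norm{w}_{H^1(\STdom)}$.

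The crux is to bound $A[\underline{\mathbf{U}}_h,(w,0)]$ without incurring an inverse power of $h$, and this is where the hypothesis \eqref{eq:optimality_dual} enters. I would split $(w,0)=\mathbf{\Pi}_h^{\ast}(w,0)+\big((w,0)-\mathbf{\Pi}_h^{\ast}(w,0)\big)$. For the discrete part, \eqref{eq:optimality_dual} gives $A[\underline{\mathbf{U}}_h,\mathbf{\Pi}_h^{\ast}(w,0)] = S_h^{\ast}(\mathbf{\Pi}_h^{\ast}(w,0),\underline{\mathbf{Z}}_h)$, which by Cauchy--Schwarz in $S_h^{\ast}$, Lemma \ref{lem:interp_in_stab} (c), and $\norm{\underline{\mathbf{Z}}_h}_{S_h^{\ast}}\le\tnorm{(\underline{\mathbf{U}}_h-\mathbf{\Pi}_h\mathbf{U},\underline{\mathbf{Z}}_h)}$ is bounded by $\norm{w}_{H^1(\STdom)}\,\tnorm{(\underline{\mathbf{U}}_h-\mathbf{\Pi}_h\mathbf{U},\underline{\mathbf{Z}}_h)}$. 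For the interpolation-error part I would apply Lemma \ref{lem:bfi_stab_est} (a) with $\mathbf{Y}=(w-\Pi_h^{\ast}w,0)$; its bound contains the factor $\big(\sum_n h^{-2}\norm{w-\Pi_h^{\ast}w}_{Q^n}^2 + \norm{\nabla(w-\Pi_h^{\ast}w)}_{Q^n}^2\big)^{1/2}$, and the dangerous $h^{-1}$ is exactly absorbed by Lemma \ref{lem:interp} (a) with $\ell=0$, leaving $\lesssim\abs{\underline{\mathbf{U}}_h}_{S_h}\,\norm{w}_{H^1(\STdom)}$.

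It then remains to collect the estimates: combining the displayed identity with the triangle inequality $\abs{\underline{\mathbf{U}}_h}_{S_h}\le\tnorm{(\underline{\mathbf{U}}_h-\mathbf{\Pi}_h\mathbf{U},\underline{\mathbf{Z}}_h)}+\abs{\mathbf{\Pi}_h\mathbf{U}}_{S_h}$ yields exactly
\[
T \le \Big|\sum_{n=1}^{N-1}(\jump{\underline{u}_2^n}, w_{1,+}^n)_{\Omega}\Big| + \norm{w}_{H^1(\STdom)}\big(\tnorm{(\underline{\mathbf{U}}_h-\mathbf{\Pi}_h\mathbf{U},\underline{\mathbf{Z}}_h)} + \abs{\mathbf{\Pi}_h\mathbf{U}}_{S_h}\big).
\]
I expect the main obstacle to be precisely the $h^{-1}$ scaling in Lemma \ref{lem:bfi_stab_est} (a): a naive estimate of $A[\underline{\mathbf{U}}_h,(w,0)]$, or of the CIP/GLS residuals tested against the rough function $w$, would produce $h^{-1}\norm{w}_{\STdom}$, which is incompatible with the target $\norm{w}_{H^1(\STdom)}$. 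The interpolation/optimality split described above is exactly what restores the correct scaling, and it is also the reason the lemma is stated in terms of the interpolant $\mathbf{\Pi}_h\mathbf{U}$ rather than $\underline{\mathbf{U}}_h$ directly.
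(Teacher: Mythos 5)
Your proof is correct and follows essentially the same route as the paper: the paper's proof (deferring details to \cite[Lemma 8]{BP24_wave}) rests on exactly the ingredients you deploy, namely integration by parts in time to expose the $\jump{\underline{u}_2^n}$ terms, the splitting of $(w,0)$ via $\mathbf{\Pi}_h^{\ast}$ so that the optimality condition \eqref{eq:optimality_dual} together with Lemma~\ref{lem:interp_in_stab}~(c) handles the discrete part, and Lemma~\ref{lem:bfi_stab_est}~(a) with the interpolation estimates of Lemma~\ref{lem:interp}~(a) absorbing the $h^{-1}$ weight on the interpolation-error part. Your reconstruction of the details, including the final triangle inequality $\abs{\underline{\mathbf{U}}_h}_{S_h}\le\tnorm{(\underline{\mathbf{U}}_h-\mathbf{\Pi}_h\mathbf{U},\underline{\mathbf{Z}}_h)}+\abs{\mathbf{\Pi}_h\mathbf{U}}_{S_h}$, is accurate.
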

\begin{proof}
This follows basically from integration by parts,  Lemma~\ref{lem:bfi_stab_est} (a) and  Lemma~\ref{lem:interp_in_stab} (c). We refer to see \cite[Lemma 8]{BP24_wave} for the details.
\end{proof}

\section{Error analysis}\label{section:error-analysis}
In this section we derive a convergence result for the finite element method defined in Section \ref{ssection:fully-discrete-method}. We first show convergence of the discretization error in a rather weak norm carried by the stabilization.

\begin{proposition}\label{prop:triple_norm_fully_disc_conv}
Let $u$ be a sufficiently regular solution of \eqref{eq:PDE+data_constraint} and set $\mathbf{U} := (u,\partial_t u) $. Let $ (\underline{\mathbf{U}}_h ,\underline{\mathbf{Z}}_h) \in \ProdFullyDiscrSpace{k}{q} \times \ProdFullyDiscrSpace{ k_{\ast} }{ q_{\ast} } $ 
be the solution of \eqref{eq:opt_compact_fully_disc}. Then for $s = \min{ \{ q,k \} }$ it holds that  
\[
\tnorm{ (\underline{\mathbf{U}}_h - \mathbf{\Pi}_h \mathbf{U} , \underline{\mathbf{Z}}_h ) } \lesssim  h^{s}  \norm{u}_{ H^{s+2}(\STdom)} + \norm{ \delta u }_{L^2(\STdata) }.  
\]
\end{proposition}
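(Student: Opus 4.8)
The plan is to run a Strang-type argument built on the discrete inf-sup stability \eqref{eq:inf-sup}. Since $\underline{\mathbf{U}}_h - \mathbf{\Pi}_h \mathbf{U}$ again lies in $\ProdFullyDiscrSpace{k}{q}$, applying \eqref{eq:inf-sup} to the pair $(\underline{\mathbf{U}}_h - \mathbf{\Pi}_h \mathbf{U}, \underline{\mathbf{Z}}_h)$ reduces the claim to bounding $B[(\underline{\mathbf{U}}_h - \mathbf{\Pi}_h \mathbf{U}, \underline{\mathbf{Z}}_h), (\underline{\mathbf{W}}_h, \underline{\mathbf{Y}}_h)]$ by $\bigl(h^s \norm{u}_{H^{s+2}(\STdom)} + \norm{\delta u}_{L^2(\STdata)}\bigr)\,\tnorm{(\underline{\mathbf{W}}_h, \underline{\mathbf{Y}}_h)}$, uniformly over test tuples $(\underline{\mathbf{W}}_h, \underline{\mathbf{Y}}_h) \in \ProdFullyDiscrSpace{k}{q} \times \ProdFullyDiscrSpace{k_\ast}{q_\ast}$. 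First I would use bilinearity to split $B[(\underline{\mathbf{U}}_h - \mathbf{\Pi}_h \mathbf{U}, \underline{\mathbf{Z}}_h), \cdot] = B[(\underline{\mathbf{U}}_h, \underline{\mathbf{Z}}_h), \cdot] - B[(\mathbf{\Pi}_h \mathbf{U}, 0), \cdot]$, and replace the first summand using the Galerkin identity \eqref{eq:opt_compact_fully_disc}, namely $B[(\underline{\mathbf{U}}_h, \underline{\mathbf{Z}}_h), (\underline{\mathbf{W}}_h, \underline{\mathbf{Y}}_h)] = (\tilde u_\omega, \underline{w}_1)_{\STdata}$.

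Next I would expand $B[(\mathbf{\Pi}_h \mathbf{U}, 0), \cdot]$ directly from the definition \eqref{eq:complete_bfi_def_fully_discr}; the two terms carrying the vanishing dual argument, $A[\underline{\mathbf{W}}_h, 0]$ and $S_h^\ast(\underline{\mathbf{Y}}_h, 0)$, drop out. Using $\tilde u_\omega = u + \delta u$ on $\STdata$ and that the first component of $\mathbf{\Pi}_h \mathbf{U}$ is $\Pi_h u$, the difference collapses to
\[
(u - \Pi_h u, \underline{w}_1)_{\STdata} - S_h(\mathbf{\Pi}_h \mathbf{U}, \underline{\mathbf{W}}_h) - A[\mathbf{\Pi}_h \mathbf{U}, \underline{\mathbf{Y}}_h] - \StabTimeJumps{\mathbf{\Pi}_h \mathbf{U}}{\underline{\mathbf{W}}_h} + (\delta u, \underline{w}_1)_{\STdata}.
\]
The first three terms are precisely those estimated by the consistency bound (Lemma~\ref{lem:consistency-bound}), contributing $h^s \norm{u}_{H^{s+1}(\STdom)} \tnorm{(\underline{\mathbf{W}}_h, \underline{\mathbf{Y}}_h)}$, while the noise term is handled by Cauchy--Schwarz together with $\norm{\underline{w}_1}_{\STdata} \le \tnorm{(\underline{\mathbf{W}}_h, \underline{\mathbf{Y}}_h)}$, giving $\norm{\delta u}_{L^2(\STdata)} \tnorm{(\underline{\mathbf{W}}_h, \underline{\mathbf{Y}}_h)}$.

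The one contribution requiring extra care, and which is not covered by Lemma~\ref{lem:consistency-bound}, is the time-jump term $\StabTimeJumps{\mathbf{\Pi}_h \mathbf{U}}{\underline{\mathbf{W}}_h}$. Because the exact solution is continuous across time slabs, this term is a pure interpolation defect, so I would estimate it by Cauchy--Schwarz for the positive semidefinite form $S^{\uparrow\downarrow}_h$, obtaining $\abs{\mathbf{\Pi}_h \mathbf{U}}_{\uparrow\downarrow}\,\abs{\underline{\mathbf{W}}_h}_{\uparrow\downarrow}$, and then invoke Lemma~\ref{lem:interp_in_stab}(a), which supplies $\abs{\mathbf{\Pi}_h \mathbf{U}}_{\uparrow\downarrow} \lesssim h^s \norm{u}_{H^{s+2}(\STdom)}$, together with $\abs{\underline{\mathbf{W}}_h}_{\uparrow\downarrow} \le \tnorm{(\underline{\mathbf{W}}_h, \underline{\mathbf{Y}}_h)}$. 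This is exactly the step that forces the stronger norm $\norm{u}_{H^{s+2}(\STdom)}$, rather than $H^{s+1}$, into the final estimate, and it is the place where I expect the only genuine subtlety to reside; everything else is bookkeeping. Collecting the three contributions, dividing by $\tnorm{(\underline{\mathbf{W}}_h, \underline{\mathbf{Y}}_h)}$, and taking the supremum then yields the asserted bound.
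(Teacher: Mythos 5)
Your proposal is correct and takes essentially the same route as the paper's proof: the identical expansion of $B[(\underline{\mathbf{U}}_h - \mathbf{\Pi}_h \mathbf{U}, \underline{\mathbf{Z}}_h), (\underline{\mathbf{W}}_h,\underline{\mathbf{Y}}_h)]$ into the five terms via the Galerkin identity \eqref{eq:opt_compact_fully_disc}, the same use of Lemma~\ref{lem:consistency-bound} for the consistency terms and Lemma~\ref{lem:interp_in_stab}(a) for the time-jump term (which, as you correctly note, is what forces the $H^{s+2}$ norm), and the same conclusion via the inf-sup condition \eqref{eq:inf-sup}. The paper's proof is simply a compressed version of your bookkeeping.
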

\begin{proof}
From \eqref{eq:opt_compact_fully_disc}-\eqref{eq:complete_bfi_def_fully_discr} and the estimates in Lemmas~\ref{lem:interp_in_stab} (a) and Lemma~\ref{lem:consistency-bound} we obtain
\begin{align*}
B[ ( \underline{\mathbf{U}}_h - \mathbf{\Pi}_h \mathbf{U}, \underline{\mathbf{Z}}_h ), ( \underline{\mathbf{W}}_h ,\underline{\mathbf{Y}}_h ) ] 
= &  (u -  \Pi_h u,\underline{w}_1)_{\STdata} - S_h(\mathbf{\Pi}_h \mathbf{U}, \underline{\mathbf{W}}_h) - A[ \mathbf{\Pi}_h \mathbf{U} , \underline{\mathbf{Y}}_h] \\
	& \!\! - \StabTimeJumps{ \mathbf{\Pi}_h \mathbf{U}}{\underline{\mathbf{W}}_h } + (\delta u,\underline{w}_1)_{\STdata}  \\
  & \lesssim  \left( h^{s}  \norm{u}_{ H^{s+2}(\STdom) } + \norm{ \delta u }_{L^2(\STdata) } \right) \tnorm{ (\underline{\mathbf{W}}_h, \underline{\mathbf{Y}}_h) }.
\end{align*}
The claim now follows from the inf-sup condition \eqref{eq:inf-sup}.
\end{proof}
The last step of the analysis consists of boosting the result from Proposition~\ref{prop:triple_norm_fully_disc_conv} by means of the stability estimates from Section \ref{section:stab} to obtain convergence in physically interesting norms. As the stability result applies to global $H^1$-functions only, we first need to account for potential discontinuities of $\underline{u}_1$ across time-slab boundaries. To this end, we introduce a lifting operator $L_{h}:\FullyDiscrSpace{k}{q} \rightarrow C^0((0,T);V_h^k)$ as follows. 
Let $\vartheta_{n}(t) := (t_{n+1} - t)/(t_{n+1} - t_n )$  for $ t \in I_n$ and set 
\begin{equation}\label{eq:def_lifting}
L_{h} \underline{w}(t) = \underline{w}(t) - \jump{ \underline{w}^n }  \vartheta_{n}(t), \quad  t \in I_n, n \geq 1, \quad
	L_{h} \underline{w}(t) = \underline{w}(t),  \quad t \in I_0.
\end{equation}
We have $\vartheta_{n}(t_n) = 1$ and $ \vartheta_{n}(t_{n+1} ) =  0$,  
which implies 
\[
\lim_{ t \downarrow t_n} L_{h} \underline{w}(t)  
= \underline{w}_{+}^{n} - \jump{ \underline{w}^n }
= \underline{w}_{-}^{n} 
= \lim_{ t \uparrow t_n} L_{h} \underline{w}(t).
\]
Hence, $ L_{h} \underline{w}$ is continuous in time. For later purposes we also record that 
due to $\abs{t_{n+1} - t_n} \sim h$ we have
\begin{equation}\label{eq:theta-and-deriv-L2-norm}
\norm{ \vartheta_{n}}_{ L^2(I_n) } \sim h^{1/2},
\qquad 
\norm{ \vartheta_{n}^{\prime} }_{ L^2(I_n) } \sim  h^{-1/2}. 
\end{equation}

Before we can establish the main convergence result we first prove an auxiliary lemma 
which allows us to control the PDE residual of the lifted error in the $H^{-1}$ norm.
\begin{lemma}\label{lem:PDE-res-bound}
Let $u$ be a sufficiently regular solution of \eqref{eq:PDE+data_constraint} and $ (\underline{\mathbf{U}}_h ,\underline{\mathbf{Z}}_h) \in \ProdFullyDiscrSpace{k}{q} \times \ProdFullyDiscrSpace{ k_{\ast} }{ q_{\ast} } $ be the solution of \eqref{eq:opt_compact_fully_disc}. 
Then for $\tilde{e}_h := u - L_{h} \underline{u}_1$ and  $s = \min{ \{ q,k \} }$ it holds that
\begin{equation}\label{eq:PDE-res-bound}
\norm{ \tilde{e}_h }_{L^2(\STdata)} + \norm{\tilde{e}_h }_{H^{-1}(\STdom)} \lesssim
h^{s}  \norm{u}_{ H^{s+2}(\STdom)} + \norm{ \delta u }_{L^2(\STdata) }.
\end{equation}
\end{lemma}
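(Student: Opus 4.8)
The quantity $\norm{\tilde e_h}_{H^{-1}(\STdom)}$ in \eqref{eq:PDE-res-bound} is to be read as the PDE residual $\norm{\Box\tilde e_h}_{H^{-1}(\STdom)}$, consistent with the lemma's stated purpose and with the subsequent application of the stability estimate \eqref{eq:stab-finite-dim-trace} (resp.\ \eqref{eq:Hoelder-stab}) to $\tilde e_h$. The plan is to reduce both terms on the left to the jump seminorm $\abs{\underline{\mathbf{U}}_h}_{\uparrow\downarrow}$ and to the triple norm of the discrete error $\underline{\mathbf{U}}_h - \mathbf{\Pi}_h\mathbf{U}$. I would first record the basic bound
\[
\abs{\underline{\mathbf{U}}_h}_{\uparrow\downarrow} \le \abs{\underline{\mathbf{U}}_h - \mathbf{\Pi}_h\mathbf{U}}_{\uparrow\downarrow} + \abs{\mathbf{\Pi}_h\mathbf{U}}_{\uparrow\downarrow} \lesssim h^s\norm{u}_{H^{s+2}(\STdom)} + \norm{\delta u}_{L^2(\STdata)},
\]
which follows from Proposition~\ref{prop:triple_norm_fully_disc_conv} (the seminorm $\abs{\cdot}_{\uparrow\downarrow}$ being part of $\tnorm{\cdot}$) together with Lemma~\ref{lem:interp_in_stab}(a). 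Every estimate below is expressed through this right-hand side.

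For the data term I split $\tilde e_h = (u - \underline{u}_1) + (\underline{u}_1 - L_h\underline{u}_1)$. The first piece is controlled on $\STdata$ directly by Lemma~\ref{lem:interp_in_stab}(b), whose right-hand side is $\lesssim h^s\norm{u}_{H^{s+2}(\STdom)} + \norm{\delta u}_{L^2(\STdata)}$ after invoking Proposition~\ref{prop:triple_norm_fully_disc_conv}. For the lifting correction I use \eqref{eq:def_lifting}: on each slab $I_n$ with $n\ge 1$ it equals $\jump{\underline{u}_1^n}\vartheta_n$, so by \eqref{eq:theta-and-deriv-L2-norm} its $L^2(Q^n)$ norm is $\sim h^{1/2}\norm{\jump{\underline{u}_1^n}}_\Omega$; summing and using $\sum_n h^{-1}\norm{\jump{\underline{u}_1^n}}_\Omega^2 \le \abs{\underline{\mathbf{U}}_h}_{\uparrow\downarrow}^2$ yields an $L^2(\STdom)$ (hence $L^2(\STdata)$) bound of order $h\,\abs{\underline{\mathbf{U}}_h}_{\uparrow\downarrow}$, which is of higher order than required.

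The residual term is the heart of the proof. Since $\tilde e_h \in H^1(\STdom)$ — which is precisely why the lifting $L_h$ was introduced — I characterise $\norm{\Box\tilde e_h}_{H^{-1}(\STdom)}$ by duality against $w\in H^1_0(\STdom)$ with $\norm{w}_{H^1}=1$, using $\langle\Box v,w\rangle = \int_{\STdom}(-\partial_t v\,\partial_t w + \nabla v\cdot\nabla w)$. Exploiting $\Box u = 0$ and writing $L_h\underline{u}_1 = \underline{u}_1 - (\underline{u}_1 - L_h\underline{u}_1)$, I obtain
\[
\langle\Box\tilde e_h,w\rangle = \sum_{n=0}^{N-1}\{(\partial_t\underline{u}_1,\partial_t w)_{Q^n} - a(\underline{u}_1,w)_{Q^n}\} + \int_{\STdom}(-\partial_t r_h\,\partial_t w + \nabla r_h\cdot\nabla w),
\]
where $r_h = \jump{\underline{u}_1^n}\vartheta_n$ on $I_n$. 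The first sum is exactly the left-hand side of Lemma~\ref{lem:res-bound}, which applies because $(\underline{\mathbf{U}}_h,\underline{\mathbf{Z}}_h)$ solves \eqref{eq:opt_compact_fully_disc} and hence satisfies \eqref{eq:optimality_dual}; its right-hand side is $\lesssim \norm{w}_{H^1}(h^s\norm{u}_{H^{s+2}(\STdom)} + \norm{\delta u}_{L^2(\STdata)})$ once the $\tnorm{\cdot}$ and $\abs{\mathbf{\Pi}_h\mathbf{U}}_{S_h}$ factors are bounded as above. For the correction integral I differentiate $r_h$ slabwise and apply \eqref{eq:theta-and-deriv-L2-norm}, producing factors $h^{-1/2}\norm{\jump{\underline{u}_1^n}}_\Omega$ from the time derivative and $h^{1/2}\norm{\nabla\jump{\underline{u}_1^n}}_\Omega$ from the gradient; Cauchy--Schwarz in $n$, together with the weights $h^{-1}$ and $h$ carried by $\abs{\underline{\mathbf{U}}_h}_{\uparrow\downarrow}^2$, makes the $h$-powers cancel and leaves $\lesssim \abs{\underline{\mathbf{U}}_h}_{\uparrow\downarrow}\norm{w}_{H^1}$.

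The main obstacle is the fractional-$h$ bookkeeping, concentrated in the stray term $\abs{\sum_n(\jump{\underline{u}_2^n},w_+^n)_\Omega}$ left over from Lemma~\ref{lem:res-bound}. Since $w$ is a generic $H^1_0$-function, I need a discrete time-trace inequality $\norm{w_+^n}_\Omega^2 \lesssim h^{-1}\norm{w}_{Q^n}^2 + h\norm{\partial_t w}_{Q^n}^2$, giving $(\sum_n\norm{w_+^n}_\Omega^2)^{1/2}\lesssim h^{-1/2}\norm{w}_{H^1}$; paired with $(\sum_n\norm{\jump{\underline{u}_2^n}}_\Omega^2)^{1/2}\le h^{1/2}\abs{\underline{\mathbf{U}}_h}_{\uparrow\downarrow}$, the half-powers again cancel. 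Assembling the residual bound, the data bound, and the estimate for $\abs{\underline{\mathbf{U}}_h}_{\uparrow\downarrow}$ then gives \eqref{eq:PDE-res-bound}. The only genuinely delicate points are making all these fractional powers of $h$ match exactly, and verifying $\tilde e_h \in H^1(\STdom)$ so that both the duality pairing and the cancellation via $\Box u = 0$ are legitimate.
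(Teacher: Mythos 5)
Your proof is correct and follows essentially the same route as the paper's: the same (correct) reading of $\norm{\tilde e_h}_{H^{-1}(\STdom)}$ as the PDE residual $\norm{\Box \tilde e_h}_{H^{-1}(\STdom)}$ tested against $w \in H^1_0(\STdom)$, the same decomposition of $L_h\underline{u}_1$ isolating the lifting correction, the same invocation of Lemma~\ref{lem:res-bound} via \eqref{eq:optimality_dual}, and the same control of the jump terms and data term through \eqref{eq:theta-and-deriv-L2-norm}, Proposition~\ref{prop:triple_norm_fully_disc_conv} and Lemma~\ref{lem:interp_in_stab}. Your explicit time-trace inequality for handling $\sum_n(\jump{\underline{u}_2^n},w_+^n)_\Omega$ is just a self-contained version of the bookkeeping the paper delegates to its cited reference.
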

\begin{proof}
The proof is very similar to the one of \cite[Theorem 14]{BP24_wave}. Hence, we only provide 
a sketch for completeness.\\ 
For any $w \in H_0^1(\STdom)$ we have using  $\Box u = 0$, Lemma \ref{lem:res-bound}, Proposition \ref{prop:triple_norm_fully_disc_conv} and Lemma \ref{lem:interp_in_stab}(a)
\begin{align*}
	& \int\limits_{Q} ( - \partial_t \tilde{e}_h \partial_t w + \nabla \tilde{e}_h  \nabla w)
 =   \sum\limits_{n=0}^{N-1} \{ (\partial_t \underline{u}_1, \partial_t w_1)_{ Q^n } - a(\underline{u}_1,w_1)_{Q^n} \} + \xi( \underline{u}_1, w) \\
	& \lesssim ( h^{s}  \norm{u}_{ H^{s+2}(\STdom)  } + \norm{ \delta u }_{L^2(\STdata) } )  \norm{w}_{H^1(\STdom)} +  \xi( \underline{u}_1, w) + \eta( \underline{u}_2, w), 
\end{align*}
where   
\begin{align*}
\xi( \underline{u}_1, w) :=  \sum\limits_{n=1}^{N-1} \{ a( \vartheta_{n} \jump{ \underline{u}_1^n } , w  )_{Q^n}  -( \vartheta^{\prime}_{n} \jump{ \underline{u}_1^n } , \partial_t w  )_{Q^n} \}, \;  
\eta( \underline{u}_2, w) := \left\vert \sum\limits_{n=1}^{N-1} ( \jump{\underline{u}_2^n}, w_{+}^n  )_{\Omega} \right\vert. 
\end{align*}
These terms can be controlled using \eqref{eq:theta-and-deriv-L2-norm}, Lemma \ref{lem:interp} (c)-(d) and an inverse inequality in time to arrive at 
\begin{align*}
\xi( \underline{u}_1, w) + \eta( \underline{u}_2, w) \lesssim  \abs{ \underline{\mathbf{U}}_h }_{ \uparrow \downarrow } \norm{ w }_{ H^1(\STdom) } \lesssim 
( h^{s}  \norm{u}_{ H^{s+2}(\STdom)} + \norm{ \delta u }_{L^2(\STdata) } ) \norm{ w }_{ H^1(\STdom) },
\end{align*}
where the last step uses Proposition \ref{prop:triple_norm_fully_disc_conv} and Lemma \ref{lem:interp_in_stab} (a). This implies the bound \eqref{eq:PDE-res-bound} for $\norm{\tilde{e}_h }_{H^{-1}(\STdom)}$. The data fitting term can be bounded as well using \eqref{eq:theta-and-deriv-L2-norm}, Lemma \ref{lem:interp_in_stab} (b) and Proposition \ref{prop:triple_norm_fully_disc_conv}: 
\begin{align*}
\norm{ \tilde{e}_h }_{L^2(\STdata)} & \lesssim \norm{ \underline{u}_1 - L_{h} \underline{u}_1  }_{L^2(\STdata)} + \norm{  u - \underline{u}_1  }_{L^2(\STdata)} \\ 
	& \lesssim h \abs{ \underline{\mathbf{U}}_h }_{ \uparrow \downarrow }  + h^{ s+1 }  \norm{u}_{ H^{s+1}(\STdom)} + \tnorm{ ( \underline{\mathbf{U}}_h - \mathbf{\Pi}_h \mathbf{U},0) }_h \\
	& \lesssim h^{s}  \norm{u}_{ H^{s+2}(\STdom)} + \norm{ \delta u }_{L^2(\STdata) }. 
\end{align*}
\end{proof}
Now we are in a position to establish the main result. 
\begin{theorem}\label{thm:error-estimate-fully-discrete}
Let $u$ be a sufficiently regular solution of \eqref{eq:PDE+data_constraint} and $ (\underline{\mathbf{U}}_h ,\underline{\mathbf{Z}}_h) \in \ProdFullyDiscrSpace{k}{q} \times \ProdFullyDiscrSpace{ k_{\ast} }{ q_{\ast} } $ be the solution of \eqref{eq:opt_compact_fully_disc}. Moreover, let  $s = \min{ \{ q,k \} }$.  
\begin{enumerate}[label=(\alph*)]
\item Assume that no information on $u|_{\Sigma}$ is known, i.e.\ $u \in \FiniteDimSpace = L^2(\Sigma)$ and $Q = 0$. Then under the condition $\gamma > 0$ the following error estimate for $B$ as defined in \eqref{eq:B-def-hoelder} holds true 
\begin{equation}\label{eq:error-estimate-Hoelder}
\norm{ u - \underline{u}_1 }_{L^2(B)} \lesssim  h^{\alpha s} \left( \norm{u}_{ H^{s+2}(\STdom)  } + h^{-s} \norm{ \delta u }_{L^2(\STdata) }  \right).
\end{equation}
\item Assume that we are given a finite dimensional space $\FiniteDimSpace \subset L^2(\Sigma)$ such that $u \in  \FiniteDimSpace$ and that assumption \eqref{eq:assum_2}  and the geometric control condition hold true. Then we have the error estimate 
\begin{align}
& \norm{ u - L_h \underline{u}_1 }_{L^{\infty}([0,T];L^2(\Omega))} + \norm{\p_t (u - L_h \underline{u}_1)   }_{L^{2}([0,T];H^{-1}(\Omega))} \nonumber \\
 & \lesssim h^{s}  \norm{u}_{ H^{s+2}(\STdom)} + \norm{ \delta u }_{L^2(\STdata) } \label{eq:error-estimate-Lipschitz}.
\end{align}
\end{enumerate}
\end{theorem}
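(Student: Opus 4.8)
The plan is to apply the relevant continuous stability estimate from Section~\ref{section:stab} to the \emph{lifted} error $\tilde{e}_h := u - L_h \underline{u}_1$, which, unlike $u - \underline{u}_1$, belongs to $H^1(\STdom)$ because $L_h \underline{u}_1$ is continuous across time-slab interfaces; the stability estimates require a global $H^1$-function. All three terms on the right-hand side of either estimate are then controlled through the already-established Proposition~\ref{prop:triple_norm_fully_disc_conv} and Lemma~\ref{lem:PDE-res-bound}. Throughout I abbreviate $A := h^{s}\norm{u}_{H^{s+2}(\STdom)} + \norm{\delta u}_{L^2(\STdata)}$, and I record once and for all that, combining those two results with Lemma~\ref{lem:interp_in_stab}~(a) via the triangle inequality (splitting at $\mathbf{\Pi}_h \mathbf{U}$), one has $\abs{\underline{\mathbf{U}}_h}_{S_h} + \abs{\underline{\mathbf{U}}_h}_{\uparrow\downarrow} \lesssim A$.

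\textbf{Part (b).} I would apply the Lipschitz estimate \eqref{eq:stab-finite-dim-trace} to $\tilde{e}_h$. Since $\norm{\Box \tilde{e}_h}_{H^{-1}(\STdom)} + \norm{\tilde{e}_h}_{L^2(\STdata)} \lesssim A$ by Lemma~\ref{lem:PDE-res-bound}, it only remains to treat the boundary term $\norm{\compl \tilde{e}_h}_{L^2(\Sigma)}$. Using $u|_{\Sigma} \in \FiniteDimSpace$, i.e.\ $\compl u = 0$ on $\Sigma$, and the $L^2$-boundedness of $\compl$, I split $\norm{\compl \tilde{e}_h}_{L^2(\Sigma)} = \norm{\compl L_h \underline{u}_1}_{L^2(\Sigma)} \le \norm{\compl \underline{u}_1}_{L^2(\Sigma)} + \norm{L_h \underline{u}_1 - \underline{u}_1}_{L^2(\Sigma)}$. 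The first summand equals $\StabTrace{\underline{\mathbf{U}}_h}{\underline{\mathbf{U}}_h}^{1/2} \le \abs{\underline{\mathbf{U}}_h}_{S_h} \lesssim A$, since $\StabTrace{\cdot}{\cdot}$ is a constituent of $S_h$. For the second I would expand $L_h\underline{u}_1 - \underline{u}_1 = -\jump{\underline{u}_1^n}\vartheta_n$ on $I_n$, use \eqref{eq:theta-and-deriv-L2-norm} together with a trace inequality of the type \eqref{ieq:cont-trace-ieq} applied element-wise to $\jump{\underline{u}_1^n}$, and recognise the outcome as $\lesssim h^{1/2}\abs{\underline{\mathbf{U}}_h}_{\uparrow\downarrow} \lesssim A$. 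Inserting these three bounds into \eqref{eq:stab-finite-dim-trace} gives \eqref{eq:error-estimate-Lipschitz}.

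\textbf{Part (a).} Here $\gamma > 0$ and $\compl = 0$, so the trace term is absent. I would apply the Hölder estimate \eqref{eq:Hoelder-stab} to $\tilde{e}_h$ on $\Qmin$. For the high-stability factor, $\norm{\tilde{e}_h}_{L^2(\STdata)} + \norm{\Box \tilde{e}_h}_{H^{-1}(\Qmin)} \lesssim A$ follows from Lemma~\ref{lem:PDE-res-bound} after noting $\norm{\cdot}_{H^{-1}(\Qmin)} \le \norm{\cdot}_{H^{-1}(\STdom)}$ (extension by zero of test functions from $\Qmin$ to $\STdom$). For the low-stability factor I would bound $\norm{\tilde{e}_h}_{L^2(\Qmin)} \le \norm{\tilde{e}_h}_{L^2(\STdom)} \lesssim C := \norm{u}_{H^{s+2}(\STdom)} + h^{-s}\norm{\delta u}_{L^2(\STdata)}$; the crucial ingredient is the Tikhonov term $\StabTikh{\cdot}{\cdot} = \gamma h^{2s}\norm{\underline{u}_1}_{L^2(\STdom)}^2$, which, being part of $S_h$, yields $\norm{\underline{u}_1}_{L^2(\STdom)} \lesssim h^{-s}\abs{\underline{\mathbf{U}}_h}_{S_h} \lesssim h^{-s}A = C$. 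The decisive algebraic observation is that $A = h^{s} C$, so that $A^{\alpha} C^{1-\alpha} = h^{\alpha s} C$, which is exactly the right-hand side of \eqref{eq:error-estimate-Hoelder}. Finally I would pass from $\tilde{e}_h$ back to $u - \underline{u}_1$ by adding $\norm{L_h\underline{u}_1 - \underline{u}_1}_{L^2(B)} \lesssim h\abs{\underline{\mathbf{U}}_h}_{\uparrow\downarrow} \lesssim hA$, which is absorbed into the stated bound.

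\textbf{Main obstacle.} The insertion of the residual bounds is routine; the delicate points are, in part (b), ensuring that the lifting correction and the complement term interact correctly with the $\abs{\cdot}_{S_h}$ and $\abs{\cdot}_{\uparrow\downarrow}$ seminorms (in particular that the element-wise trace inequality produces the right power of $h$), and, in part (a), the scaling identity $A = h^{s} C$. The latter hinges on the Tikhonov regularisation being tuned precisely to $h^{2s}$, so that the loss of $h^{-s}$ incurred in controlling $\norm{\underline{u}_1}_{L^2(\STdom)}$ is exactly recovered by the Hölder interpolation exponent, leaving the clean rate $h^{\alpha s}$.
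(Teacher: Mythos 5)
Your proposal is correct and follows essentially the same route as the paper: apply the H\"older estimate (Lemma~\ref{lem:stab}) resp.\ the Lipschitz estimate (Theorem~\ref{thm:lipschitz-stab-fin}) to the lifted error $\tilde{e}_h = u - L_h\underline{u}_1$, control the data and $H^{-1}$-residual terms via Lemma~\ref{lem:PDE-res-bound} and Proposition~\ref{prop:triple_norm_fully_disc_conv}, use the Tikhonov term (with the scaling $A = h^s C$) for the low-stability factor in (a) and the trace stabilization $\StabTrace{\cdot}{\cdot}$ plus the time-jump seminorm for $\norm{\compl\tilde{e}_h}_{L^2(\Sigma)}$ in (b), and finish with the triangle inequality bound $\norm{\underline{u}_1 - L_h\underline{u}_1} \lesssim h\abs{\underline{\mathbf{U}}_h}_{\uparrow\downarrow}$. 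Your minor reorganizations --- bounding $\norm{\underline{u}_1}_{L^2(\STdom)}$ directly instead of $\norm{\underline{u}_1 - \Pi_h u}_{L^2(\STdom)}$ in (a), and splitting $\compl L_h\underline{u}_1$ at $\underline{u}_1$ using $\compl u|_\Sigma = 0$ rather than splitting at $\Pi_h u$ in (b) --- are equivalent to the paper's steps and rely on the same ingredients.
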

\begin{proof}
Let us define $\tilde{e}_h := u - L_{h} \underline{u}_1 \in H^1(\STdom)$.
\begin{enumerate}[label=(\alph*)]
\item Applying the stability estimate from Lemma~\ref{lem:stab} to $\tilde{e}_h$ gives 
\begin{align*}
\norm{ \tilde{e}_h }_{L^2(B)} & \lesssim (\norm{ \tilde{e}_h }_{L^2( \STdata  )}  + \norm{\Box  \tilde{e}_h }_{H^{-1}(Q)})^\alpha \norm{ \tilde{e}_h }_{L^2(Q)}^{1-\alpha} \\
& \lesssim ( h^{s}  \norm{u}_{ H^{s+2}(\STdom)} + \norm{ \delta u }_{L^2(\STdata) } )^{\alpha} \norm{ \tilde{e}_h }_{L^2(Q)}^{1-\alpha}, 
\end{align*}
where Lemma \ref{lem:PDE-res-bound} has been used. To estimate $\norm{ \tilde{e}_h }_{L^2(Q)}$, we write $ \tilde{e}_h = (  \underline{u}_1 - L_{\Delta t} \underline{u}_1 ) + (u - \underline{u}_1) $ and estimate the two contributions separately. The first term is easily controlled by using \eqref{eq:theta-and-deriv-L2-norm} and appealing to Proposition~\ref{prop:triple_norm_fully_disc_conv} and Lemma~\ref{lem:interp_in_stab}: 
\begin{align}
\norm{ \underline{u}_1 -  L_{h} \underline{u}_1 }_{ \STdom }
& \lesssim h^{1/2} \left( \sum\limits_{n=1}^{N-1} \norm{ \jump{ \underline{u}_1^n } }_{ \Omega}^2 \right)^{1/2} \nonumber \\ 
	& \lesssim h \abs{ \underline{\mathbf{U}}_h }_{ \uparrow \downarrow } \lesssim  h \left( h^{s} \norm{u}_{ H^{s+2}(\STdom) } +  \norm{ \delta u }_{L^2(\STdata) } \right). \label{eq:discr-jumps-bound}
\end{align}
To bound the second term we require the Tikhonov regularization, i.e. the assumption that $\gamma >0$. Using Proposition~\ref{prop:triple_norm_fully_disc_conv} and interpolation estimates gives 
\begin{align*}
& \norm{ u - \underline{u}_1 }_{L^2(\STdom)} \leq \norm{ u - \Pi_h u }_{L^2(\STdom)} 
	+ h^{-s} h^{s} \norm{ \underline{u}_1 - \Pi_h u }_{ L^2(\STdom) } \\
& \lesssim \norm{ u - \Pi_h u }_{L^2(\STdom)} + h^{-s} \tnorm{ (\underline{\mathbf{U}}_h - \mathbf{\Pi}_h \mathbf{U} , 0 ) } \lesssim \norm{u}_{ H^{s+2}(\STdom)  } + h^{-s} \norm{ \delta u }_{L^2(\STdata) }. 
\end{align*}
Combining these results yields 
\begin{align*}
\norm{ \tilde{e}_h }_{L^2(B)} &\lesssim \left( h^{s} \norm{u}_{ H^{s+2}(\STdom) } + \norm{ \delta u }_{L^2(\STdata) } \right)^{\alpha} \left( \norm{u}_{ H^{s+2}(\STdom) } + h^{-s} \norm{ \delta u }_{L^2(\STdata) } \right)^{1-\alpha}  \\
& \lesssim  h^{\alpha s} \left( \norm{u}_{ H^{s+2}(\STdom)  } + h^{-s} \norm{ \delta u }_{L^2(\STdata) } \right) .
\end{align*}
The proof is concluded by applying the triangle inequality
\[
 \norm{ u - \underline{u}_1 }_{L^2(B)} \leq  \norm{ \tilde{e}_h }_{L^2(B) } + \norm{ \underline{u}_1 - L_{h} \underline{u}_1  }_{L^2(B)}   
\]
and noting that the second term is bounded from above by \eqref{eq:discr-jumps-bound}.
\item Applying Theorem \ref{thm:lipschitz-stab-fin} to $\tilde{e}_h$ and using Lemma \ref{lem:PDE-res-bound} yields 
\[
\norm{ \tilde{e}_h }_{L^{\infty}([0,T];L^2(\Omega))} + \norm{\p_t \tilde{e}_h }_{L^{2}([0,T];H^{-1}(\Omega))}  \lesssim
 h^{s} \norm{u}_{ H^{s+2}(\STdom) } + \norm{ \delta u }_{L^2(\STdata) } + \norm{\compl \tilde{e}_h }_{L^2(\Sigma)}.
\]
It remains to treat $\norm{\compl \tilde{e}_h }_{L^2(\Sigma)}$. We split 
\[
\norm{\compl \tilde{e}_h }_{L^2(\Sigma)} \leq \norm{ \compl (\underline{u}_1 - L_h  \underline{u}_1) }_{L^2(\Sigma)} + \norm{ \compl (u - \underline{u}_1) }_{L^2(\Sigma)}
\]
and treat the two terms separately. Using continuity of $\compl$ and the trace inequality \eqref{ieq:cont-trace-ieq} we obtain 
\begin{align*}
& \norm{  \compl ( \underline{u}_1 - L_h  \underline{u}_1 )  }_{L^2(\Sigma)}^2 
 \lesssim \norm{ \underline{u}_1 - L_h  \underline{u}_1  }_{L^2(\Sigma)}^2 
\lesssim h \sum\limits_{n=1}^{N-1} \sum\limits_{K \in \mathcal{T}_h } \norm{ \jump{ \underline{u}_1^n }  }_{ L^2(\partial \Omega \cap \partial K )  }^2 \\
 & \lesssim h \sum\limits_{n=1}^{N-1} \! \sum\limits_{K \in \mathcal{T}_h } \left( \frac{1}{h} \norm{ \jump{ \underline{u}_1^n }  }_{K }^2 + \! h \norm{ \jump{ \nabla \underline{u}_1^n }  }_{K }^2  \right) \lesssim  h \abs{ \underline{\mathbf{U}}_h }_{ \uparrow \downarrow }^2 \\ 
& \lesssim h \left( h^{s} \norm{u}_{ H^{s+2}(\STdom) } +  \norm{ \delta u }_{L^2(\STdata) } \right)^2.
\end{align*}
The remaining term is controlled by the triple norm: 
\begin{align*}
\norm{ \compl (u - \underline{u}_1) }_{L^2(\Sigma)} & \leq
\norm{ \compl (u - \Pi_h u ) }_{L^2(\Sigma)} + \norm{ \compl (\Pi_h u -  \underline{u}_1) }_{L^2(\Sigma)}  \\
 & \lesssim \norm{ u - \Pi_h u  }_{L^2(\Sigma)} + \tnorm{ (\mathbf{\Pi}_h \mathbf{U} - \underline{\mathbf{U}}_h , 0 ) } \\
 & \lesssim h^{s+1/2} \norm{u}_{H^{s+1}(\STdom)} + h^{s} \norm{u}_{ H^{s+2}(\STdom) } + \norm{ \delta u }_{L^2(\STdata) }, 
\end{align*}
where we appealed again to Proposition~\ref{prop:triple_norm_fully_disc_conv}. This concludes the proof.
\end{enumerate}

\end{proof}

\section{Numerical experiments}\label{section:numexp}
In this section we present experiments to numerically explore the stability landscape of the wave equation and corroborate the theoretical results established in Section \ref{section:stab} and Theorem \ref{thm:error-estimate-fully-discrete}. We implemented the suggested method in the finite element software \texttt{Netgen/NGSolve} \cite{JS97, JS14} using in addition the space-time functionality provided by the add-on \texttt{ngsxfem} \cite{ngsxfem2021}. Reproduction material for the numerical experiments is available at \texttt{zenodo} \cite{repro-BOPZ25}.

The experiments are devided into two parts. In Section \ref{ssection:numexp-hoelder} we will investigate the situation considered by Lemma \ref{lem:stab} in which no information on the trace of $u$ on $\Sigma$ is available, i.e.\ case (a) in Theorem \ref{thm:error-estimate-fully-discrete}. In Section \ref{ssection:numexp-trace} we will then assume to be in possession of a finite dimension space on $\Sigma$ in which the trace is contained which corresponds to case (b) in the theorem. 

\subsection{Conditional H\"older stability}\label{ssection:numexp-hoelder}

We perform numerical experiments for the geometrical setup introduced in Section~\ref{ssection:stab-hoelder}. 
We consider spatial dimension $d=2$ and choose the values for the parameters as follows: 
\begin{align*}
	R = 1,  \quad r = 3/4, \quad \beta = 0.5, \quad \rho = \rho_0 +  \frac{(\rho_1 - \rho_0)}{10}, \quad  \epsilon = 5 \times 10^{-2}. 
\end{align*}
We compute up to 
\[
T = \sqrt{\frac{\rho_1 - \rho}{1-\epsilon}} \approx 0.843.
\]
Let us mention that the asymptotic convergence rates appear to be insensitive to the specific choice of the parameters as 
long as one respects the rules specified in Section~\ref{ssection:stab-hoelder} concerning the relationship 
betwen these parameters (see e.g.\ formula for $T$ above). 

\subsubsection{Clean data}\label{sssection:numexp-hoelder-clean}
\begin{figure}
\centering
\includegraphics[width=\textwidth]{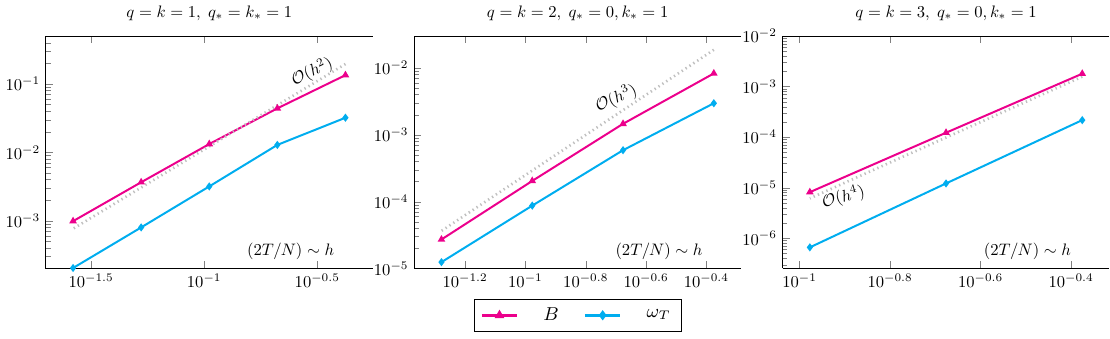}
\caption{The plots show the error $\norm{ u - \underline{u}_1 }_{L^2(D)}$ for $D \in \{  B, \omega_T  \} $ under $h$-refinement.}
\label{Cylinder-Hoelder-conv}
\end{figure}

We pick clean data $\tilde{u}_{\omega} = u|_{\STdata}$ from an exact solution $u$ of the wave equation given by
\begin{equation}\label{eq:refsol-smooth}
u = 5 \cos\left(\sqrt{2} \frac{\pi}{2} t \right) \cos\left( \frac{\pi}{2} x \right)  \cos\left( \frac{ \pi}{2}  y \right).
\end{equation}
The convergence in the set $B$ as defined in \eqref{eq:B-def-hoelder} for different polynomial degrees is investigated in Figure \ref{Cylinder-Hoelder-conv}. It turns out that the error in $B$ converges as well as the error in the data set, that is with the optimal rate $\mathcal{O}(h^{k+1})$ (for $k=q$) of the best approximation. 
Comparing this to the rate $\mathcal{O}( h^{\alpha k} )$ guaranteed by Theorem \ref{thm:error-estimate-fully-discrete} (a), we observe in particular that $\alpha$ appears to be equal to one in this case. However, as we will see later in Section \ref{sssection:numexp-hoelder-noise} this seems to be an artifact of having chosen clean data.  
\begin{figure}
\centering
\includegraphics[width=\textwidth]{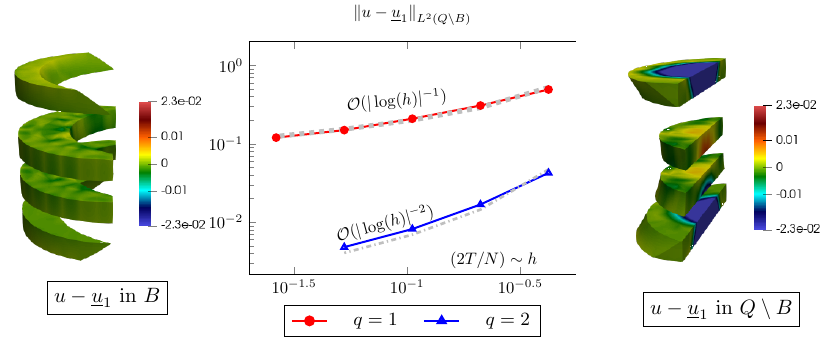}
	\caption{Plots of the difference $u - \underline{u}_1$ in $B$ and its complement $Q\setminus B$. The central plot studies the convergence in $Q\setminus B$. Here $k=q$. }
\label{Cylinder-Hoelder-conv-log}
\end{figure}

\begin{figure}
\centering
	\includegraphics[width=\textwidth]{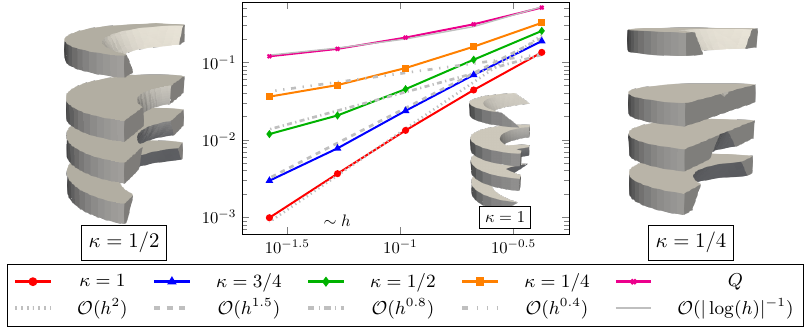}
	\caption{Convergence of the error $\norm{ u - \underline{u}_1 }_{L^2(B_{\kappa})}$ in the sets $B_{\kappa} = \mho(\kappa \rho)$ for $\kappa \in \{1,3/4,1/2,1/4 \}$ using $q=k=q_{\ast}=k_{\ast} = 1$. The figures on the left and right show the space-time domains $\mho(\kappa \rho)$ for $\kappa = 1/2$, respectively $\kappa = 1/4$.   }
\label{fig:Cylinder-Hoelder-stretch}
\end{figure}
In any case, the convergence for clean data in the set $B$ is very satisfactory. Clearly, it is also interesting to investigate the situation in the complement $\STdom \setminus B$. Note that this region is not covered by the theoretical results presented in Section \ref{section:stab}. Figure \ref{Cylinder-Hoelder-conv-log} shows that a logarithmic convergence is observed in $\STdom \setminus B$ which suggests that a corresponding logarithmic stability may hold up to the boundary $\Sigma$. Figure \ref{fig:Cylinder-Hoelder-stretch} investigates what happens in between these two extremes. To this end, let us define for $0 < \kappa \leq 1$ the regions  
\[
B_{\kappa} = \mho(\kappa \rho)
\]
with $\mho$ as defined in \eqref{eq:mho-def}. Note that $B_{1} = B$ and that the regions $B_{\kappa}$ for $\kappa < 1$ represent blown-up versions of the region $B$ which approach the boundary $\Sigma$ as $\kappa$ goes to zero. We refer to Figure \ref{fig:Cylinder-Hoelder-stretch} for sketches of $B_{\kappa}$ for $\kappa \in  \{1,1/2,1/4 \}$. The plot of the $L^2$-error displayed in this figure shows that there is a continuous decline of the convergence rate from $\mathcal{O}(h^2)$ to $\mathcal{O}(\abs{\log(h)}^{-1})$ as $\kappa$ approaches zero. 

\subsubsection{Noisy data}\label{sssection:numexp-hoelder-noise}
Let us keep the solution $u$ from \eqref{eq:refsol-smooth} and perturb the data $\tilde{u}_{\omega} = u|_{\STdata} + \delta$ with some noise $\delta$. We first test a perturbation coming from the smooth function $u \cos(2\pi x)$ projected to $\FullyDiscrSpace{k}{q} $, i.e.\ we set $\delta \sim \delta u^s$ with 
\begin{equation}\label{eq:smooth-noise}
\delta u^s := \Pi_h ( u \cos(2\pi x))
\end{equation}
with the proportionality factor chosen such that $\norm{ \delta u }_{L^2(\STdata) } \sim h^{\theta}$ for $\theta \in [1,2]$. According to Theorem \ref{thm:error-estimate-fully-discrete} a behavior of the form  $\norm{ u - \underline{u}_1 }_{L^2(B)} \lesssim h^{\alpha  }  \norm{u}_{ H^{2}(\STdom)  } +  h^{ \theta - (1-\alpha) }$ is expected for $q=k=1$.
However, what we actually observe in Figure \ref{fig:Cylinder-Hoelder-noise} is a convergence of the form $\mathcal{O}(h^{ \theta})$ which suggest that $\alpha = 1$ (and that the first factor converges as $h^{2}  \norm{u}_{ H^{2}(\STdom)  }$ as already observed before). 
\begin{figure}
\centering
\includegraphics[width=\textwidth]{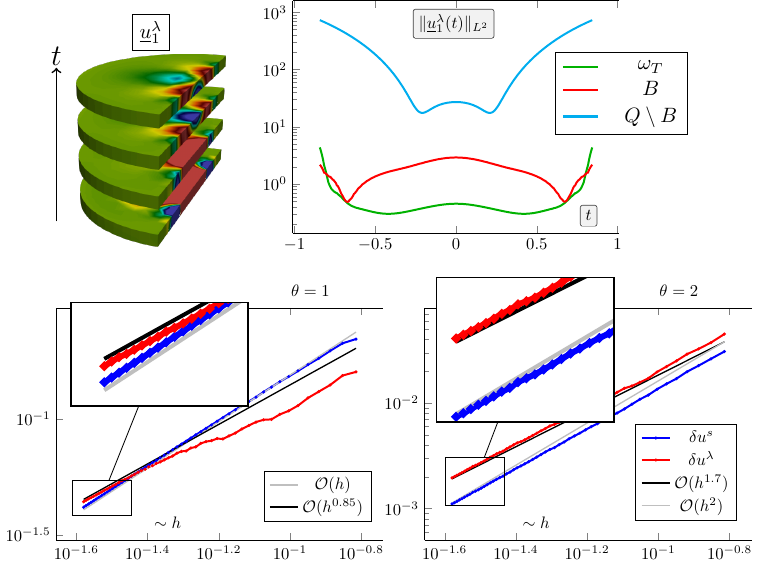}
\caption{Upper panel: Distribution of the mass of the mode $\underline{u_1}^{\lambda}$ over time. 
Lower panel: Convergence of the error $\norm{ u - \underline{u}_1 }_{L^2(B)}$ for different types of noise $\delta u \in \{ \delta u^s, \delta u^{\lambda} \}$ defined in \eqref{eq:smooth-noise}, respectively \eqref{eq:bad-mode-noise}.
The noise is scaled such that $\norm{ \delta u }_{L^2(\STdata) } \sim h^{\theta}$ for $\theta \in [1,2]$. }
\label{fig:Cylinder-Hoelder-noise}
\end{figure}
In order to make visible that the stability in $B$ is indeed reduced, and that in fact $\alpha < 1$ holds, we require a very specific kind of noise. Intuitively, if the observability in $\STdom \setminus B$ indeed fails, there should exist a solution of the wave equation which is exponentially small in the data set $\STdata$ yet very large outside. Adding this mode as noise in $\STdata$ should then trigger a large perturbation in $B$ and make the actual degree of ill-posedness visible. To find such a mode, we search for the minimal eigenvalue $\lambda $ of the problem 
\[
\begin{bmatrix}
\mathbf{S} &  \mathbf{A} \\
\mathbf{A}^{T} & \mathbf{S}^{\ast}
\end{bmatrix}
\begin{bmatrix}
   \underline{\mathbf{U}}_h^{\lambda} \\
\underline{\mathbf{Z}}_h^{\lambda}
\end{bmatrix}
=
\lambda
\begin{bmatrix}
\mathbf{M}_u &   0\\
 0  &  \mathbf{M}_z
\end{bmatrix}
\begin{bmatrix}
	\underline{\mathbf{U}}_h^{\lambda} \\
\underline{\mathbf{Z}}_h^{\lambda}
\end{bmatrix},
\]
where the matrix on the left corresponds to the bilinear form\footnote{For ease of notation we have included the data fitting term into the stabilization $\mathbf{S}$.} \eqref{eq:complete_bfi_def_fully_discr} and $\mathbf{M}_u$ and $\mathbf{M}_z$ are the mass matrices for primal and dual variable respectively. Note that for small $\lambda$ the corresponding mode will (approximately) solve the wave equation and will also be very small in $\STdata$ due to the data fidelity term. The generalized eigenvalue problem is solved iteratively using a variant of the Arnoldi method implemented in the software \texttt{ARPACK} \cite{ARPACK98}.
For  $N=64$ and $s=q=k=1$ one finds $\lambda \approx 6 \cdot 10^{-4}$. 
The corresponding mode $\underline{u_1}^{\lambda}$, extracted from $\underline{\mathbf{U}}_h^{\lambda} = ( \underline{u_1}^{\lambda},\underline{u_2}^{\lambda} )$, is visualized in Figure \ref{fig:Cylinder-Hoelder-noise}. The corresponding plot of $\norm{ \underline{u_1}^{\lambda}(t) }_{U}$ for $U \in \{ \STdata, B , \STdom \setminus B \} $ shows that this mode is strongly concentrated towards the axis of the cylinder as desired. 
Imposing this mode as noise, i.e.\ setting
\begin{equation}\label{eq:bad-mode-noise}
\delta u^{\lambda} := \Pi_h ( \underline{u_1}^{\lambda}) 
\end{equation}
we indeed observe a reduced convergence in $B$ as the red line in the lower panel of Figure \ref{fig:Cylinder-Hoelder-noise} shows. The numerical results suggest that $\alpha < 0.85$. We only give an upper bound here on the stability constant as it is possible that other types of noise could yield yet lower values. In any case, this investigation indicates that the stability in $B$ appears to be indeed of H\"older-type, contrary to the experiments with clean data that suggested that full Lipschitz stability in $B$ would hold true.

\subsection{Finite dimensional trace}\label{ssection:numexp-trace}
\begin{figure}
\centering
\includegraphics[width=\textwidth]{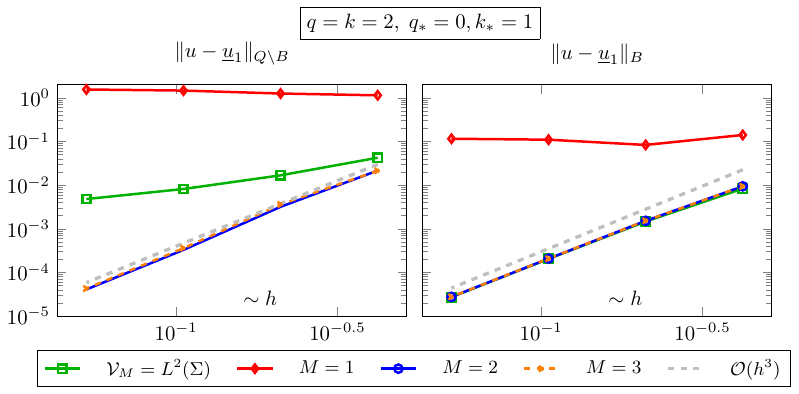}
\caption{The error $\norm{ u - \underline{u}_1 }_{L^2(U)}$ for $U \in \{ Q \setminus B, B  \} $ for different dimensions $M$ of the space  $\mathcal{V}_M := \Span \left\{ \varphi_m \mid m \in \{1, \ldots, M \} \right\}$ where the exact solution is given by $u = 5 \varphi_2$.  }
\label{fig:Cylinder-trace}
\end{figure}
Keeping the geometry from the previous section, let us now assume that we know a finite dimensional space in which the trace of the solution on $\Sigma$ is contained. For $M \in \mathbb{N}$ this space will be given by  
\begin{equation}\label{eq:V_M_Fourier}
\FiniteDimSpace_M := \Span \left\{ \varphi_m \mid m \in \{1, \ldots, M \} \right\} 
\end{equation}
for the family of solutions of the wave equation given by
\[
\varphi_m = \cos\left(\sqrt{2} \frac{m \pi}{4} t \right) \cos\left( \frac{m \pi}{4} x \right)  \cos\left( \frac{m \pi}{4}  y \right). 
\]
Let us fix the exact solution $5 \varphi_{2}$ whose trace on $\Sigma$ is obviously contained in $\FiniteDimSpace_M$ for $M \geq 2$. Notice that the stabilization term\footnote{or its modified version described in Section \ref{ssection:impl-trace-stab}} $\StabTrace{ \cdot }{ \cdot }$ now becomes active to impose that $(\underline{u}_1)|_{\Sigma}$ takes values in $\FiniteDimSpace_M$. 
Figure \ref{fig:Cylinder-trace} display the convergence rates in $B$ and in $\STdom \setminus B$ obtained for different dimensions $M$ of the space $\FiniteDimSpace_M$. If $M \geq 2$, 
i.e.\ as soon as $u|_{\Sigma} \in \FiniteDimSpace_M$, we obtain the same convergence in 
$\STdom \setminus B$ as in $B$. This confirms the theoretical result established in Theorem \ref{thm:error-estimate-fully-discrete} (b). While full Lipschitz stability is recovered when being in possession of the correct finite dimensional space, Figure \ref{fig:Cylinder-trace} also shows the devastating effect of picking the wrong space. If $M=1$ an incorrect boundary condition is imposed on $\Sigma$ which spoils convergence in $\STdom \setminus B$ as well as in $B$. 
\begin{figure}
\centering
\includegraphics[width=\textwidth]{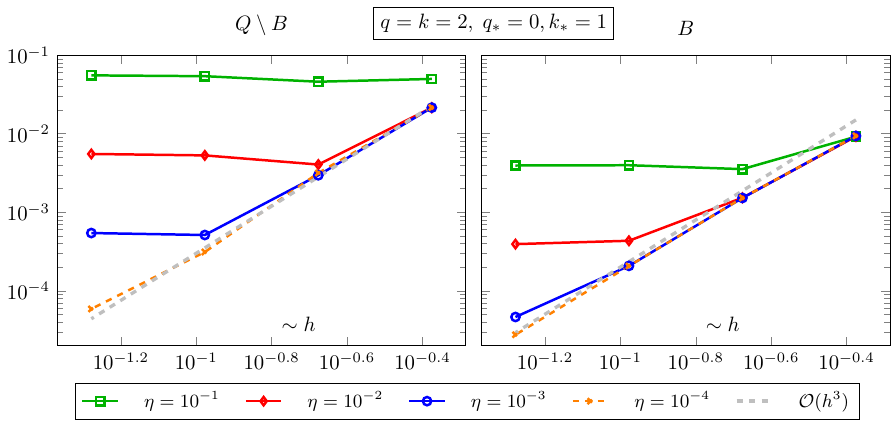}
\caption{ The error $\norm{ u_{\eta} - \underline{u}_1 }_{L^2(U)}$ for $U \in \{ Q \setminus B, B  \} $ obtained using the space $\FiniteDimSpace_2$ for the exact solution \eqref{eq:perturb-sol-eta} depending on $\eta$. }
\label{fig:Cylinder-Trace-msol3-approx}
\end{figure}

In practice, the space $\FiniteDimSpace_M$ is typically obtained from data, e.g. by model order reduction techniques, in which case it can usually not be guaranteed that the trace of the solution is contained exactly in this space. A more realistic assumption is that it can be represented only up to a certain accuracy. To emulate this situation, we consider the exact solution 
\begin{equation}\label{eq:perturb-sol-eta}
u_{\eta} = 5 \cdot \varphi_2 + \eta \varphi_3
\end{equation}
for some $\eta > 0$. We will assume to be in possession of the space $\FiniteDimSpace_M$ 
with $M =2$ in which $u_{\eta}$ can only be represented up to a certain error controlled by $\eta$. Figure \ref{fig:Cylinder-Trace-msol3-approx} demonstrates that in this situation optimal convergence is obtained everywhere up to the point where the perturbation caused by $\eta$ prevents the error from decreasing any further.

As a final experiment let us study the dependence of the error on the dimension of the space $\FiniteDimSpace_M$. Clearly one expects that choosing $\FiniteDimSpace_M$ too large should have adverse effects. For if this was not the case one could always recover Lipschitz stability everywhere by enriching with a sufficiently large space which would then somewhat be in disarray with our previous numerical results. For our investigations let us define the solution $u_M = 5 \varphi_M$. 
The smallest space in which the trace of the solution is contained is clearly 
\begin{equation}\label{eq:V-fin-dim-opt}
\FiniteDimSpace_M^{\mathrm{opt}} := \Span \left\{ \varphi_M  \right\}. 
\end{equation}
\begin{figure}
\centering
\includegraphics[width=.45\textwidth]{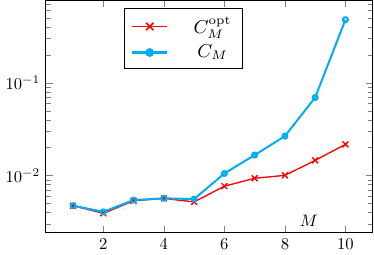}
\caption{Dependence of the quantities $C_M^{\mathrm{opt}}$ and $C_M$ 
defined in \eqref{eq:C-trace-def} on $M$.}
\label{fig:finite-dim-trace-constant} 
\end{figure}
Let $\underline{u}_1^{M,\mathrm{opt} }$ be the numerical solution obtained based on the one-dimensional space \eqref{eq:V-fin-dim-opt} while $\underline{u}_1^{M}$ denotes the solution based on the $M$-dimensional space from \eqref{eq:V_M_Fourier}. Figure \ref{fig:finite-dim-trace-constant} displays how the quantities defined by, 
\begin{equation}\label{eq:C-trace-def}
C_M^{\mathrm{opt}} :=  \frac{\norm{ u_{M} - \underline{u}_1^{M,\mathrm{opt} }  }_{L^2( \STdom )}}{  h^3 \norm{ u_{M} }_{H^{3}( \STdom)} },
\quad 
C_M :=  \frac{\norm{ u_{M} - \underline{u}_1^{M}  }_{L^2( \STdom )}}{  h^3 \norm{ u_{M} }_{H^{3}( \STdom)} }
\end{equation}
for $h = T/8$ and a fixed discretization using $q=k=2$, depend on $M$. Apparently, the quantity $C_M$ grows much faster than $C_M^{\mathrm{opt}}$ as $M$ increases. 
This means that even though we will obtain the optimal convergence 
\[
\norm{ u_{M} - \underline{u}_1^{M}  }_{L^2( \STdom)} = C_M h^3 \norm{ u_{M} }_{H^{3}( \STdom)} 
\]
based on the space $\FiniteDimSpace_M$ as $h$ goes to zero, the obtained accuracy will be very poor for $M > 9$ due to the exponential growth of the constant $C_M$. This result is plausible as it shows that the ill-posedness of the problem can only be contained by having accurate additional information on the solution available.





\bibliographystyle{elsarticle-num} 
\bibliography{master}

\begin{thebibliography}{10}
\expandafter\ifx\csname url\endcsname\relax
  \def\url#1{\texttt{#1}}\fi
\expandafter\ifx\csname urlprefix\endcsname\relax\def\urlprefix{URL }\fi
\expandafter\ifx\csname href\endcsname\relax
  \def\href#1#2{#2} \def\path#1{#1}\fi

\bibitem{BFMO}
E.~Burman, A.~Feizmohammadi, A.~M{\"u}nch, L.~Oksanen, Spacetime finite element
  methods for control problems subject to the wave equation, ESAIM, Control
  Optim. Calc. Var. 29 (2023) 40, id/No 41.
\newblock \href {https://doi.org/https://doi.org/10.1051/cocv/2023028}
  {\path{doi:https://doi.org/10.1051/cocv/2023028}}.

\bibitem{CM15}
N.~C{\^\i}ndea, A.~M{\"u}nch, Inverse problems for linear hyperbolic equations
  using mixed formulations, Inverse Problems 31~(7) (2015) 075001.
\newblock \href {https://doi.org/https://doi.org/10.1088/0266-5611/31/7/075001}
  {\path{doi:https://doi.org/10.1088/0266-5611/31/7/075001}}.

\bibitem{BFO20}
E.~Burman, A.~Feizmohammadi, L.~Oksanen, A finite element data assimilation
  method for the wave equation, Math. Comp. 89~(324) (2020) 1681--1709.
\newblock \href {https://doi.org/https://doi.org/10.1090/mcom/3508}
  {\path{doi:https://doi.org/10.1090/mcom/3508}}.

\bibitem{BFMO21}
E.~Burman, A.~Feizmohammadi, A.~M\"unch, L.~Oksanen, Space time stabilized
  finite element methods for a unique continuation problem subject to the wave
  equation, ESAIM Math. Model. Numer. Anal. 55 (2021) S969--S991.
\newblock \href {https://doi.org/https://doi.org/10.1051/m2an/2020062}
  {\path{doi:https://doi.org/10.1051/m2an/2020062}}.

\bibitem{MM21}
S.~Mishra, R.~Molinaro, {Estimates on the generalization error of
  physics-informed neural networks for approximating a class of inverse
  problems for {PDE}s}, IMA J. Numer. Anal. 42~(2) (2021) 981--1022.
\newblock \href {https://doi.org/https://doi.org/10.1093/imanum/drab032}
  {\path{doi:https://doi.org/10.1093/imanum/drab032}}.

\bibitem{DMS23}
W.~Dahmen, H.~Monsuur, R.~Stevenson, Least squares solvers for ill-posed pdes
  that are conditionally stable, ESAIM Math. Model. Numer. Anal. 57~(4) (2023)
  2227--2255.
\newblock \href {https://doi.org/https://doi.org/10.1051/m2an/2023050}
  {\path{doi:https://doi.org/10.1051/m2an/2023050}}.

\bibitem{BP24_wave}
E.~Burman, J.~Preuss, Unique continuation for the wave equation based on a
  discontinuous {G}alerkin time discretization, IMA J. Numer. Anal. (05 2025).
\newblock \href {https://doi.org/https://doi.org/10.1093/imanum/draf036}
  {\path{doi:https://doi.org/10.1093/imanum/draf036}}.

\bibitem{BDE25}
E.~Burman, G.~Delay, A.~Ern, The unique continuation problem for the wave
  equation discretized with a high-order space-time nonconforming method,
  Numer. Math. 157~(4) (2025) 1259–1284.
\newblock \href {https://doi.org/https://doi.org/10.1007/s00211-025-01479-2}
  {\path{doi:https://doi.org/10.1007/s00211-025-01479-2}}.

\bibitem{RUss71}
D.~L. Russell, Boundary value control theory of the higher-dimensional wave
  equation. {II}, SIAM J. Control 9 (1971) 401--419.
\newblock \href {https://doi.org/https://doi.org/10.1137/0309030}
  {\path{doi:https://doi.org/10.1137/0309030}}.

\bibitem{Lions88}
J.-L. Lions, Contr\^{o}labilit\'{e} exacte, perturbations et stabilisation de
  syst\`emes distribu\'{e}s. {T}ome 2: Perturbations, Vol.~9 of Recherches en
  Math\'{e}matiques Appliqu\'{e}es, Masson, Paris, 1988.

\bibitem{Robb95}
L.~Robbiano, Fonction de co\^{u}t et contr\^{o}le des solutions des
  \'{e}quations hyperboliques, Asymptotic Anal. 10~(2) (1995) 95--115.
\newblock \href {https://doi.org/https://doi.org/10.3233/ASY-1995-10201}
  {\path{doi:https://doi.org/10.3233/ASY-1995-10201}}.

\bibitem{FI96}
A.~V. Fursikov, O.~Y. Imanuvilov, Controllability of evolution equations,
  Vol.~34 of Lecture Notes Series, Seoul National University, Research
  Institute of Mathematics, Global Analysis Research Center, Seoul, 1996.

\bibitem{BG97}
N.~Burq, P.~G\'{e}rard, Condition n\'{e}cessaire et suffisante pour la
  contr\^{o}labilit\'{e} exacte des ondes, C. R. Acad. Sci. Paris S\'{e}r. I
  Math. 325~(7) (1997) 749--752.
\newblock \href {https://doi.org/https://doi.org/10.1016/S0764-4442(97)80053-5}
  {\path{doi:https://doi.org/10.1016/S0764-4442(97)80053-5}}.

\bibitem{MIll02}
L.~Miller, Escape function conditions for the observation, control, and
  stabilization of the wave equation, SIAM J. Control Optim. 41~(5) (2002)
  1554--1566.
\newblock \href {https://doi.org/https://doi.org/10.1137/S036301290139107X}
  {\path{doi:https://doi.org/10.1137/S036301290139107X}}.

\bibitem{RLTT17}
J.~Le~Rousseau, G.~Lebeau, P.~Terpolilli, E.~Tr\'{e}lat, Geometric control
  condition for the wave equation with a time-dependent observation domain,
  Anal. PDE 10~(4) (2017) 983--1015.
\newblock \href {https://doi.org/https://doi.org/10.2140/apde.2017.10.983}
  {\path{doi:https://doi.org/10.2140/apde.2017.10.983}}.

\bibitem{DEZ25}
B.~Dehman, S.~Ervedoza, E.~Zuazua,
  \href{https://arxiv.org/abs/2504.18976}{Regional and partial observability
  and control of waves} (2025).
\newblock \href {http://arxiv.org/abs/2504.18976} {\path{arXiv:2504.18976}}.
\newline\urlprefix\url{https://arxiv.org/abs/2504.18976}

\bibitem{BLR88}
C.~Bardos, G.~Lebeau, J.~Rauch, Un exemple d'utilisation des notions de
  propagation pour le contr\^{o}le et la stabilisation de probl\`emes
  hyperboliques, Rend. Sem. Mat. Univ. Politec. Torino~(Special Issue) (1988)
  11--31 (1989), nonlinear hyperbolic equations in applied sciences.

\bibitem{hormander1985}
L.~H{\"o}rmander, The analysis of linear partial differential operators. {IV}:
  {Fourier} integral operators, Vol. 275 of Grundlehren Math. Wiss., Springer,
  Cham, 1985.

\bibitem{isakov1998}
V.~Isakov, Inverse problems for partial differential equations, Vol. 127 of
  Appl. Math. Sci., New York, NY: Springer, 1998.

\bibitem{hoop2018}
M.~V. de~Hoop, P.~Kepley, L.~Oksanen, An exact redatuming procedure for the
  inverse boundary value problem for the wave equation, SIAM J. Appl. Math.
  78~(1) (2018) 171--192.
\newblock \href {https://doi.org/https://doi.org/10.1137/16M1106729}
  {\path{doi:https://doi.org/10.1137/16M1106729}}.

\bibitem{stefanov2019}
P.~Stefanov, Conditionally stable unique continuation and applications to
  thermoacoustic tomography, Math. Eng. (Springfield) 1~(4) (2019) 789--799.
\newblock \href {https://doi.org/https://doi.org/10.3934/mine.2019.4.789}
  {\path{doi:https://doi.org/10.3934/mine.2019.4.789}}.

\bibitem{burman2019}
E.~Burman, M.~Nechita, L.~Oksanen, Unique continuation for the {Helmholtz}
  equation using stabilized finite element methods, J. Math. Pures Appl. (9)
  129 (2019) 1--22.
\newblock \href {https://doi.org/https://doi.org/10.1016/j.matpur.2018.10.003}
  {\path{doi:https://doi.org/10.1016/j.matpur.2018.10.003}}.

\bibitem{zworski2012}
M.~Zworski, Semiclassical analysis, Vol. 138 of Grad. Stud. Math., Providence,
  RI: American Mathematical Society (AMS), 2012.

\bibitem{dos-santos-ferreira2009}
D.~{Dos Santos Ferreira}, C.~E. {Kenig}, M.~{Salo}, G.~{Uhlmann}, {Limiting
  Carleman weights and anisotropic inverse problems}, {Invent. Math.} 178~(1)
  (2009) 119--171.
\newblock \href {https://doi.org/https://doi.org/10.1007/s00222-009-0196-4}
  {\path{doi:https://doi.org/10.1007/s00222-009-0196-4}}.

\bibitem{le-rousseau2012}
J.~{Le Rousseau}, G.~{Lebeau}, {On Carleman estimates for elliptic and
  parabolic operators. Applications to unique continuation and control of
  parabolic equations}, {ESAIM, Control Optim. Calc. Var.} 18~(3) (2012)
  712--747.
\newblock \href {https://doi.org/https://doi.org/10.1051/cocv/2011168}
  {\path{doi:https://doi.org/10.1051/cocv/2011168}}.

\bibitem{BLR}
C.~Bardos, G.~Lebeau, J.~Rauch, Sharp sufficient conditions for the
  observation, control, and stabilization of waves from the boundary, SIAM J.
  Control Optim. 30~(5) (1992) 1024--1065.
\newblock \href {https://doi.org/https://doi.org/10.1137/0330055}
  {\path{doi:https://doi.org/10.1137/0330055}}.

\bibitem{JS97}
J.~Sch\"{o}berl, {NETGEN} {A}n advancing front {2D/3D}-mesh generator based on
  abstract rules, Comput. Vis. Sci. 1~(1) (1997) 41--52.
\newblock \href {https://doi.org/https://doi.org/10.1007/s007910050004}
  {\path{doi:https://doi.org/10.1007/s007910050004}}.

\bibitem{JS14}
J.~Sch\"{o}berl, \href{http://hdl.handle.net/20.500.12708/28346}{C++11
  implementation of finite elements in {NGS}olve}, Tech. rep., ASC-2014-30,
  Institute for Analysis and Scientific Computing (September 2014).
\newline\urlprefix\url{http://hdl.handle.net/20.500.12708/28346}

\bibitem{ngsxfem2021}
C.~Lehrenfeld, F.~Heimann, J.~Preuß, H.~von Wahl, \texttt{ngsxfem}: Add-on to
  {NGSolve} for geometrically unfitted finite element discretizations, J. Open
  Source Softw. 6~(64) (2021) 3237.
\newblock \href {https://doi.org/https://doi.org/10.21105/joss.03237}
  {\path{doi:https://doi.org/10.21105/joss.03237}}.

\bibitem{repro-BOPZ25}
J.~Preuss, Reproduction material: Unique continuation for the wave equation:
  the stability landscape (Oct. 2025).
\newblock \href {https://doi.org/https://doi.org/10.5281/zenodo.17370507}
  {\path{doi:https://doi.org/10.5281/zenodo.17370507}}.

\bibitem{ARPACK98}
ARPACK Users' Guide: Solution of Large-Scale Eigenvalue Problems with
  Implicitly Restarted Arnoldi Methods, 1998, pp. 43--66.
\newblock \href {https://doi.org/https://doi.org/10.1137/1.9780898719628.ch4}
  {\path{doi:https://doi.org/10.1137/1.9780898719628.ch4}}.

\end{thebibliography}

\end{document}